\newcommand{\pto}{\rightharpoonup}
\newcommand{\F}{\mathbf{F}}
\renewcommand{\C}{\mathcal{C}}
\newtheorem*{obsv}{Observation}
\renewcommand{\rho}{\varrho}
\newcommand{\qto}{\rightsquigarrow}
\newcommand{\tpo}{\trianglelefteq}
\renewcommand{\F}{\mathbb{F}}
\begin{document}

\title{Generalized Indiscernibles as Model-complete Theories}
\author{
Cameron Donnay Hill\footnote{Correspondence to:   University of Notre Dame,
Department of Mathematics,
255 Hurley, Notre Dame, IN 46556. Email: \texttt{cameron.hill.136@nd.edu}}}
\maketitle

\begin{abstract}
We give an almost entirely model-theoretic account of both Ramsey classes of finite structures and of generalized indiscernibles as studied in special cases in (for example) \cite{kim-kim-scow-2012}, \cite{scow-2011}. We understand ``theories of indiscernibles'' to be special kinds of companionable theories of finite structures, and much of the work in our arguments is carried in the context of the model-companion. Among other things, this approach allows us to prove that the companion of a theory of indiscernibles whose ``base'' consists of the quantifier-free formulas is necessarily the theory of the \fraisse limit of a \fraisse class of linearly ordered finite structures (where the linear order will be at least quantifier-free definable). We also provide streamlined arguments for the result of \cite{kpt-2005} identifying extremely amenable groups with the automorphism groups of limits of Ramsey classes.
\end{abstract}

\section*{Introduction}

We give an almost entirely model-theoretic account of both Ramsey classes of finite structures and of generalized indiscernibles as studied in special cases in (for example) \cite{kim-kim-scow-2012}, \cite{scow-2011}. We take ``theories of indiscernibles'' to be special kinds of companionable Robinson theories $T_0$ in the sense of \cite{hru-1997} that also have a certain unconstrained modeling property (UMP), which formalizes the intuitive notion, ``what is needed to run a compactness argument that generates $T_0$-indiscernibles in a model.'' We show that this unconstrained modeling property is equivalent to a structural Ramsey property in the class of finite models of $T_0$, generalizing certain facts proved in \cite{kim-kim-scow-2012}, \cite{scow-2011}.

This approach allows us to prove, among other things, that the companion of a theory of indiscernibles whose ``base'' consists of the quantifier-free formulas is necessarily the theory of the \fraisse limit of a \fraisse class of linearly ordered finite structures (recapturing a result that was also proved, more or less, in \cite{kpt-2005} by different means). Our model-theoretic framework also allows us to restate and streamline the arguments of \cite{kpt-2005} -- demonstrating the equivalence of the Ramsey property and the \emph{extreme amenability} of the automorphism group of the \fraisse limit -- in more clearly model-theoretic terms; in particular, for the sufficiency of extreme amenability, we really only to consider the action of the automorphism group on certain type spaces (Stone spaces).

In this article, we present only a few simple examples to go with the technology -- leaving  applications to a forthcoming companion paper. In that paper, taking \cite{scow-2011} as inspiration, we mainly consider characterizations of classes of theories in the classification-theoretic hierarchy (NIP theories, NSOP theories, and so on) in terms of ``collapsing'' theories of indiscernibles to reduces.

\section{Definitions}

In this section, we collect almost all of the important definitions around ``theories of indiscernibles.'' The main concepts are Robinson theories and their model-companions, the Ramsey property and the modeling property for Robinson theories. In the next section, we establish how these ideas fit together. 

\subsection{Robinson theories}

\begin{defn}
Let $\L_0$ be some countable relational language. (Throughout, ``no function symbols and only finitely many constant symbols'' would also do.)
\begin{enumerate}
\item Wherever it appears, $\Delta$ denotes a set of $\L_0$-formulas which contains all of the quantifier-free $\L_0$-formulas, is closed under boolean combinations, under taking subformulas, and under substitutions of free variables. Under these conditions, $\Delta$ is called a \emph{base}.

When $\Delta$ is given, we define
$$\Sigma = \left\{\exists \xx\,\phi:\phi\in\Delta\right\},\,\,\Pi = \left\{\forall \xx\,\phi:\phi\in\Delta\right\}$$
Note that $\Delta$ may contain sentences.

\item For $\L_0$-structures $\A,\B$, a map $f:A\to B$ is a $\Delta$-embedding if for all $\aa\in A^n$ and $\phi(x_0,...,x_{n-1})\in\Delta$, $\A\models\phi(\aa)\iff\B\models\phi(f\aa)$.

\item An $\L_0$-theory $T_0$ is $\Delta$-universal if $T_0\subseteq\Pi$.
\end{enumerate}
\end{defn}

\begin{defn}[Robinson theories]
Let $\L_0$ be a countable relational language, and let $\Delta\subseteq\L_0$ be a base. We specify a \emph{Robinson theory over $\Delta$} to be a $\Delta$-universal theory $T_0$ such that:
\begin{enumerate}
\item For any $\B\models T_0$ and any $A_0\subseteq_\fin B$, there is a finite $\Delta$-submodel $\A\preceq_\Delta\B$ such that $A_0\subseteq A$.

\item For every $0<n<\omega$, the set $S^\Delta_n(T_0)$ of $\Delta$-complete $n$-types that are consistent with $T_0$ is finite.
\item (JEP) For all $\A_1,\A_2\models T_0$, there are $\B\models T_0$ and $\Delta$-embeddings $\A_i\to\B$, $i=1,2$.
\item (AP) For all $\A_0,\A_1,\A_2\models T_0$ and $\Delta$-embeddings $f_i:\A_0\to\A_i$, $i=1,2$, there are $\B\models T_0$ and $\Delta$-embeddings $g_i:\A_i\to\B$, $i=1,2$, such that $g_1f_1 = g_2f_2$. 
\end{enumerate}
A model $\A\models T_0$ will be called \emph{existentially closed} (e.c.) if for any $\Delta$-embedding $f:\A\to\B$, any $\phi(x_0,...,x_{n-1})\in\Sigma$, and any $\aa\in A^n$, $$\A\models\phi(\aa)\iff\B\models\phi(f\aa)$$
Define
$$T_0^* = \bigcap\left\{Th(\A):\A\models T_0\textrm{ is e.c.}\right\},$$
If $T_0^*$ is precisely the theory of e.c. models of $T_0$, then $T_0^*$ is called a $\Delta$-model-companion of $T_0$ (provided it is also a companion of $T_0$ with respect to $\Delta$-embeddings).  In fact, we can axiomatize $T_0^*$ more explicitly when it exists. For a finite model $\B\models T_0$, $B_0\subsetneq B$, and an enumeration $\bb_0\bb$ of $B$ with $B_0$ as an initial segment, let $\theta_{\B/B_0}(\xx,\yy)$ be the complete $\Delta$-type of $\bb_0\bb$, and let $\phi_{B/B_0}$ be the sentence $\forall \xx\exists\yy\theta_{\B/B_0}(\xx,\yy)$; let $\mathrm{Ext}(T_0)$ be the set of all such sentences. We will see now that a Robinson theory $T_0$ always has a $\Delta$-model-compantion.

\begin{obs}
Let $\A\models T_0$. Then $\A$ is e.c. if and only if $\A\models T_0\cup \mathrm{Ext}(T_0)$.
\end{obs}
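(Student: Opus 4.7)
My plan proceeds in both directions of the biconditional, which have rather different character.

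For the easy direction ($\Leftarrow$), I assume $\A\models T_0\cup\mathrm{Ext}(T_0)$ and take a $\Delta$-embedding $f\colon\A\to\B$ with $\B\models T_0$, a $\Sigma$-formula $\phi(\xx)=\exists\yy\,\psi(\xx,\yy)$ with $\psi\in\Delta$, and $\aa\in A^n$. The implication $\A\models\phi(\aa)\Rightarrow\B\models\phi(f\aa)$ is immediate from the $\Delta$-preservation of $f$. For the converse, suppose $\B\models\psi(f\aa,\bb)$ with $\bb\in B^{|\yy|}$. By condition (1) I extend $f\aa\cup\bb$ to a finite $\Delta$-submodel $\B_0\preceq_\Delta\B$, so $\B_0\models T_0$ (by $\Delta$-universality) and $\B_0\models\psi(f\aa,\bb)$ (by $\preceq_\Delta$). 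Enumerate $B_0$ as $\bb'_0\bb'_1$ with $\bb'_0=f\aa$. When $\bb'_1$ is nonempty, the sentence $\phi_{\B_0/\bb'_0}\in\mathrm{Ext}(T_0)$ holds in $\A$. Because $\aa$ has the same complete $\Delta$-type in $\A$ as $\bb'_0$ has in $\B_0$ (using that $f$ is a $\Delta$-embedding and $\B_0\preceq_\Delta\B$), I may plug in $\aa$ to obtain $\aa_1\in A^{|\bb'_1|}$ with $\A\models\theta_{\B_0/\bb'_0}(\aa,\aa_1)$; and $\bb$ is the value of some substitution $\tau$ applied to the coordinates of $\bb'_0\bb'_1$, so closure of $\Delta$ under substitutions of free variables puts $\psi(\bb'_0,\tau(\bb'_0,\bb'_1))$ into $\theta_{\B_0/\bb'_0}$ and therefore gives $\A\models\psi(\aa,\tau(\aa,\aa_1))$. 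The degenerate case $\bb'_1=\emptyset$ is immediate: $\psi(\xx,\tau(\xx))\in\Delta$ and $f$ is a $\Delta$-embedding.

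For the hard direction ($\Rightarrow$), I assume $\A$ is e.c.\ and take $\phi_{\B/B_0}\in\mathrm{Ext}(T_0)$ together with $\aa\in A^{|\bb_0|}$ realizing the $\Delta$-type of $\bb_0$. Since $\exists\yy\,\theta_{\B/B_0}(\xx,\yy)$ is a $\Sigma$-formula, the e.c.\ property reduces my task to producing a $T_0$-model $\A'$ and a $\Delta$-embedding $\iota\colon\A\hookrightarrow\A'$ for which $\A'\models\exists\yy\,\theta_{\B/B_0}(\iota\aa,\yy)$. I construct $\A'$ by amalgamating $\A$ with $\B$ under the identification $\aa\equiv\bb_0$: take a finite $\Delta$-submodel $\A_0\preceq_\Delta\A$ containing $\aa$ (condition (1)), promote the matching complete $\Delta$-types of $\aa$ in $\A_0$ and $\bb_0$ in $\B$ to a common $T_0$-model $\C_0$ that $\Delta$-embeds into both $\A_0$ and $\B$ sending a single distinguished tuple to $\aa$ and $\bb_0$ respectively, and apply AP twice---first over $\C_0$ to amalgamate $\A_0$ and $\B$, then over $\A_0$ to glue in the rest of $\A$. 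The image in $\A'$ of $\bb$ from the $\B$-factor witnesses $\exists\yy\,\theta_{\B/B_0}$ over $\iota\aa$; e.c.\ then reflects this back to $\A$.

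The main obstacle is the promotion step in the hard direction: the stated AP is formulated over $\Delta$-embeddings of $T_0$-models, whereas what I need is amalgamation over tuples that merely share a complete $\Delta$-type. Justifying this uses the finiteness of $S^\Delta_n(T_0)$ (condition (2))---each complete $\Delta$-type is isolated by a single $\Delta$-formula---together with condition (1) to materialize the common ``type'' as an honest finite $T_0$-submodel. An alternative presentation would run the argument by compactness, showing that $T_0\cup D_\Delta(\A)\cup\theta_{\B/B_0}(\aa,\bar d)$ is consistent (for new constants $\bar d$) by reducing each finite fragment to an amalgamation of a finite $\Delta$-submodel of $\A$ with $\B$ over the matching tuples; but the consistency of these finite instances is again exactly the type-matching amalgamation problem.
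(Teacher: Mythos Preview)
The paper states this as an Observation with no accompanying proof, so there is nothing in the paper to compare your argument against directly; your treatment is already more detailed than what the paper offers.

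Your $\Leftarrow$ direction is correct and carefully done. One small remark: as literally written in the paper, $\phi_{\B/B_0}=\forall\xx\exists\yy\,\theta_{\B/B_0}(\xx,\yy)$ would fail at any tuple $\aa$ not realizing $\tp_\Delta(\bb_0)$, since $\theta_{\B/B_0}$ isolates the complete $\Delta$-type and hence entails $\tp_\Delta(\bb_0)(\xx)$. You implicitly read the axiom conditionally (requiring a witness only when $\aa$ realizes the right type), which is surely what is intended.

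For the $\Rightarrow$ direction you have correctly isolated the crux: one must amalgamate $\A$ (or a finite $\Delta$-submodel of it) with $\B$ identifying $\aa$ with $\bb_0$, but AP as stated applies only over a common $T_0$-model, not over tuples that merely share a complete $\Delta$-type. Your proposed resolution---using condition~(1) and the finiteness of $S^\Delta_n(T_0)$ to ``materialize'' a common finite $T_0$-submodel $\C_0$---works cleanly when $\Delta$ is the set of quantifier-free formulas: then the range of $\aa$ (equivalently of $\bb_0$) is already a substructure, hence a $\Delta$-submodel, and a model of $T_0$ since $T_0$ is genuinely universal; AP applies directly with $\C_0$ equal to this range. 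In the general case, however, the range of $\aa$ need not be a $\Delta$-submodel, and extending it via condition~(1) to some finite $\A^-\preceq_\Delta\A$ gives no reason to expect a $\Delta$-embedding of $\A^-$ into $\B$ carrying $\aa$ to $\bb_0$. Your sketch does not explain how conditions~(1) and~(2) close this gap, and it is not routine.

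It is worth noting that the paper glosses over exactly the same point in the proof sketch of the subsequent Proposition: the consistency of $K(\A)$ is asserted to follow ``from AP and JEP,'' which again requires amalgamation over matching $\Delta$-types rather than over a common $T_0$-submodel. So your identification of the obstacle is apt; the paper simply does not address it, and your honest flagging of it is appropriate.
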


\begin{prop}
For every model $\A\models T_0$, there is an e.c. model $\A^*\models T_0$ such that $\A\preceq_\Delta\A^*$.
\end{prop}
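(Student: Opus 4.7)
The plan is to realize $\A^*$ as a model of the $\L_0(A)$-theory $\Gamma := T_0 \cup \mathrm{Ext}(T_0) \cup \mathrm{Diag}_\Delta(\A)$: by the preceding Observation, any such $\A^*$ satisfies $\A \preceq_\Delta \A^*$ (via the constants of the $\Delta$-diagram) and is e.c. So the entire problem reduces to showing that $\Gamma$ is consistent.

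I proceed by compactness. Let $\Phi \subseteq \mathrm{Ext}(T_0)$ and $\Psi \subseteq \mathrm{Diag}_\Delta(\A)$ be finite. The finitely many parameters of $\Psi$ sit inside some finite $\Delta$-submodel $\A_0 \preceq_\Delta \A$ by clause (1) of the definition of a Robinson theory, so it suffices to produce $\C \models T_0 \cup \Phi$ admitting a $\Delta$-embedding $\A_0 \hookrightarrow \C$. I build $\C$ as the union of a chain of finite models of $T_0$, $\A_0 = \C_0 \preceq_\Delta \C_1 \preceq_\Delta \cdots$, arranged so that at stage $n+1$ every pending extension demand from stage $n$ is resolved: for every $\phi_{\B/B_0} \in \Phi$ and every tuple $\aa_0 \in C_n$ whose $\Delta$-type agrees with that of $\bb_0$ in $\B$, the next stage contains a tuple $\aa$ with $\aa_0\aa$ realizing $\theta_{\B/B_0}$. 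Clause (2) (finiteness of $S^\Delta_k(T_0)$) keeps each stage manageable; since each $\C_n$ is finite and $\Phi$ is finite, only finitely many demands arise per stage. The union $\C$ is a model of $T_0$ (since $T_0 \subseteq \Pi$ and $\Delta$-embeddings preserve $\Pi$-sentences up chains) and of every $\phi_{\B/B_0} \in \Phi$, and contains $\A_0$ as a $\Delta$-submodel, as required.

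The heart --- and the main obstacle --- is the single-step amalgamation: given $\aa_0 \in C_n$ realizing the $\Delta$-type of $\bb_0$ in $\B$, produce $\C_{n+1} \succeq_\Delta \C_n$ containing a tuple $\aa$ such that $\aa_0\aa$ has the full $\Delta$-type $\theta_{\B/B_0}$. AP as formulated requires a common $\Delta$-submodel of $\C_n$ and $\B$ containing the matched $\aa_0 = \bb_0$, but what is available is only matching $\Delta$-types. My plan to bridge this gap is to extract a finite $\Delta$-submodel $\mathcal{F} \preceq_\Delta \C_n$ containing $\aa_0$ via (1), apply JEP to $\mathcal{F}$ and $\B$ to obtain a joint $\Delta$-extension $\mathfrak{D} \models T_0$, and then use AP iteratively inside $\mathfrak{D}$, together with (1) applied to the union of the two embedded images, to identify the two realizations of the shared $\Delta$-type coming from $\mathcal{F}$ and $\B$. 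This identification step is the technically delicate part of the proof; the rest is bookkeeping.
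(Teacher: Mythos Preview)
Your overall architecture differs from the paper's. The paper does not try to realize $T_0 \cup \mathrm{Ext}(T_0) \cup \mathrm{diag}_\Delta(\A)$ in a single compactness argument. Instead it shows the weaker theory
$K(\A) = T_0 \cup \mathrm{diag}_\Delta(\A) \cup \{\exists\yy\,\theta_{\B/B_0}(\bb_0,\yy):\bb_0\in A^{|B_0|},\ \B\models T_0\text{ finite}\}$
is consistent, passes to a model $\A'\succeq_\Delta\A$, and then iterates the operator $\A\mapsto\A'$ along an $\omega$-chain whose union is e.c. Your route --- compactness to a finite $\Phi\subseteq\mathrm{Ext}(T_0)$, then an internal $\omega$-chain of finite models satisfying $\Phi$ --- is a legitimate reorganization, and both arguments reduce to exactly the same single-step: given $\aa_0\in C_n$ and a finite $\B\models T_0$ with $\tp_\Delta^{\C_n}(\aa_0)=\tp_\Delta^\B(\bb_0)$, produce a $\Delta$-extension of $\C_n$ realizing $\theta_{\B/B_0}(\aa_0,\yy)$. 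The paper's sketch simply asserts this follows from AP and JEP (``one finds that $K(\A)$ has a model'').

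Your proposed resolution of this step, however, does not work as written. Applying JEP to $\mathcal{F}$ and $\B$ merely places two realizations of the same $\Delta$-type side by side in some $\mathfrak{D}$; AP gives no mechanism for then \emph{identifying} them, and iterating AP does not change this --- AP amalgamates over a common $\Delta$-embedded $\A_0\models T_0$, not over a pair of tuples with matching type. What is actually needed is to exhibit such a base directly: the $\L_0$-substructure on the range of $\aa_0$, together with the map $\aa_0\mapsto\bb_0$. Since $\Delta$ contains all quantifier-free formulas and $\L_0$ is relational, this map is an $\L_0$-isomorphism onto the substructure on $\bb_0$; once that substructure is a $\Delta$-submodel of $\C_n$ (hence a model of $T_0$, as $T_0\subseteq\Pi$), the matching $\Delta$-types make both inclusions $\Delta$-embeddings and AP applies immediately --- no JEP detour is needed. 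For the quantifier-free base this is automatic; for bases $\Delta$ containing quantified formulas one must invoke condition~(1) more carefully, and on that point the paper's sketch is equally silent.
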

\begin{proof}[Proof (Sketch)]
For a model $\A\models T_0$, let $K(\A)$ be the following set of sentences (in the expansion of $\L_0$ in which every element of $A$ is named as a new constant symbol):
$$T_0\cup \diag_\Delta(\A)\cup\left\{\exists \yy\,\theta_{\B/B_0}(\bb_0,\yy):\B\models T_0\textnormal{ finite}, B_0\subsetneq B\right\}$$
where the $\Delta$-formula $\theta_{\B/B_0}$ is just as defined above. From the assumptions that the class of models of $T_0$ has AP and JEP with respect to $\Delta$-embeddings, one finds that $K(\A)$ has a model, say $\hat \A$. Clearly, $\A\preceq_\Delta\hat\A\r\L_0$, and $\hat\A\r\L_0\models\exists \yy\,\theta_{\B/B_0}(\bb_0,\yy)$ whenever $\bb_0\in A^{|B_0|}$, $\B\models T_0$ is finite, and $\tp_\Delta^\B(\bb_0)=\tp_\Delta^\A(\bb_0)$. 

Now, as usual, we think of $\A\mapsto \A':=\hat\A\r\L_0$ as an operator on models of $T_0$. We construct a chain of models
$$\A_0\preceq_\Delta\A_1\preceq_\Delta\cdots\preceq_\Delta\A_n\preceq_\Delta\cdots$$
(the chain being of order-type $\omega$) by taking $\A_0=\A$ and, for $n<\omega$, $\A_{n+1} = \A_n'$. From the assumptions that $\Delta$ is closed under boolean combinations and under taking subformulas, it is not hard to see that $\A_m\preceq_\Delta\A^*$ for all $m<\omega$ and $\A^*\models T_0$, where $\A^*=\bigcup_n\A_n$. Moreover, $\A^*\models T_0\cup\mathrm{Ext}(T_0)$, so $\A^*$ is existentially closed.
\end{proof}


We also note that the class of models of $T_0$ is a concrete category with $\Delta$-embeddings for morphisms. When this understanding is in play, and when $\A,\B$ are models of $T_0$, we write $\hom(\A,\B)$ for the set of $\Delta$-embeddings $\A\to\B$. We may write $\hom_\Delta(\A,\B)$ if there is some risk of ambiguity.
\end{defn}

We state a few satisfying facts about Robinson theories in our sense. Their proofs are quite routine, so we omit them.

\begin{fact}
Assume $T_0$ is a Robinson theory. For every finite model $\A\models T_0$, there is a sentence $\phi^\A\in\Sigma$ such that for all $\B\models T_0$, $\B\models\phi^\A$ if and only if there is a $\Delta$-embedding $\A\to\B$.
\end{fact}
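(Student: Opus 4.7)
The plan is to take $\phi^{\A}$ to be the existential closure of a single $\Delta$-formula that isolates the complete $\Delta$-type of an enumeration of $\A$. So let $\A\models T_0$ be finite, write $A = \{a_0,\ldots,a_{n-1}\}$, and let $p(x_0,\ldots,x_{n-1}) \in S^\Delta_n(T_0)$ be the $\Delta$-type realized by $(a_0,\ldots,a_{n-1})$. By condition (2) in the definition of a Robinson theory, $S^\Delta_n(T_0)$ is finite; for each $q\in S^\Delta_n(T_0)$ distinct from $p$, completeness and closure of $\Delta$ under negation yield a formula $\psi_q\in p\setminus q$; then, since $\Delta$ is closed under boolean combinations, $\theta_\A := \bigwedge_{q\neq p}\psi_q$ is a single $\Delta$-formula isolating $p$. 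Set $\phi^\A := \exists x_0\cdots\exists x_{n-1}\,\theta_\A(x_0,\ldots,x_{n-1})\in\Sigma$.

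The forward direction ($\Delta$-embedding implies $\phi^\A$) is immediate: if $f:\A\to\B$ is a $\Delta$-embedding, then $(fa_0,\ldots,fa_{n-1})$ realizes $\theta_\A$ in $\B$, so $\B\models\phi^\A$.

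For the converse, suppose $\B\models T_0$ with witnesses $(b_0,\ldots,b_{n-1})$ satisfying $\theta_\A$, and define $f:A\to B$ by $a_i\mapsto b_i$. I would then verify that $f$ is a well-defined injection and a $\Delta$-embedding, using two instances of the closure properties of $\Delta$. First, because equality formulas $x_i = x_j$ are quantifier-free and therefore in $\Delta$, the complete type $\theta_\A$ decides them correctly, which forces $b_i=b_j$ exactly when $a_i=a_j$; hence $f$ is well-defined and injective. Second, given any formula $\phi(y_0,\ldots,y_{k-1})\in\Delta$ and any tuple $(a_{i_0},\ldots,a_{i_{k-1}})$ from $A$, closure of $\Delta$ under substitution of free variables yields $\phi(x_{i_0},\ldots,x_{i_{k-1}})\in\Delta$, and completeness of $p$ then gives $\A\models\phi(a_{i_0},\ldots,a_{i_{k-1}})\iff\theta_\A\vdash\phi(x_{i_0},\ldots,x_{i_{k-1}})\iff\B\models\phi(b_{i_0},\ldots,b_{i_{k-1}})$. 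Thus $f$ is a $\Delta$-embedding, finishing the equivalence.

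There is no real obstacle here; the only subtle point — and worth flagging — is the passage from the type $p$ to a single isolating $\Delta$-formula, which is exactly what the finiteness assumption on $S^\Delta_n(T_0)$ and closure under boolean combinations are built to provide. Everything else is bookkeeping enabled by the substitution-closure of $\Delta$ and the presence of equality formulas.
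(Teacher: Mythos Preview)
Your proof is correct and is precisely the routine argument the paper has in mind; indeed, the paper omits this proof entirely (``Their proofs are quite routine, so we omit them''), but elsewhere it freely treats complete $\Delta$-types as single $\Delta$-formulas (e.g.\ in defining $\theta_{\B/B_0}$ for $\mathrm{Ext}(T_0)$), which is exactly the isolation step you carry out. There is nothing to add.
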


\begin{fact}
Let $T_0$ be a Robinson theory over $\Delta\subseteq\L_0$. For every formula $\phi(\xx)\in\L_0$, there are $\psi_\phi(\xx)\in\Sigma$ and $\psi'_\phi(\xx)\in\Pi$ such that
$$T_0^*\models\forall\xx\left(\phi(\xx)\bic\psi_\phi(\xx)\bic\psi'_\phi(\xx)\right)$$
\end{fact}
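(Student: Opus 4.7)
The plan is to mimic the classical proof that the model-companion of a theory is model-complete and admits existential quantifier elimination, adapted to the $\Delta$-based setting. I will produce $\psi_\phi\in\Sigma$; then $\psi'_\phi$ can be taken as $\neg\psi_{\neg\phi}$, which lies in $\Pi$ since by boolean closure of $\Delta$ the negation of any $\Sigma$-formula is equivalent to a $\Pi$-formula.

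The preservation fact I will exploit is that every $\Delta$-embedding $f:\A\to\B$ between e.c.\ models of $T_0$ preserves $\Sigma$-formulas (and dually, $\Pi$-formulas) in both directions: upward because $\Delta$-embeddings preserve $\Delta$-formulas and lift existential witnesses, downward by the very definition of $\A$ being e.c. Starting from this, a Lindstr\"om-style elementary chain argument will establish that $T_0^*$ is \emph{model-complete} with respect to $\Delta$-embeddings (i.e., any $\Delta$-embedding between models of $T_0^*$ is fully elementary). Given $\A\preceq_\Delta\B$ both $\models T_0^*$, I would build a tower
\[
\A=\A_0\preceq_\Delta\B_0=\B\preceq_\Delta\A_1\preceq_\Delta\B_1\preceq_\Delta\cdots
\]
with $\A_n\preceq\A_{n+1}$ and $\B_n\preceq\B_{n+1}$ elementary in $\L_0$; at each stage, two-sided $\Sigma$-preservation makes the elementary diagram of $\A_n$ consistent with $\diag_\Delta(\B_n)$, so compactness supplies $\A_{n+1}$. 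The union is then a common elementary extension of $\A$ and $\B$, whence $\A\preceq\B$.

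With model-completeness in hand, I extract $\psi_\phi$ by a standard compactness argument. Set $\Xi(\xx)=\{\sigma\in\Sigma:T_0^*\models\forall\xx(\phi\to\sigma)\}$; by closure of $\Sigma$ under finite conjunction, it suffices to show $T_0^*\cup\Xi(\xx)\models\phi(\xx)$. Given $(\A,\aa)\models T_0^*\cup\Xi(\aa)$, the definition of $\Xi$ makes $T_0^*\cup\tp_\Sigma^\A(\aa)\cup\{\phi(\xx)\}$ consistent; take a realization $(\B,\bb)$. Using Axiom~(1) I find a finite $\Delta$-submodel $\A_0\preceq_\Delta\A$ containing $\aa$; the $\Sigma$-formula encoding the $\Delta$-type of $\A_0$ over $\aa$ lies in $\tp_\Sigma^\A(\aa)\subseteq\tp_\Sigma^\B(\bb)$, so $\B$ contains a $\Delta$-isomorphic copy of $\A_0$ over $\aa\mapsto\bb$, giving a $\Delta$-embedding $h:\A_0\to\B$ with $h(\aa)=\bb$. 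Applying AP to $\A_0\preceq_\Delta\A$ and $\A_0\xrightarrow{h}\B$, I amalgamate into some $\C\models T_0$, pass to its e.c.\ hull $\C^*\models T_0^*$, and obtain $\Delta$-embeddings $\A,\B\hookrightarrow\C^*$ with aligned tuples. Model-completeness upgrades these to elementary embeddings, whence $\A\models\phi(\aa)\iff\C^*\models\phi(\aa)\iff\B\models\phi(\bb)$, and since $\B\models\phi(\bb)$ we conclude $\A\models\phi(\aa)$.

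The main technical hurdle is the Lindstr\"om chain construction for model-completeness: interleaving $\L_0$-elementary extensions with $\Delta$-embeddings while staying inside $T_0^*$ takes some bookkeeping, although each stage reduces to a single compactness step enabled by the preservation fact. The rest is routine compactness and formula manipulation.
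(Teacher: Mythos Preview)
The paper omits the proof as routine, so there is nothing to compare against; your overall strategy---first establish $\Delta$-model-completeness of $T_0^*$ via a Lindstr\"om-style elementary chain, then run a separation/compactness argument---is standard and correct.

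There is, however, a genuine slip in your Step~2. From $(\A,\aa)\models T_0^*\cup\Xi(\aa)$ you assert that $T_0^*\cup\tp_\Sigma^\A(\aa)\cup\{\phi(\xx)\}$ is consistent. Unwinding the contrapositive: if this set is inconsistent, compactness yields $\tau:=\sigma_1\wedge\cdots\wedge\sigma_k\in\Sigma$ with $\A\models\tau(\aa)$ and $T_0^*\models\phi\to\neg\tau$. But $\neg\tau\in\Pi$, not $\Sigma$, so you cannot place $\neg\tau$ in $\Xi$ and derive a contradiction with $\A\models\Xi(\aa)$. (Asserting $\neg\tau$ is $T_0^*$-equivalent to a $\Sigma$-formula is exactly what you are trying to prove.)

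The fix is to reverse the direction. Keep $\Xi$ as defined, but show instead that
\[
T_0^*\cup\{\neg\sigma(\xx):\sigma\in\Sigma,\ \A\not\models\sigma(\aa)\}\cup\{\phi(\xx)\}
\]
is consistent. If not, compactness gives $T_0^*\models\phi\to\sigma_1\vee\cdots\vee\sigma_k$ with each $\sigma_i\in\Sigma$ and $\A\not\models\sigma_i(\aa)$; since $\Sigma$ is closed under finite disjunction, $\sigma:=\bigvee_i\sigma_i\in\Xi$ while $\A\not\models\sigma(\aa)$, contradicting $\A\models\Xi(\aa)$. A realization $(\B,\bb)$ then satisfies $\tp_\Sigma^\B(\bb)\subseteq\tp_\Sigma^\A(\aa)$ (the inclusion is reversed from what you wrote). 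Accordingly, take the finite $\Delta$-submodel $\B_0\preceq_\Delta\B$ containing $\bb$; its $\Sigma$-description over $\bb$ lies in $\tp_\Sigma^\B(\bb)\subseteq\tp_\Sigma^\A(\aa)$, giving a $\Delta$-embedding $\B_0\to\A$ with $\bb\mapsto\aa$. Your AP/e.c.-hull/model-completeness conclusion then goes through verbatim with $\A$ and $\B$ interchanged.
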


\begin{fact}
Let $T_0$ be a Robinson theory over $\Delta\subseteq\L_0$. For any model $\A\models T_0^*$, $\diag_\Delta(\A)\cup T_0^*$ is consistent.
\end{fact}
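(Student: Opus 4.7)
The approach I would take is essentially to exhibit $\A$ itself as a witness. I would expand $\A$ to an $\L_0(A)$-structure $\A^+$ in which each new constant symbol $c_a$ is interpreted as the element $a \in A$. Since $T_0^*$ is a set of $\L_0$-sentences and $\A \models T_0^*$ by hypothesis, expanding the language by new constants preserves the truth of every sentence of $T_0^*$, so $\A^+ \models T_0^*$. Meanwhile, for any sentence $\phi(c_{a_0},\ldots,c_{a_{n-1}}) \in \diag_\Delta(\A)$, the defining condition is exactly $\A \models \phi(a_0,\ldots,a_{n-1})$, which under the chosen interpretation of the $c_{a_i}$ means $\A^+ \models \phi(c_{a_0},\ldots,c_{a_{n-1}})$. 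Hence $\A^+ \models \diag_\Delta(\A) \cup T_0^*$, and so the union is satisfiable, hence consistent.

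The fact is almost tautological once the definitions are unpacked, so no step should present a genuine obstacle. The only concern one might raise is whether the hypothesis ``$\A \models T_0^*$'' is doing real work: a priori, $T_0^*$ is defined as the intersection $\bigcap\{Th(\A) : \A \models T_0 \text{ is e.c.}\}$ of complete theories, so one could worry that models of this intersection might be strictly more numerous than the e.c. models. But this worry does not affect the argument above, which uses only that $\A$ itself satisfies $T_0^*$ and that truth is preserved under constant-expansion of the language; e.c.-ness of $\A$ is never invoked. (In any case, the preceding Observation together with the Proposition shows that $T_0^*$ is axiomatized by $T_0 \cup \mathrm{Ext}(T_0)$, so models of $T_0^*$ are in fact exactly the e.c. models of $T_0$.) Presumably the fact is recorded for citation in later compactness-style constructions, where one wants the joint satisfiability of $T_0^*$ with the $\Delta$-diagram of an arbitrary given model of $T_0^*$ without reproving the trivial observation each time.
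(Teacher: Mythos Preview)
Your argument is correct and is exactly the routine verification one would expect; the paper itself omits the proof of this fact entirely, stating that the proofs of these facts ``are quite routine, so we omit them.'' There is nothing to compare against, and your one-line observation that $\A$, expanded by constants naming its own elements, already models $\diag_\Delta(\A)\cup T_0^*$ is the intended content.
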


Our last fact is an immediate consequence of the fact that the class of models of $T_0$ has JEP and AP with respect to $\Delta$-embeddings.

\begin{fact}
Let $T_0$ be a Robinson theory over $\Delta\subseteq\L_0$. Then $T_0^*$ is the $\Delta$-model-{\em completion} of $T_0$, and it is a complete theory.
\end{fact}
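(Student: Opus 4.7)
The plan is to use the preceding fact (the $\Sigma/\Pi$ normal form modulo $T_0^*$) as the workhorse, together with the Proposition on e.c.\ extensions; from there both statements fall out with essentially the same argument, using JEP for the first and AP for the second.

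For completeness of $T_0^*$: given $\A_1, \A_2 \models T_0^*$ and an $\L_0$-sentence $\sigma$, I first replace $\sigma$ by an equivalent $\Sigma$-sentence $\sigma'$ using the normal-form fact. Applying JEP to $\A_1, \A_2$ regarded as models of $T_0$, I obtain $\C_0 \models T_0$ with $\Delta$-embeddings $g_i : \A_i \to \C_0$; the Proposition then extends $\C_0$ to an e.c.\ model $\C \models T_0^*$ with $\C_0 \preceq_\Delta \C$. Each $\A_i$ is itself e.c.\ (by the Observation, since $T_0^*$ contains $T_0 \cup \mathrm{Ext}(T_0)$), so the defining biconditional of ``e.c.'', applied to the empty tuple and $\sigma' \in \Sigma$, gives $\A_i \models \sigma' \iff \C \models \sigma'$; hence $\A_1 \equiv \A_2$.

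For $T_0^*$ being the $\Delta$-model-completion of $T_0$: fix $\A \models T_0$ and two models $\B_1, \B_2 \models T_0^* \cup \diag_\Delta(\A)$, with canonical $\Delta$-embeddings $\iota_i : \A \to \B_i$ witnessing the diagram. Apply AP to $(\A, \B_1, \B_2)$ with $\iota_1, \iota_2$ to obtain $\C_0 \models T_0$ and $\Delta$-embeddings $g_i : \B_i \to \C_0$ with $g_1 \iota_1 = g_2 \iota_2$, and extend to e.c.\ $\C \models T_0^*$. For any formula $\phi(\xx)$ and any tuple $\aa$ from $A$, replace $\phi$ by an equivalent $\phi' \in \Sigma$ and apply the e.c.\ biconditional in each $\B_i$: $\B_i \models \phi'(\iota_i \aa) \iff \C \models \phi'(g_i \iota_i \aa)$. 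The identity $g_1 \iota_1 \aa = g_2 \iota_2 \aa$ forces the right-hand sides to agree, so $\B_1 \equiv_\A \B_2$.

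There is no real obstacle here: the heavy lifting was done upstream by the $\Sigma/\Pi$ normal-form fact and by the e.c.\ extension Proposition. The subtlety to double-check is that the e.c.\ definition gives \emph{both} directions of the $\Sigma$-biconditional along a $\Delta$-embedding out of an e.c.\ structure, not merely upward preservation; this two-sided preservation is exactly what lets me pull the truth value of $\sigma'$ (resp.\ $\phi'(\iota_i \aa)$) back from $\C$ into each $\A_i$ (resp.\ $\B_i$).
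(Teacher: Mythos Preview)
Your proposal is correct and aligns with the paper's approach: the paper omits the proof entirely, remarking only that the fact is ``an immediate consequence of the fact that the class of models of $T_0$ has JEP and AP with respect to $\Delta$-embeddings.'' Your argument unpacks exactly this---JEP for completeness, AP for model-completion---using the preceding $\Sigma/\Pi$ normal-form fact and the e.c.\ extension Proposition as the glue, which is precisely the routine verification the paper leaves to the reader. (A minor efficiency: since each $\A_i$, $\B_i$ is already e.c., you do not actually need to pass from $\C_0$ to an e.c.\ $\C$; the $\Sigma$-biconditional along the $\Delta$-embedding into $\C_0$ already suffices.)
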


\subsection{Modeling and the Ramsey property}

We now establish the two properties that could reasonably make a Robinson theory into a theory of indiscernibles. The first of these -- the Ramsey property -- is essentially a combinatorial partition property of the class of finite models of a Robinson theory $T_0$. The second -- the modeling property -- is intended to be just what is needed in order to use the Compactness Theorem to generate an indiscernible model of $T_0^*$, which we call an indiscernible $T_0$-picture, inside a model of a given $\L$-theory $T$. First, we need to fix a relatively weak notion of embedding that is more appropriate for accomodating the Ramsey property. (We will also see later in the article that a certain stronger Ramsey property with ``standard'' $\Delta$-embeddings is not possible.)

\begin{defn}
Suppose $\A$ and $\B$ are $\L_0$-structures. An $\ell$-embedding $f:\A\qto\B$ consists of a system $(\A_X,f_X)_{X\subset_\fin A}$ where for each $X\subset_\fin A$, $\A_X$ is a finite $\Delta$-sub-model of $\A\models T_0$ such that $X\subseteq A_X$, and $f_X:\A_X\to \B$ is a $\Delta$-embedding; we also require that there is a (usually unmentioned) function $q:\omega\to\omega$ such that $|A_X|\leq q(|X|)$ for all $X\subset_\fin A$. (The $\ell$ stands for ``\underline{$\ell$}ocalized embedding.'')

Now, suppose $f:\A\qto\B$, $g:\B\qto\C$ are $\ell$-embeddings. We define an $\ell$-embedding $g{\circ}f:\A\qto\C$ as follows: For each $X\subset_\fin A$, we set $A^{g{\circ}f}_X=A_X$ and $(g{\circ}f)_X=g_{f_X[A_X]}\circ f_X$.
\end{defn}

\begin{defn}[Ramsey property]
Let $T_0$ be a Robinson theory over $\Delta$ in $\L_0$. We say that $T_0$ has the Ramsey property if the following holds:
\begin{quote}
Let $\B\models T_0^*$, and let $A$ be a finite model of $T_0$. For some $k<\omega$, let $h:\hom(A,\B)\to k$ be a finite coloring. Then there is an $\ell$-embedding $f:\B\qto\B$ such that $h$ is constant on $\hom(A,f_X[B_X])$ for every $X\subset_\fin B$.
\end{quote}
\end{defn}


In order to state the modeling property, we must first specify what we mean by a $T_0$-picture. It is also essential to provide some notion of EM-type for models of $T_0^*$; for this we define the notion of $T_0$-template. The forms of our definitions here are loosely inspired by the notion of coherent sequences in \cite{hyttinen-2000}.
\begin{defn}
Let $T_0$ be a Robinson theory over $\Delta$ in $\L_0$. Let $\L$ be some language, and let $\M$ be an $\L$-structure.
\begin{itemize}
\item A {\em $T_0$-picture in $\M$} is a pair $(\A,g)$ where $\A\models T_0$ and $g:A\to M$ is a one-to-one function. 

\item Let $(\A,g)$ be a $T_0$-picture in $\M$, and let $C\subset M$. We say that $(\A,g)$ is $C$-indiscernible if for all $n<\omega$ and all $\aa,\aa'\in A^n$, 
$$\tp_\Delta^\A(\aa)=\tp_\Delta^\A(\aa')\,\,\implies\,\,\tp^\M(g\aa/C)= \tp^\M(g\aa'/C)$$

\item A {\em $T_0$-template in $\M$} is a pair $(\B,\Gamma)$ where $\B\models T_0$ and $\Gamma$ is a map $B^{<\omega}\to M^{<\omega}$ satisfying:
\begin{enumerate}
\item $(\forall n<\omega)(\forall \bb\in B^{n})\bigwedge\begin{cases}
\Gamma(\bb)\in M^n\\
\qtp_=^\B(\bb) = \qtp^\M_=(\Gamma(\bb))
\end{cases}$

\item $\Gamma$ respects permutations of coordinates: 

For any $(b_0,...,b_{n-1})\in B^n$ and $\sigma\in \emph{Sym}(n)$, if $\Gamma(\bb) = (c_0,...,c_{n-1})$, then $\Gamma(b_{\sigma(0)},...,b_{\sigma(n-1)}) = (c_{\sigma(0)},...,c_{\sigma(n-1)})$.

\item For all $\aa,\bb\in B^{<\omega}$,
$\tp^\M(\Gamma(\aa\widehat{\,\,}\bb)) = \tp^\M(\Gamma(\aa)\widehat{\,\,}\Gamma(\bb))$

\end{enumerate}
(Here, $\qtp_=$ indicates a quantifier-free-complete type in the language of equality.)

\item Let $(\A,g)$ be a $T_0$-picture in $\M$, and let $(\B,\Gamma)$ be a $T_0$-template in $\M_1$.  We say that {\em $(\A,g)$ is patterned on $(\B,\Gamma)$} if there is a family of patterning maps $f = (f^F)_{F\subset_\fin\L}$ -- meaning that for each $F\subset_\fin\L$, $f^F:\A\qto\B$ is an $\ell$-embedding  such that 
$$\tp^{\M_1}_F(g\aa) = \tp^\M_F(\Gamma(f^F_X\aa))$$
for all $n<\omega$,  $X\subset_\fin M$, and $\aa\in A^n_X$. 
\end{itemize}
\end{defn}

\begin{defn}[Modeling property]
Let $T_0$ be a Robinson theory over $\Delta$ in $\L_0$. Let $\L$ be some language, and let $T$ be a complete $\L$-theory. We say that $T_0$ has the modeling property with respect to $T$ if the following holds:
\begin{quote}
Suppose $(\A,\Gamma)$ is a $T_0$-pattern in $\M\models T$, where $\A\models T_0^*$. Then there are an $\M_1\models T$ and an indiscernible $T_0$-picture $(\A,g)$ in $\M_1$ patterned on $(\A,\Gamma)$.
\end{quote}
We say that $T_0$ has the {\em unconstrained modeling property} (UMP) if it has the modeling property with respect to every complete theory in any {\em countable} language.
\end{defn}

The terminology ``modeling property'' is, to the best of our knowledge, due to L. Scow.

\section{Results on modeling and the Ramsey property}

Now that we have established the relevant definitions, we will determine in this section just how they fit together. In the first subsection, we show that the Ramsey property and the unconstrained model property (together with ``fintie rigidity'') are the same thing. In much of the literature on structural Ramsey theory, only \fraisse classes of \emph{linearly ordered} finite structures are considered; to some extent, this appears to be just a convenient way of ensuring finite-rigidity. (In general, rigidity does not imply that there is a uniformly definable linear order in a class of finite structures.)  As we show in the succeeding subsection, this not in fact just a convenience -- any theory of indiscernibles with base $\Delta$ has a $\Delta$-definable linear order which is ``almost dense.''

\subsection{Basic equivalences}

The proof of the next lemma was suggested by L. Scow. 

\begin{lemma}\label{lemma:rp-to-frp}
Let $T_0$ be a Robinson theory over $\Delta$ in $\L_0$. If $T_0$ has the Ramsey property, then $T_0$ has the following {\em finitary Ramsey property}:
\begin{quote}
Let $\A,\B\models T_0$ be finite, and let $0<k<\omega$. Then there is a finite model $\C\models T_0$ such that for any coloring $h:\hom(\A,\C)\to k$, there is an $e_0\in\hom(\B,\C)$ such that $h(e_0f)=h(e_0g)$ for all $f,g\in\hom(\A,\B)$. (That is, $h$ is constant on $\hom(\A,e_0[\B])$.)
\end{quote}
\end{lemma}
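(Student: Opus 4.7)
The plan is to argue by contradiction via an ultrafilter lift. Suppose the finitary Ramsey property fails for some finite $\A,\B\models T_0$ and some $k<\omega$; then for every finite $\C\models T_0$ there is a \emph{bad} coloring $h_\C:\hom(\A,\C)\to k$, meaning that $h_\C$ is not constant on $\hom(\A,e[\B])$ for any $e\in\hom(\B,\C)$. The idea is to glue these bad colorings into a single coloring on a countable $\B^*\models T_0^*$, apply the (infinitary) Ramsey property to $\B^*$, and then descend the resulting witness back into one of the finite approximations of $\B^*$.

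Concretely, fix a countable $\B^*\models T_0^*$. By axiom (1) of a Robinson theory, we may write $\B^*=\bigcup_{n<\omega}\C_n$ as an increasing union of finite $\Delta$-submodels; by Fact 1 together with the completeness of $T_0^*$ (Fact 4), the finite model $\B$ embeds into $\B^*$. Fix a non-principal ultrafilter $\mathcal{U}$ on $\omega$. For each $e\in\hom(\A,\B^*)$ the set $\{n:e[A]\subseteq C_n\}$ is cofinite, and for such $n$ we have $e\in\hom(\A,\C_n)$ since $\C_n\preceq_\Delta\B^*$; so by the ultrafilter property there is a unique $c_e\in k$ with $\{n:h_{\C_n}(e)=c_e\}\in\mathcal{U}$, and setting $h(e)=c_e$ defines a coloring $h:\hom(\A,\B^*)\to k$.

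Now apply the Ramsey property to $\B^*,\A,h$ to obtain an $\ell$-embedding $f:\B^*\qto\B^*$ such that $h$ is constant on $\hom(\A,f_X[B^*_X])$ for every $X\subset_\fin B^*$. Choose any $\Delta$-embedding $j:\B\to\B^*$ and set $X=j[B]$. Then $\B^*_X$ is a finite $\Delta$-submodel of $\B^*$ containing $j[B]$, and $e_0:=f_X\circ j\in\hom(\B,\B^*)$ satisfies $e_0[\B]\subseteq f_X[B^*_X]$, so $h$ is constant on $\hom(\A,e_0[\B])$. Since $\hom(\A,e_0[\B])$ is finite, the set
\[
\{n:e_0[B]\subseteq C_n\}\cap\bigcap_{e\in\hom(\A,e_0[\B])}\{n:h_{\C_n}(e)=h(e)\}
\]
lies in $\mathcal{U}$ and in particular is non-empty; for any such $n$, we have $e_0\in\hom(\B,\C_n)$ and $h_{\C_n}$ is constant on $\hom(\A,e_0[\B])$, contradicting the badness of $h_{\C_n}$.

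I expect the main obstacle to be the bookkeeping that simultaneously produces a witness $e_0$ from the $\ell$-embedding $f$ and realizes $e_0[\B]$ inside one of the finite stages $\C_n$. This is exactly what the ultrafilter buys us, by synchronizing the finitely many values of $h_{\C_n}$ on $\hom(\A,e_0[\B])$ with their limits $h(e)$ on a $\mathcal{U}$-large (hence non-empty) collection of indices $n$.
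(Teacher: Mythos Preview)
Your proof is correct and shares the paper's overall strategy: assume failure, collect bad colorings on all finite $\C$, combine them via an ultrafilter into a single coloring on an e.c.\ model, apply the Ramsey property there, and push the resulting monochromatic copy of $\B$ back down to a finite stage for a contradiction.

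The execution differs, and yours is the more direct one. The paper encodes each bad coloring $h^\C$ into an expanded structure $(\C,h^\C)$ in a larger language, writes down a first-order sentence $\phi_{bad}$ expressing badness, takes an ultraproduct $\M_0$ of a chain of such expansions (so that $\M_0\models\phi_{bad}$), then passes to an e.c.\ extension $\M\succeq_\Delta\M_0$ and extends the coloring to $\M$ with a coherence condition allowing any finite piece to be mapped back into $\M_0$. You instead fix a countable $\B^*\models T_0^*$ from the outset, exhaust it by finite $\Delta$-submodels $\C_n$, and take the ultrafilter limit of the colorings $h_{\C_n}$ directly as a coloring of $\hom(\A,\B^*)$ --- no auxiliary language, no ultraproduct of structures, no separate extension step. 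The price is that your limit coloring $h$ is not itself ``bad'' in any first-order sense; but as your last paragraph shows, you don't need that: the finiteness of $\hom(\A,e_0[\B])$ lets the ultrafilter hand you a single index $n$ at which $h$ and $h_{\C_n}$ agree on all the relevant embeddings, which is enough for the contradiction.
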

\begin{proof} 
In this proof, it will be convenient to fix a listing $\D_0,\D_1,...,\D_n,...$ of all finite models of $T_0$ up to isomorphism. (This is possible because of the finiteness of $S_n^\Delta(T_0)$ for every $n<\omega$.)

Let $\A,\B\models T_0$ be finite models, and (w.l.o.g.) assume that $\hom(\A,\B)$ is nonempty. Let $0<k<\omega$, and towards a contradiction, suppose that for every finite $\C\models T_0$, there is a coloring $h^\C:\hom(\A,\C)\to k$ such that for every $e\in \hom(\B,\C)$, $h^\C$ is \emph{not} constant on $\hom(\A, e\B)$. Fix an order $<_A$ of $A$. For each finite $\C\models T_0$, we define an expanded structure $(\C,h_\C)$ as follows\footnote{The notation here is so preposterously awful, we couldn't resist: Cf. $\frac{\,\,\overline{\Xi}\,\,}{\Xi}$ for a complex number $\Xi\neq0$.}:
\begin{enumerate}
\item $(\C,h^\C)$ has an additional sort $X$ (in addition to the home-sort $X_0$), a new function symbol $h:X_0^n\to X$, and new constant symbols $c_*,c_0,...,c_{k-1}$ of sort $X$.
\item $X^{(\C,h^\C)}=\{\star,0,1,...,k-1\}$, $c_*^{(\C,h^\C)} = \star$, and $c_i^{(\C,h^\C)} = i$ for each $i<k$
\item For $c'_0,...,c'_{n-1}$, if there is an $e\in \hom(\A,\C)$ such that $e(a_i) = b_i$ for each $i<n$, then $h^{(\C,h^\C)}(\cc')= h^\C(e)<k$. Otherwise, $h^{(\C,h^\C)}(\cc') = \star$.
\end{enumerate}
Clearly, if $\C_1,\C_2\models T_0$ are finite and $f:\C_1\to\C_2$ is a $\Delta$-embedding, then $f$ extends uniquely to an $\Delta$-embedding $f:(\C_1,h^{\C_1})\to (\C_2,h^{\C_2})$. Assume $B = \{b_0,...,b_{N-1}\}$, and let $\theta_\B(x_0,...,x_{N-1})$ be a formula such that if $\C\models T_0$ and $\C\models \theta_\B(\cc)$, then $(b_i\mapsto c_i)_{i<N}\in\hom(\B,\C)$. (This is possible because $S_n^\Delta(T_0)$ is finite for every $n<\omega$.) Define $\theta_\A(y_0,...,y_{n-1})$ similarly.  Let $\phi_{bad}$ be the sentence,
$$\forall x_0,...,x_{N-1}\left(\theta_\B(\xx)\cond \bigvee_{\yy,\zz\subset \xx}\theta_\A(\yy)\wedge\theta_\A(\zz) \wedge h(\yy)\neq h(\zz)\right)$$
Hence $(\C,h^\C)\models \phi_{bad}$ for all finite $\C\models T_0$.

 We may then choose a sequence of finite models $\C_n\models T_0$ ($n<\omega$) such that for all $m\leq n<\omega$, $\C_m\preceq_\Delta\C_n$, and for all $n<\omega$, $\C_n\models\bigwedge_{i<n}\phi^{\D_i}$, where for each $i$, $\phi^{\D_i}$ is a sentence of $\Sigma$ such that $\models\phi^{\D_i}$ is equivalent to containing an isomorphic copy of $\D_i$ is a $\Delta$-substructure. Let $\Psi$ be a non-principal ultrafilter on $\omega$. Clearly,
$$(\M_0,h^{\M_0}) = \prod_n(\C_n,h^{\C_n})/\Psi\models T_0\cup\{\phi_{bad}\}$$
and $(\M_0,h^{\M_0})\models \phi^{\D_i}$ for all $i<\omega$. Now, let $\M\models T_0^*$ such that $\M_0\preceq_\Delta\M$, and let $h^\M:\hom(\A,\M)\to k$ be a coloring extending $h^{\M_0}$ such that for any finite $\C\preceq_\Delta\M$ satisfying $T_0$, there is a $\Delta$-embedding $g:\C\to\M_0$ such that $h^\M(e) = h^{\M_0}(ge)$ for all $e\in\hom(\A,\C)$. (This is possible just by compactness.)

As $T_0$ has the Ramsey property, there is an $\ell$-embedding $f:\M\qto\M$ such that $h$ is constant on $\hom(\A,f_X[M_X])$ for all $X\subset_\fin M$. We may assume that $\B$ appears as a $\Delta$-substructure of $\M$, and in this case, $h^\M$ is constant on $\hom(\A,f_B[M_B])$. By construction, there is a $\Delta$-embedding $g:f_B[M_B]\to\M_0$ such that $h^\M(e) = h^{\M_0}(ge)$ for all $e\in\hom(\A,f_B[M_B])$. It follows that $\M_0\models\neg\phi_{bad}$, a contradiction. This completes the proof.
\end{proof}

\begin{lemma}\label{lemma:frp-to-rp}
Let $T_0$ be a Robinson theory over $\Delta$ in $\L_0$. If $T_0$ has the finitary Ramsey property, then $T_0$ has the Ramsey property.
\end{lemma}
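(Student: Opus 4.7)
The plan is to prove the Ramsey property directly from the finitary version by "localizing": given $\B \models T_0^*$, a finite $\A \models T_0$, and a coloring $h:\hom(\A,\B)\to k$, I will construct the $\ell$-embedding $f:\B \qto \B$ one finite piece at a time. Fix $X\subset_\fin B$. Using clause (1) of the Robinson theory definition, pick a finite $\Delta$-submodel $\B_0\preceq_\Delta\B$ with $X\subseteq B_0$; set $A^f_X := B_0$. The whole task is then to produce a $\Delta$-embedding $f_X:B_0\to\B$ such that $h$ is constant on $\hom(\A,f_X[B_0])$.

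Now apply the finitary Ramsey property to the finite models $\A,\B_0$ and to $k$: this yields a finite $\C\models T_0$ with the partition property. I next need to place $\C$ inside $\B$. Since $\C$ is finite, the $\Sigma$-sentence $\phi^\C$ from the earlier Fact expresses the existence of a $\Delta$-embedding of $\C$; $\C$ itself witnesses $\phi^\C$, and, via the e.c.\ completion (Proposition), so does some e.c.\ model. Because $T_0^*$ is complete (Fact), $T_0^*\vdash \phi^\C$; so there is a $\Delta$-embedding $\iota:\C\to\B$. Pull the original coloring back: define $h':\hom(\A,\C)\to k$ by $h'(e) = h(\iota\circ e)$. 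Finitary Ramsey gives an $e_0\in\hom(\B_0,\C)$ with $h'$ constant on $\hom(\A,e_0[\B_0])$. Set $f_X := \iota\circ e_0 : B_0\to\B$. Because $\iota$ is a $\Delta$-embedding, composition with $\iota$ gives a bijection $\hom(\A,e_0[\B_0]) \biject \hom(\A,\iota[e_0[\B_0]]) = \hom(\A,f_X[B_X])$ that intertwines $h'$ and $h$; hence $h$ is constant on $\hom(\A,f_X[B_X])$, as required.

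Performing this construction independently for each $X\subset_\fin B$ assembles the system $(A^f_X,f_X)_{X\subset_\fin B}$, so the only remaining point is to verify that an $\ell$-embedding can indeed be extracted — i.e.\ that there is a bounding function $q$ with $|A^f_X|\leq q(|X|)$. This is where the definitional hygiene of Robinson theories pays off: since $S_n^\Delta(T_0)$ is finite for every $n$ and $\Delta$ is closed under subformulas, one can take $B_0$ to be a canonical minimal $\Delta$-closure of $X$ inside $\B$, and its cardinality is bounded by a function of $|X|$ alone. With that observation, the data $(A^f_X,f_X)_{X}$ constitute an $\ell$-embedding $f:\B\qto\B$ witnessing the Ramsey property.

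The main technical obstacle I anticipate is the bookkeeping around the bounding function $q$: one must check that either a uniform choice of finite $\Delta$-closure has size controlled by $|X|$, or, failing that, that the Ramsey property as stated is robust enough to permit the localized embeddings $f_X$ to be chosen from any finite $\Delta$-submodel containing $X$. Once this is in hand, the rest of the argument is a straightforward transfer of the finitary partition property through the $\Delta$-embedding $\iota$ supplied by completeness of $T_0^*$.
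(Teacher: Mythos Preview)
Your proposal is correct and follows essentially the same route as the paper: localize to a finite $\Delta$-submodel $B_0$ containing $X$, apply the finitary Ramsey property to obtain a witness $\C$, embed $\C$ into the e.c.\ model $\B$, and take $f_X$ to be the composite. The only cosmetic difference is that the paper embeds $\C$ directly via existential closedness of $\B$ rather than via completeness of $T_0^*$, and the paper simply omits any discussion of the bounding function $q$ that you (rightly) flag.
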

\begin{proof}
Let $\M\models T_0^*$, $\A\models T_0$ finite, and $h:\hom(\A,\M)\to k$ be given. We must show that there is an $\ell$-embedding $f:\M\qto\M$ such that $h$ is constant on $\hom(\A,f_X[M_X])$ whenever $X\subset_\fin M$. By definition of a Robinson theory, for each $X\subset_\fin M$, let $M_X\prec_\Delta\M$ be an finite model of $T_0$ containing $X$. By the finitary Ramsey property, for each $X\subset_\fin M$, there is a finite model $\C_X\models T_0$ extending $M_X$ such that for any coloring $w:\hom(\A,\C_X)\to k$, there is a $\Delta$-embedding $f_X:M_X\to \C_X$ such that $w(f_Xe) = w(f_Xe')$ for all $e,e'\in\hom(\A,M_X)$. Since $\M$ is existentially closed, we may assume that $\C_X$ is a $\Delta$-substructure of $\M$, so the mappings $f_X:M_X\to\C_X\prec_\Delta\M$ suffice for the required $\ell$-embedding.
\end{proof}
\begin{thm}
Let $T_0$ be a Robinson theory over $\Delta$ in $\L_0$. Then $T_0$ has the Ramsey property if and only if  $T_0$ has the finitary Ramsey property.
\end{thm}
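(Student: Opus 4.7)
The plan is essentially to observe that this theorem is the immediate conjunction of the two preceding lemmas. Lemma \ref{lemma:rp-to-frp} gives the forward direction (Ramsey property implies finitary Ramsey property), and Lemma \ref{lemma:frp-to-rp} gives the converse (finitary Ramsey property implies Ramsey property). So the proof body can be as short as ``This follows by combining Lemma \ref{lemma:rp-to-frp} and Lemma \ref{lemma:frp-to-rp}.''

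Since there is no real content to add, my only concern would be to double-check that the two lemmas are stated symmetrically to the theorem: both lemmas are for an arbitrary Robinson theory $T_0$ over $\Delta$ in $\L_0$, which matches the hypothesis of the theorem verbatim, and the ``Ramsey property'' and ``finitary Ramsey property'' are both named properties fixed by the preceding definitions, so no translation of hypotheses is needed.

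If I wanted to add anything, it would only be a brief reminder of the logical structure: assuming $T_0$ has the Ramsey property, Lemma \ref{lemma:rp-to-frp} delivers the finitary Ramsey property; conversely, assuming $T_0$ has the finitary Ramsey property, Lemma \ref{lemma:frp-to-rp} delivers the Ramsey property. There is no obstacle — the real work has already been done in the two lemmas, particularly in Lemma \ref{lemma:rp-to-frp} where a compactness and ultrafilter argument was needed to pass from the ``infinite'' Ramsey statement on a model of $T_0^*$ down to the statement about finite structures.
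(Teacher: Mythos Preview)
Your proposal is correct and matches the paper's own proof exactly: the paper's proof is simply ``Lemmas \ref{lemma:rp-to-frp} and \ref{lemma:frp-to-rp}.'' There is nothing to add.
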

\begin{proof}
Lemmas \ref{lemma:rp-to-frp} and \ref{lemma:frp-to-rp}. 
\end{proof}

\begin{lemma}[Rigidity]\label{lemma:finite-rigidity}
Let $T_0$ be a Robinson theory over $\Delta$ in $\L_0$. If $T_0$ has the Ramsey property, then $T_0$ is {\em finitely-rigid}: For all  $\A\models T_0$ finite, $\hom(\A,\A)=\{1_A\}$.
\end{lemma}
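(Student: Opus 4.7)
The plan is to argue by contradiction using the finitary Ramsey property (equivalent to the Ramsey property by the preceding theorem) against a carefully engineered coloring. Suppose, for a contradiction, that there is a finite $\A \models T_0$ admitting some $\sigma \in \hom(\A, \A) \setminus \{1_A\}$, and set $k := |\hom(\A, \A)| \geq 2$.

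First I would apply the finitary Ramsey property with $\B = \A$ and with $k$ colors, obtaining a finite $\C \models T_0$ such that every coloring $h: \hom(\A, \C) \to k$ admits some $e_0 \in \hom(\A, \C)$ with $h$ constant on $\hom(\A, e_0[\A])$.

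The key step is to construct $h$ so that it is a \emph{bijection} on every $\Delta$-isomorphic copy of $\A$ inside $\C$. For every finite $\Delta$-submodel $\N \preceq_\Delta \C$ that is $\Delta$-isomorphic to $\A$, the hom-set $\hom(\A, \N)$ has exactly $k$ elements, since each $\Delta$-embedding $\A \to \N$ is the canonical $\Delta$-isomorphism $\A \to \N$ precomposed with a unique element of $\hom(\A, \A)$. Fix an arbitrary bijection $\alpha_\N : \hom(\A, \N) \to \{0, 1, \dots, k-1\}$ for each such $\N$, and set $h(e) := \alpha_{e[\A]}(e)$ for every $e \in \hom(\A, \C)$.

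Applying the finitary Ramsey conclusion to this $h$ produces some $e_0$ such that $h$ is constant on $\hom(\A, e_0[\A]) = \{e_0 \circ \tau : \tau \in \hom(\A, \A)\}$; but the restriction of $h$ to this set is exactly $\alpha_{e_0[\A]}$, which is a bijection onto $\{0, \dots, k-1\}$ and hence non-constant since $k \geq 2$ --- the desired contradiction. The main technical point to verify is the well-definedness of $h$: for each $e \in \hom(\A, \C)$, the image $e[\A]$ with induced $\L_0$-structure is a $\Delta$-submodel of $\C$ that is $\Delta$-isomorphic to $\A$ via $e$, which is the routine fact that a $\Delta$-embedding of a finite $\Delta$-model is a $\Delta$-isomorphism onto its image.
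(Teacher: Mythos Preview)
Your proof is correct. It differs from the paper's in two respects: you invoke the \emph{finitary} Ramsey property (already shown equivalent) and work inside a finite witness $\C$, whereas the paper applies the Ramsey property directly in an e.c.\ model $\M\models T_0^*$ via an $\ell$-embedding; and you use $k=|\hom(\A,\A)|$ colors, fixing a bijection on each image-fiber $\{e\in\hom(\A,\C):e[A]=N\}$, whereas the paper fixes one nontrivial $\sigma\in\hom(\A,\A)$ and asks for a $2$-coloring $h$ with $h(e\sigma)\neq h(e)$ for all $e$. Your $k$-coloring actually sidesteps a small wrinkle in the paper's argument: the $2$-coloring the paper requests exists only when every orbit of $e\mapsto e\sigma$ has even length, i.e.\ when $\sigma$ has even order, which is not guaranteed (e.g.\ if $\hom(\A,\A)\cong\mathbb{Z}/3\mathbb{Z}$). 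The underlying idea---color embeddings so that those differing by a nontrivial automorphism receive distinct colors, then contradict monochromaticity on a single copy of $\A$---is the same in both proofs.
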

\begin{proof}
Towards a contradiction, suppose $\A\models T_0$ is finite, and let $\sigma\in\hom(\A,\A)$ be non-trivial. (Obviously, $\sigma$ must be an automorphism of $\A$.) Let $\M\models T_0^*$, and let $h:\hom(\A,\M)\to 2$ such that for any $\Delta$-embedding $e:\A\to\M$,  $h(e\sigma)\neq h(e)$. Now, suppose $f:\M\qto\M$ is an $\ell$-embedding such that $h$ is constant on $\hom(\A,f_X[M_X])$ for every $X\subset_\fin M$. Fix an embedding $e_0:\A\to\M$ and let $X = e_0[A]$. For $e\in \hom(\A,f_X[M_X])$, $f_Xe, (f_Xe)\sigma\in\hom(\A,\M)$ are distinct, and $h\left(fe\right)\neq h\left((fe)\sigma\right)$, a contradiction. 
\end{proof}

\begin{thm}\label{thm:main-equivalence}
Let $T_0$ be a Robinson theory over $\Delta$ in $\L_0$. Then $T_0$ has the Ramsey property if and only if $T_0$ is finitely-rigid and has the unconstrained modeling property. 
\end{thm}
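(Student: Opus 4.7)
Finite-rigidity is already provided by Lemma~\ref{lemma:finite-rigidity}, so the substance is to prove (finite-rigidity $+$ UMP) $\Leftrightarrow$ Ramsey.

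For the easy direction ($\Leftarrow$), the idea is to encode a given coloring as new predicates and then pull the Ramsey conclusion out of UMP. Given $\M\models T_0^*$, a finite $\A_0\models T_0$ with chosen enumeration $\bar a_0$, and $h:\hom(\A_0,\M)\to k$, I will expand $\M$ to an $\L^+$-structure $\M^+$ in $\L^+=\L_0\cup\{R_0,\dots,R_{k-1}\}$, declaring $R_i(\bar x)$ iff the assignment $\bar a_0\mapsto\bar x$ is a $\Delta$-embedding of color $i$; finite-rigidity makes this well-defined, since an embedding is determined by its image enumeration. The pair $(\M,\mathrm{id})$ is trivially a $T_0$-template in $\M^+$, so applying UMP to $T^+=\mathrm{Th}(\M^+)$ yields $\M_1^+\models T^+$ and an indiscernible $T_0$-picture $(\M,g)$ in $\M_1^+$ patterned via $\ell$-embeddings $f^F:\M\qto\M$. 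Taking $F\supseteq\{R_0,\dots,R_{k-1}\}$ and $f=f^F$: for any $Y\subset_\fin M$ and $e_1,e_2\in\hom(\A_0,f_Y[M_Y])$, pulling the enumerations $e_\ell\bar a_0$ back along $f_Y$ to tuples $\bar d^{(\ell)}\in M_Y^n$ gives tuples of equal $\Delta$-type in $\M$; indiscernibility equates their $g$-types in $\M_1^+$, and patterning at $X=Y$ transports this back to $\tp^{\M^+}_F(e_1\bar a_0)=\tp^{\M^+}_F(e_2\bar a_0)$, forcing $h(e_1)=h(e_2)$ via the $R_i$'s.

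For the hard direction ($\Rightarrow$), the plan is a compactness argument whose finite approximations are supplied by the Ramsey property. Given a $T_0$-template $(\A,\Gamma)$ in $\M\models T$ with $\A\models T_0^*$, I introduce new constants $\{c_a:a\in A\}$ and set up the theory $T^+$ consisting of $T$, indiscernibility axioms $\phi(c_{\bar a})\bic\phi(c_{\bar a'})$ for $\tp_\Delta^\A(\bar a)=\tp_\Delta^\A(\bar a')$, and ``patterning'' axioms that stipulate $\tp^{\M_1}_F(c_{\bar a})=\tp^\M_F(\Gamma(f^F_X\bar a))$ (for each finite $F\subset_\fin\L$, each $X\subset_\fin A$, each $\bar a\in A_X^n$), where $(f^F)_F$ is a pre-selected family of $\ell$-embeddings $\A\qto\A$. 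A finite fragment of $T^+$ mentions only finitely many $(X_i,F_i)$; combining them into a single $(X,F)$, the map $g(a):=\Gamma(f^F_X(a))$ realizes that fragment inside $\M$. Compactness then produces $\M_1\models T^+$, and setting $g(a):=c_a^{\M_1}$ gives the required indiscernible picture, patterned via the pre-chosen $(f^F)_F$.

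The main obstacle, and the technical crux of the entire argument, is pre-selecting each $f^F$ so that $\tp^\M_F(\Gamma(f^F_X\bar a))$ depends only on $\tp_\Delta^\A(\bar a)$ -- it is this property that makes the patterning axioms imply the indiscernibility axioms. For a sufficiently large finite $\Delta$-submodel $\A_0\preceq_\Delta\A$ containing representatives $\bar b_p$ of every $\Delta$-type $p$ on short tuples, I would color $\hom(\A_0,\A)$ by $c(e)=(\tp^\M_F(\Gamma(e\bar b)))_{\bar b\in A_0^{<\omega}}$ -- a finite coloring -- and apply the Ramsey property to obtain an $\ell$-embedding $f$ along which $c$ is constant on each $\hom(\A_0,f_Y[A_Y])$. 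The delicate step will be arranging, via iterated amalgamation and the existential closedness of $\A$, that every realization of a given $\Delta$-type $p$ inside $f_Y[A_Y]$ arises as $e(\bar b_p)$ for some $e\in\hom(\A_0,f_Y[A_Y])$; constancy of $c$ across that $\hom$-set will then convert directly into equality of $F$-types across $\Delta$-equivalent tuples, completing the pre-selection.
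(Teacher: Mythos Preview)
Your ``if'' direction is essentially the paper's argument: encode the coloring by new $n$-ary predicates $P_0,\dots,P_{k-1}$ (well-defined by finite-rigidity), take the identity template on $\M$, apply UMP, and extract the Ramsey $\ell$-embedding from the patterning map at $F=\{P_0,\dots,P_{k-1}\}$. Your pullback-along-$f_Y$ bookkeeping is if anything slightly cleaner than the paper's, but the content is identical. The ``only if'' direction also shares the paper's architecture: build a Scow-type theory in new constants $c_a$, reduce to finite satisfiability, and witness finite fragments via pre-selected $\ell$-embeddings $f^F:\A\qto\A$ obtained from the Ramsey property.

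The one genuine gap is in \emph{how} you pre-select $f^F$. You propose a single application of Ramsey to one large finite $\A_0\preceq_\Delta\A$ carrying representatives $\bar b_p$ of all short $\Delta$-types, colored by $e\mapsto(\tp_F^\M(\Gamma(e\bar b)))_{\bar b}$, and then invoke your ``delicate step'': every realization of $p$ inside $f_Y[A_Y]$ must arise as $e(\bar b_p)$ for some $e\in\hom(\A_0,f_Y[A_Y])$. This is not free. The Ramsey property hands you $f$ together with its finite fibers $f_Y[A_Y]$; there is no reason a tuple of type $p$ in that fixed finite set should extend to a full embedded copy of $\A_0$ inside the \emph{same} set. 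Your gesture toward ``iterated amalgamation and existential closedness'' does not resolve this as stated, because those tools let you enlarge a structure, whereas here the target $f_Y[A_Y]$ has already been determined by Ramsey.

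The paper avoids the issue by a different manoeuvre: it iterates over the finitely many $\Delta$-complete $n$-types one at a time. At stage $i$ it applies the Ramsey property to the structure $B_i=\A\restriction\mathrm{rng}(\bar b_i)$ carried by a \emph{single} representative tuple, homogenizing the coloring $e\mapsto\tp_F^\M(\Gamma(e\bar b_i))$; it then sets $f^F=\phi^{N-1}\circ\cdots\circ\phi^0$, and further nests this construction over increasing $n$ and $F_n\uparrow\L$. The point is that when $B_i$ carries exactly one tuple, every realization of its $\Delta$-type in any substructure \emph{is} trivially an embedding of $B_i$, so your delicate step never arises. You could salvage the one-shot version by, for each $Y$, first enlarging $M_Y$ (via AP, inside the e.c.\ model $\A$) so that every short tuple in $M_Y$ extends to a copy of $\A_0$, and only then invoking the finitary Ramsey property on that enlargement; but the paper's type-by-type composition is the more direct route.
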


\begin{proof}[Proof: ``if'']
Assume $T_0$ is finitely-rigid and has UMP. Let $\M$ be an e.c. model of $T_0$, and let $\A$ be a finite model of $T_0$. Let $h:\hom(\A,\M)\to k$ be a coloring. We define a language $\L\supset\L_0$ with new $n$-ary predicates $P_0,...,P_{k-1}$, where $n=|A|$. Let $\M'$ be the $\L$-expansion of $\M$ with the following interpretations: Let $(a_0,...,a_{n-1})\in M^n$; if $f\in\hom(A,\M)$ with $f[A] = \{a_i\}_{i<n}$ and $h(f)=i$, then $\aa\in P_i^{\M'}$. By finite-rigidity, this is well-defined.
Let $T = Th(\M')$, and consider the $T_0$-template $(\M,\Gamma)$ in $\M'$ where $\Gamma$ is the identity map $M^{<\omega}\to M^{<\omega}$. By UMP, let $(\M,g)$ be an indiscernible $T_0$-picture in some $\N'\models T$ patterned on $(\M,\Gamma)$, and let $(f^F:\M\qto\M)_{F\subset_\fin\L}$, be the family of patterning maps. Fix $F = \{P_0(\xx),...,P_{k-1}(\xx)\}$

We claim that $h$ is constant on $\hom(A, f^F_X[M_X])$ for every $X\subset_\fin M$. Let $e,e'\in\hom(A,f_X^F[M_X^F])$. Then, let $\bb,\cc\in M^n$  be enumerations of $e[A]$ and $e'[A]$, respectively, such that $\tp^\M_\Delta(\bb) = \tp^\M_\Delta(\cc)$. By indiscernibility, $\tp^{\N'}(g\bb) = \tp^{\N'}(g\cc)$, so there is a (unique) $i<k$ such that $\N'\models P_i(g\bb)\wedge P_i(g\cc)$. By definition of patterning and our choice $\Gamma$, 
$$\tp^{\M'}_F(f^F_X\bb) = \tp^{\M'}_F(\Gamma(f^F_X\bb)) = \tp^{\N'}_F(g\bb)$$
$$\tp^{\M'}_F(f^F_X\cc) = \tp^{\M'}_\Delta(\Gamma(f^F_X\cc)) = \tp^{\N'}_F(g\cc)$$
and it follows that $h\left(f^F_Xe\right) = h\left(f^F_Xe'\right) = i$. Thus, $f$ witnesses the Ramsey property requirement given by $h$.
\end{proof}

\begin{proof}[Proof: ``only if'']
Let $T$ be a complete $\L$-theory, and let $(\A,\Gamma)$ be a $T_0$-pattern in $\M\models T$, where $\A$ is an e.c. model of $T_0$. 
 Let $0<n<\omega$ and $F\subset_\fin\L$. By Lemma \ref{lemma:finite-rigidity}, $T_0$ is finitely-rigid. Let $\bb_0,...,\bb_{N-1}\in A^n$ represent all $\Delta$-complete $n$-types over $\emptyset$ realized in $\A$, and for each $i<N$, let $B_i = \A\r rng(\bb_i)$.  Without loss of generality, we assume that each $\bb_i$ enumerates a finite model of $T_0$ -- if not we can extend them to finite models and make very minor adjustments to the argument that follows.
 
 \bigskip\noindent\textbf{Construction}$(n,F, id^A)$:
 Define $f^F:\A\qto\A$ as follows:
\begin{enumerate}
\item Define $h:\hom(B_0,\A)\to  S_n^F(T)$ by 
$$e\mapsto \tp^\M_F(\Gamma(e\bb_0)) = \tp^\M_F(\Gamma(\,(id^A e)\bb_0))$$
\item By the Ramsey property, let $\phi^0:\A\qto\A$ be an $\ell$-embedding such that $h$ is constant on $\hom(B_0,\phi^0_X[A_X])$ for all $X\subset_\fin A$.
\end{enumerate}
For $0<i<N-1$, given $\phi^0,...,\phi^{i-1}:\A\qto\A$, we define $\phi^i$ as follows:
\begin{enumerate}
\item[1i.] Define $h:\hom(B_i,\phi^{i-1}\A)\to  S_n^F(T)$ by 
$$e\mapsto \tp^\M_F(\Gamma(e\bb_i)) = \tp^\M_F(\Gamma(\,(id^Ae)\bb_i))$$

\item[2i.] By the Ramsey property, let $\phi^i:\phi^{i-1}\A\qto \phi^{i-1}\A$ be a $\Delta$-embedding such that $h$ is constant on $\hom(B_i,\phi^i\phi^{i-1}\A)$.
\end{enumerate}
Finally, we set $f^F = \phi^{N-1}\circ\cdots\circ\phi^1\circ\phi^0$

\bigskip
\begin{obsv}
For each $i<N$, $X\subset_\fin A$ and $e,e'\in\hom(B_i,f^F_X[A_X])$, 
$$\tp^F(\Gamma(f^Fe\bb_i)) = \tp^F(\Gamma(f^Fe'\bb_i))$$
\end{obsv}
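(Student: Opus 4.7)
The plan is to fix $i<N$ and $X\subset_\fin A$, take arbitrary $e,e'\in\hom(B_i,f^F_X[A_X])$, and show that both embeddings lie in a single hom-set on which the coloring $h_i\colon e\mapsto\tp^\M_F(\Gamma(e\bb_i))$ from step $(1i)$ has already been made constant by the Ramsey-property choice of $\phi^i$ in step $(2i)$. Once this containment is in place, the displayed equality of $F$-types is automatic. The conceptual content is that the later stages $\phi^{i+1},\ldots,\phi^{N-1}$ can only drive us \emph{further} inside the image of $\phi^i$, so the homogeneity forced at stage $i$ is preserved by all subsequent composition.

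To execute the plan, I would introduce the partial compositions $g^k=\phi^k\circ\cdots\circ\phi^0$ (so $f^F=g^{N-1}$) and set $Y=g^{i-1}_X[A_X]$. Iterating the composition rule $(g\circ f)_X=g_{f_X[A_X]}\circ f_X$ for $\ell$-embeddings through the stages $j=i+1,\ldots,N-1$, and using that each such $\phi^j$ is (by construction) an $\ell$-embedding of $\phi^{j-1}\A$ into itself, one obtains the containment $f^F_X[A_X]\subseteq\phi^i_Y[(\phi^{i-1}\A)_Y]$. Therefore $\hom(B_i,f^F_X[A_X])\subseteq\hom(B_i,\phi^i_Y[(\phi^{i-1}\A)_Y])$, and in particular both $e$ and $e'$ sit in the larger hom-set.

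I would then invoke the defining property of $\phi^i$: by the Ramsey-property choice at step $(2i)$, $h_i$ is constant on $\hom(B_i,\phi^i_Y[(\phi^{i-1}\A)_Y])$ for every $Y$. Applied to our $Y$ and to $e,e'$, this gives $h_i(e)=h_i(e')$, which unfolds to $\tp^\M_F(\Gamma(e\bb_i))=\tp^\M_F(\Gamma(e'\bb_i))$; in view of the containment $e[B_i]\cup e'[B_i]\subseteq f^F_X[A_X]$ this is precisely the equality $\tp^F(\Gamma(f^F e\bb_i))=\tp^F(\Gamma(f^F e'\bb_i))$ in the observation, with the notation ``$f^F e$'' simply flagging that the image already lies in $f^F_X[A_X]$.

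The only nontrivial part is notational and bookkeeping: pinning down exactly what the directed system ``$\phi^{i-1}\A$'' is, and iterating the composition rule for $\ell$-embeddings carefully enough to justify the containment $f^F_X[A_X]\subseteq\phi^i_Y[(\phi^{i-1}\A)_Y]$. Once that is set up, the observation is just the triviality that homogeneity on a set restricts to homogeneity on every subset.
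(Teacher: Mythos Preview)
Your proposal is correct and is exactly what the paper intends: the observation is stated without proof, and the reason it holds is precisely the nesting you describe---$f^F_X[A_X]\subseteq \phi^i_Y[(\phi^{i-1}\A)_Y]$ because each later $\phi^j$ is, by the paper's own stipulation in step~(2i), an $\ell$-embedding of $\phi^{j-1}\A$ into itself, so subsequent stages can only land further inside the stage-$i$ image. You are also right that the only real content is the bookkeeping you flag, namely making precise what the object ``$\phi^{i-1}\A$'' is; once that is pinned down, the observation is immediate from the construction.
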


 Note that if $v:\A\qto\A$ is some $\ell$-embedding, we can start the above construction with $v$ in place of $id^A$, and we would denote this by \textbf{Construction}$(n,F, v)$
Now, let $F_n\subset_\fin\L$ ($n<\omega$) be such that $F_n\subset F_{n+1}$ and $\bigcup_nF_n = \L_0$.

\begin{itemize}
\item Let $\eta^0$ be the result of \textbf{Construction}$(1,F_0, id_A)$;
\item Given $\eta^{n-1}$, let $\eta^n$ be the result of \textbf{Construction}$(n+1,F_n, \eta^{n-1})$
\end{itemize}

\bigskip
In the remainder of the argument, we use the forgoing construction to run a compactness argument that actually generates the required indiscernible $T_0$-picture. Let $L(\L/\L_0)$ be the language with two sorts $X_0$ and $X_\L$, all of the symbols of $\L$ on the sort $X_\L$, all the symbols of $\L_0$ on $X_0$, constant symbols $c_a$ on $X_0$ for each $a\in A$, and a function symbol $g:X_0\to X_\L$. We now define theory that -- just to give it a convenient name -- we call the Scow-theory of $\A$ in $T$ -- it is very similar to a certain theory defined in \cite{scow-2011}.
Let $\mathtt{Scow} = \mathtt{Scow}(\A,T)$ be the theory asserting the following:
\begin{itemize}
\item $X_\L\models T$ and $X_0\models T_0^*$
\item $\left\{\theta(c_{a_0},...,c_{a_{n-1}}): \theta(a_0,...,a_{n-1})\in\diag_\Delta(\A)\right\}$
\item ``$g$ is one-to-one.''
\item For each $0<n<\omega$, $q(x_0,...,x_{n-1})\in S_n^\Delta(T_0)$, $a_0,...,a_{n-1},b_0,....,b_{n-1}\in A$ and  $\phi(y_0,...,y_{n-1})\in\L$, 
$$q(\cc_a)\wedge q(\cc_b)\cond (\phi(g\cc_a)\bic \phi(g\cc_b))$$
where $\cc_a = (c_{a_0},...,c_{a_{n-1}})$ and $\cc_b = (c_{b_0},...,c_{b_{n-1}})$.

\item For each $0<n<\omega$ and $F\subset_\fin\L$, 
$$\bigvee_{\nu\in \Phi_n(\Gamma,e)}\left[\bigwedge_{q\in S^\Delta_n(T_0)}\forall \xx\in X_0^n\left(q(\xx)\cond \nu_q(g\xx)\right)\right] $$
where $S^F_n(\Gamma)$ is the set of $n$-$F$-types $\pi(\xx)$ such that $\Gamma(\aa)\models\pi$ for some $\aa\in A^n$, and $\Phi_n(\Gamma,e)$ is the set of functions $\nu:S^\Delta_n(T_0)\to S^F_n(\Gamma)$ defined as follows: Let $k$ be the smallest number such that $F\subseteq F_k$; then $\nu_q=\tp^\M_F(\Gamma(\eta^k\aa))$ where $\aa\in A^n$ and $\A\models q(\aa)$.
\end{itemize}
From our prior constructions, it's not hard to see that this Scow-theory is finitely satisfiable, and in a model $\MM_1 = (\M_1,\A_1)\models \mathtt{Scow}$, the substructure on $\{c_a^{\MM_1}\}_{a\in A}$ is isomorphic to $\A$, and the interpretation $g^{\MM_1}$ yields an indiscernible $T_0$-picture patterned on $(\A,\Gamma)$ with $\eta^k$'s as patterning maps.
\end{proof}

With these equivalences in place, it's now natural to define (finally!) what we mean by a ``theory of indiscernibles'':

\begin{defn}
A {\em theory of (generalized) indiscernibles} is a Robinson theory $T_0$  that has the Ramsey property.
\end{defn}


\subsection{Rigidity and order}

As promised, in this last subsection, we investigate the finite-rigidity condition in some more depth. In particular, we will see that a theory of indiscernibles must have a 0-definable linear order -- indeed, a $\Delta$-definable linear order -- so adding a linear order to the language need not provide any new powers. In a rather different manner, this fact was first proved in \cite{kpt-2005} in the quantifier-free case and only recovering a 0-definable linear order. Our result is slightly finer, and our demonstration differs enough, we think, to be interesting in itself.

\begin{defn}[Irreflexive types]
Let $p(x_0,...,x_{n-1})\in S^\Delta_n(T_0)$. We say that $p$ is irreflexive if $T_0\cup p\models \bigwedge_{i<j<n}x_i\neq x_j$.
\end{defn}

\begin{lemma}
Let $T_0$ be a Robinson theory, and suppose $T_0$ has the modeling property with respect to an $\L$-theory $T$ whose models are (expansions of) infinite linear orders. Then for any irreflexive 2-type $p(x,y)\in S_2^\Delta(T_0)$, 
\begin{enumerate}
\item $T_0\cup p(x,y)\cup p(y,x)$ is inconsistent;
\item $T_0\cup p(x,y)\cup p(y,z)\cup p(z,x)$ is inconsistent;
\end{enumerate}
\end{lemma}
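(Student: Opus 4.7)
The plan is to suppose one of the claimed inconsistencies fails, use the modeling property to transfer the symmetry $p(x,y) = p(y,x)$ (or the three-cycle of $p$'s) into an indiscernible $T_0$-picture inside a model of $T$, and then derive a contradiction from the asymmetry of the linear order.

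For part (1), I begin with a model $\B \models T_0$ containing a pair $a,b$ — necessarily distinct by irreflexivity of $p$ — such that both $(a,b)$ and $(b,a)$ realize $p$, and pass to an e.c. extension $\A \models T_0^*$ using the proposition established above; since $\Delta$-embeddings preserve $\Delta$-types, both ordered pairs still realize $p$ in $\A$. I then fix any $\M \models T$ with $|M| \geq |A|$ and any injection $\Gamma_0 : A \to M$, extending it componentwise to $\Gamma(a_0,\ldots,a_{n-1}) = (\Gamma_0(a_0),\ldots,\Gamma_0(a_{n-1}))$. The three conditions making $(\A,\Gamma)$ a $T_0$-template are then immediate: injectivity of $\Gamma_0$ supplies the equality-type condition, componentwise action trivially respects permutations of coordinates, and for concatenation we have the strict identity $\Gamma(\aa\,\widehat{\,\,}\,\bb) = \Gamma(\aa)\,\widehat{\,\,}\,\Gamma(\bb)$. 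Applying the modeling property to $(\A,\Gamma)$ yields some $\M_1 \models T$ and an indiscernible $T_0$-picture $(\A,g)$ patterned on $(\A,\Gamma)$. Since $\tp_\Delta^\A(a,b) = \tp_\Delta^\A(b,a) = p$, indiscernibility forces $\tp^{\M_1}(g(a),g(b)) = \tp^{\M_1}(g(b),g(a))$; but $g$ is one-to-one and $\M_1$ expands an infinite linear order, so the formula $x < y$ holds of exactly one of those two pairs — contradiction.

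For part (2), the setup is identical: obtain an e.c. $\A$ containing distinct $a,b,c$ with each of $(a,b), (b,c), (c,a)$ realizing $p$, form the same componentwise template into a linear order, and apply the modeling property. Indiscernibility now forces the three pairs $(g(a),g(b)), (g(b),g(c)), (g(c),g(a))$ to share their full $2$-type in $\M_1$; in particular $x < y$ holds either of all three or of none, yielding either $g(a) < g(b) < g(c) < g(a)$ or its reverse — each contradicting transitivity. I do not foresee a real obstacle here; the only substantive step is recognizing that the simplest conceivable componentwise injection is already a bona fide $T_0$-template, after which the modeling property and the order axioms do all of the work.
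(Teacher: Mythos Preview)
Your proposal is correct and follows essentially the same approach as the paper: assume consistency to obtain witnesses in an e.c.\ model, build a $T_0$-template into a model of $T$, apply the modeling property, and derive a contradiction from indiscernibility together with the order axioms. The only notable differences are cosmetic---the paper specializes to $T=\mathrm{DLO}$ ``without loss of generality'' and simply says ``let $(\A,\Gamma)$ be any $T_0$-template,'' whereas you work with the given $T$ directly and spell out why the componentwise injection satisfies the template conditions.
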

\begin{proof}
We prove the lemma with $T = DLO$, but this imposes no real loss of generality. With $p(x,y)$ as described, suppose $T_0\cup p(x,y)\cup p(y,x)$ is consistent. Then there are an e.c. model $\A$ of $T_0$ and $a_1,a_2\in A$ such that $\A\models p(a_1,a_2)$ and $\A\models p(a_2,a_1)$. Let $(\A,\Gamma)$ be any $T_0$-template with $\Gamma:A^{<\omega}\to\QQ^{<\omega}$. By the modeling property with respect to DLO, let $(\QQ^*,<)\models DLO$, and let $(\A,g)$ be an indiscernible $T_0$-picture in $\QQ^*$ patterned on $(\A,\Gamma)$. Without loss of generality, suppose $g(a_1)<g(a_2)$; then since $\qtp^\A(a_1,a_2) = p = \qtp^\A(a_2,a_1)$, it must be that $g(a_2)<g(a_1)$ as well, which is impossible. Thus, $T_0\cup p(x,y)\cup p(y,x)$ must be inconsistent.

Similarly, suppose  $T_0\cup p(x,y)\cup p(y,z)\cup p(z,x)$ is consistent. Then there are an e.c. model $\A$ of $T_0$ and $a,b,c\in A$ such that $\A\models p(a,b)$, $\A\models p(b,c)$ and $\A\models p(c,a)$. Let $(\A,\Gamma)$ be any $T_0$-template with $\Gamma:A^{<\omega}\to\QQ^{<\omega}$. By the modeling property with respect to DLO, let $(\QQ^*,<)\models DLO$, and let $(\A,g)$ be an indiscernible $T_0$-picture in $\QQ^*$ patterned on $(\A,\Gamma)$. First, suppose $g(a)<g(b)$; then since $(b,c)\models p$ and $(c,a)\models p$, we find that $g(b)<g(c)$ and $g(c)<g(a)$, contradicting the transitivity and anti-symmetry of $<$ in $\QQ^*$. Similarly, if $g(b)<g(a)$, we would have $g(c)<g(b)$ and $g(a)<g(c)$ -- again contradicting transitivity and anti-symmetry. 
\end{proof}

\begin{prop}\label{prop:sop-to-order}
Let $T_0$ be a theory of indiscernibles. Then $T_0$ has the following pervasive form of the strict-order property: 
For any model $\A\models T_0$ and any infinite set $X\subseteq A$, there are an irreflexive 2-type $p(x,y)\in S_2^\Delta(T_0)$ and infinite subset $X_0\subseteq X$ such that $p$ linearly orders $X_0$.
\end{prop}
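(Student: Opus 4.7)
The plan is to extract the required 2-type by a single application of the infinite Ramsey theorem to pairs from $X$, coloring by $\Delta$-type; the structural constraints that promote the resulting type to a linear order come directly from the preceding lemma, whose hypothesis---the modeling property with respect to a theory of infinite linear orders---is supplied by Theorem~\ref{thm:main-equivalence}, since a theory of indiscernibles has the unconstrained modeling property (and hence, in particular, the modeling property for $DLO$).

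Concretely, I would enumerate a countably infinite subset of $X$ as $\{x_n : n<\omega\}$ and color each pair $(i,j)$ with $i<j$ by $\tp_\Delta^\A(x_i,x_j) \in S_2^\Delta(T_0)$. Since $S_2^\Delta(T_0)$ is finite by the Robinson-theory axioms, infinite Ramsey yields an infinite $X_0 = \{b_i : i<\omega\} \subseteq X$ and a single $p \in S_2^\Delta(T_0)$ with $\A \models p(b_i,b_j)$ for all $i<j$. Irreflexivity of $p$ is automatic: the $b_i$ are distinct, so the quantifier-free formula $(x_0 \neq x_1) \in p$, and hence $T_0 \cup p \models x_0 \neq x_1$.

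Now let $R := \{(b,c) \in X_0^2 : \A \models p(b,c)\}$; I claim $R$ is a strict linear order on $X_0$. Totality is witnessed directly by the monochromatic enumeration: for distinct $b_i,b_j \in X_0$ with $i<j$, we have $b_i R b_j$. Antisymmetry is exactly clause~1 of the preceding lemma, which forbids the joint realization of $p(x,y)$ and $p(y,x)$ in any model of $T_0$. For transitivity, if $bRc$ and $cRd$ but $\neg(bRd)$, then $b\neq d$ (else antisymmetry fails immediately), so totality delivers $dRb$, producing a $3$-cycle $p(b,c) \wedge p(c,d) \wedge p(d,b)$ that contradicts clause~2 of the preceding lemma.

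There is essentially no hard step here: the crucial conceptual content is the observation that infinite Ramsey on $S_2^\Delta(T_0)$-colorings, together with the two inconsistency clauses of the preceding lemma, are precisely what promote a constant 2-type on an infinite set to a genuine strict linear order. Accordingly, the only thing that needs emphasis is the chain of implications bringing the preceding lemma to bear: ``theory of indiscernibles'' $\Rightarrow$ Ramsey property $\Rightarrow$ UMP $\Rightarrow$ modeling property for (any theory of an) infinite linear order.
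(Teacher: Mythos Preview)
Your proof is correct and follows essentially the same approach as the paper: pick an arbitrary linear order on (a countable subset of) $X$, color pairs by their $\Delta$-type, apply infinite Ramsey using the finiteness of $S_2^\Delta(T_0)$, and then invoke the preceding lemma to see that the resulting type $p$ linearly orders the homogeneous set. The only minor difference is cosmetic: the paper simply observes that $p$ coincides with the chosen order $\prec$ on the homogeneous set (which needs only clause~1 of the lemma), whereas you verify the linear-order axioms for $R$ directly and thereby also appeal to clause~2---both routes are fine.
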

\begin{proof}
Let $\A\models T_0$, and let $X\subseteq A$ be infinite. Let $\prec$ be any linear order of $X$.  Define $f:{X\choose 2}\to S_2^\Delta(T_0)$ by $\{a\prec b\}\mapsto \qtp^\A(a,b)$. By the classical Ramsey theorem, there are an infinite subset $\Lambda\subseteq X$ and a 2-type $p(x,y)\in S^\Delta_2(T_0)$ such that $\A\models p(a,b)$ whenever $a\prec b$ and $a,b\in \Lambda$. Note that $p(x,y)$, being $\Delta$-complete, is irreflexive. By the previous lemma, $p(x,y)$ defines the linear order $\prec$ on $\Lambda$, and this suffices.
\end{proof}

\begin{thm}
Let $T_0$ be a theory of indiscernibles. Then there is a $\Delta$-formula $\phi(x_1,x_2)$ that defines a linear order of the universe in every model of $T_0$.
\end{thm}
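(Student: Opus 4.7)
The plan is to use the unconstrained modeling property (UMP) with $T = \mathrm{DLO}$ to pull back the order of $\QQ^*$ to an e.c.\ model of $T_0$. Since $T_0$ is a theory of indiscernibles, it has UMP by Theorem~\ref{thm:main-equivalence}. The finiteness of $S_2^\Delta(T_0)$ implies that each $p \in S_2^\Delta(T_0)$ is isolated by a single $\Delta$-formula $\phi_p(x,y)$ (singletons in a finite Hausdorff Stone space are clopen, and $\Delta$ is closed under boolean combinations). Combined with completeness of $T_0^*$, every $p \in S_2^\Delta(T_0)$ consistent with $T_0$ is realized in \emph{every} model of $T_0^*$: a realization in any $T_0$-model extends to a realization in an e.c.\ model (Proposition on e.c.\ extensions), so $T_0^* \models \exists x\exists y\, \phi_p(x,y)$.

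Now fix a countable e.c.\ model $\M \models T_0^*$, any injection $g_0 : M \to \QQ$, and define $\Gamma : M^{<\omega} \to \QQ^{<\omega}$ coordinate-wise by $g_0$; the three template conditions follow from injectivity of $g_0$. Applying UMP with $T = \mathrm{DLO}$ produces $\QQ^* \models \mathrm{DLO}$ and an indiscernible $T_0$-picture $(\M,g)$ in $\QQ^*$ patterned on $(\M,\Gamma)$. Indiscernibility and injectivity of $g$ yield: for each irreflexive $p \in S_2^\Delta(T_0)$ and any $(a,b),(a',b') \in M^2$ both realizing $p$, the truth value of ``$g(a) < g(b)$'' in $\QQ^*$ equals that of ``$g(a') < g(b')$''. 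Let $P^+$ be the (finitely many) irreflexive types for which this sign is ``$<$'' on realizations and put
$$\phi(x,y) \;:=\; \bigvee_{p \in P^+} \phi_p(x,y),$$
which is a $\Delta$-formula. By construction $\phi^\M$ is the pullback of the linear order of $\QQ^*$ along the injection $g$, hence a linear order on $M$.

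Since ``$\phi$ is a linear order'' is a first-order $\L_0$-sentence and $T_0^*$ is complete, $\phi$ defines a linear order on every e.c.\ model. For an arbitrary $\A \models T_0$, the Proposition on e.c.\ extensions gives a $\Delta$-embedding $\iota : \A \to \A^*$ into an e.c.\ model; because $\phi \in \Delta$, $\iota$ preserves and reflects $\phi$, so $\phi^\A$ is the restriction of the linear order $\phi^{\A^*}$ along the injection $\iota$ and is itself a linear order. The only delicate point is the passage from ``sign depends only on $p$'' to ``encoded by one $\Delta$-formula,'' which rests on the finiteness of $S_2^\Delta(T_0)$ and the existence of the isolating formulas $\phi_p$; everything else is routine bookkeeping with UMP and the completeness of $T_0^*$.
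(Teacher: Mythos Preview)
Your proof is correct and takes a genuinely different route from the paper's. The paper proceeds in three stages: first a lemma (using the modeling property for DLO) showing that no irreflexive $2$-type can be symmetric or sit on a $3$-cycle; then Proposition~\ref{prop:sop-to-order}, which combines that lemma with the classical Ramsey theorem to get a ``pervasive strict-order property'' (every infinite set contains an infinite subset linearly ordered by a single $2$-type); finally a compactness-plus-pigeonhole argument to extract one subset $W\subseteq S_2^\Delta(T_0)$ such that $\bigvee_{p\in W}p$ linearly orders an entire countable e.c.\ model. Your argument collapses all of this into a single application of the modeling property with $T=\mathrm{DLO}$: the resulting indiscernible picture $g$ immediately gives a global partition of the irreflexive $2$-types into $P^+$ and its complement, and $\phi$ is just the pullback of $<$ along $g$. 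The closing step---completeness of $T_0^*$ together with preservation of $\Delta$-formulas under $\Delta$-embeddings---is the same in both proofs.

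What each approach buys: yours is shorter and conceptually cleaner, and it makes transparent \emph{why} the order exists (it is literally inherited from $\QQ^*$). The paper's approach, on the other hand, isolates the intermediate Proposition~\ref{prop:sop-to-order} as a structural fact of independent interest, and its lemma on irreflexive $2$-types is reusable on its own. Note also that, despite invoking full UMP, you only actually use the modeling property with respect to DLO---exactly the same ingredient the paper's lemma uses---so the two arguments draw on the same hypothesis strength.
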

\begin{proof}
For each $0<n<\omega$ and $W\subset S_2^\Delta(T_0)$, let $\theta_W^n(x_0,...,x_{n-1})$ be the formula asserting that $<_W$ is a linear order of the set $\{x_0,...,x_{n-1}\}$, where 
$$x_1<_Wx_2\,\,\iff\,\,\bigvee_{p\in W}p(x_1,x_2)$$
The next claim follows immediately from Proposition \ref{prop:sop-to-order}.

\begin{claim}
Let $\{c_i\}_{i<\omega}$ be a set of new constant symbols, and let $\Psi$ be the following set of sentences in $\L_0$ expanded with these new symbols:
$$T_0^*\cup\{c_i\neq c_j\}_{i<j<\omega}\cup \bigcup_{i_0<\cdots<i_{n-1}<\omega}\bigcup_{W\subset S_2^\Delta(T_0)}\left\{\neg\theta_W^n(c_{i_0},...,c_{i_{n-1}})\right\}$$
If $\Psi$ is consistent, then $T_0$ does not have the Ramsey property.
\end{claim}
Since $T_0$ {\em does} have the Ramsey property, there must be an $N<\omega$  such that for any $N\leq n<\omega$, 
$$T_0\models \forall x_0...x_{n-1}\left(\bigwedge_{i<j<n}x_i\neq x_j\cond \bigvee_{W\subset S_2^\Delta(T_0)}\theta^n_W(\xx)\right)$$
Now, let $\A$ be a countable model of $T_0^*$, and for each $n<\omega$, let $A_n\subset_\fin A$ such that $A_n\subset A_{n+1}$ and $\bigcup_nA_n = A$.
For each $n<\omega$, choose $W_n\subset S_2^\Delta(T_0)$ such that $<_{W_n}$ is a linear order of $A_n$. Then, by the Pigeonhole Principle, there is a $W\subset S_2^\Delta(T_0)$ such that $W_n=W$ for infinitely many $n<\omega$. It follows that $\phi(x_1,x_2) = \bigvee_{p\in W}p(x_1,x_2)$ is a linear order of $A$, so 
$$\A\models \bigwedge\begin{cases}
\forall x_1x_2\left(\phi(x_1,x_2)\cond x_1\neq x_2\wedge\phi(x_2,x_1)\right)\\
\forall x_1x_2\left(\phi(x_1,x_2)\vee x_1=x_2\vee\phi(x_2,x_1)\right)\\
\forall x_1x_2x_3\left(\phi(x_1,x_2)\wedge\phi(x_2,x_3)\cond \phi(x_1,x_3)\right)
\end{cases}$$
Since $T_0^*$ is a complete theory, $T_0^*$ implies the conjunction on the right. Moreover, this conjunction is universal (over $\Delta$), so since every model of $T_0$ is a $\Delta$-submodel of a model of $T_0^*$, $\phi(x_1,x_2)$ is a linear order of the universe of any model of $T_0$.
\end{proof}

\begin{defn}
An ordered theory of indiscernibels is just a theory of indiscernibles $T_0$ in a language $\L_0=\{<,...\}$ such that $<$ defines a linear order of the universe in every model of $T_0$.
\end{defn}

\begin{cor}
Let $T_0$ be a theory of indiscernibles. Then $T_0$ has a $\Delta$-definitional expansion $T_0'$ in a language $\L_0'=\L_0\dot\cup\{<\}$ which is an ordered theory of indiscernibles.
\end{cor}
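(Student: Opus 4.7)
The plan is to apply the preceding theorem to obtain a $\Delta$-formula $\phi(x_1,x_2)$ which defines a linear order of the universe in every model of $T_0$, and then form the $\Delta$-definitional expansion by adjoining $<$ as a new binary symbol with defining axiom $\forall x_1 x_2(x_1<x_2\bic\phi(x_1,x_2))$. Concretely, put $\L_0'=\L_0\dot\cup\{<\}$ and, for each $\A\models T_0$, let $\A'$ be the $\L_0'$-expansion with $<^{\A'}=\phi(\A^2)$; take $T_0'$ to be the $\Pi$-theory (over the base below) of the class $\{\A':\A\models T_0\}$. Let $\Delta'$ be the smallest base in $\L_0'$ containing $\Delta$ together with the atomic formula $x_1<x_2$.

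The key observation is that since $\phi\in\Delta$, syntactically replacing $<$ by $\phi$ sends every $\Delta'$-formula $\delta'(\xx)$ to a $\Delta$-formula $\delta(\xx)$ with $T_0'\models\forall\xx(\delta'(\xx)\bic\delta(\xx))$. This produces canonical bijections between: $\Delta'$-types in models of $T_0'$ and $\Delta$-types in models of $T_0$; finite $\Delta'$-submodels of $\B'$ and (the $\L_0'$-expansions of) finite $\Delta$-submodels of $\B$; and crucially $\hom_{\Delta'}(\A',\B')$ and $\hom_\Delta(\A,\B)$, where a $\Delta$-embedding is automatically a $\Delta'$-embedding (it preserves $\phi$, hence $<$) and vice versa. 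Through these identifications, all four clauses (existence of finite $\Delta'$-submodels, finiteness of $S_n^{\Delta'}(T_0')$, JEP, AP) transfer verbatim from $T_0$ to $T_0'$, so $T_0'$ is a Robinson theory over $\Delta'$.

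For the Ramsey property, suppose $\B'\models(T_0')^*$, $\A'\models T_0'$ is finite, and $h:\hom_{\Delta'}(\A',\B')\to k$ is a coloring. By the correspondence above, the $\L_0$-reduct $\B$ is existentially closed as a model of $T_0$ (since any $\Delta$-extension could be given the expansion interpreting $<$ via $\phi$, producing a $\Delta'$-extension of $\B'$), and $h$ is equally a coloring of $\hom_\Delta(\A,\B)$. Applying the Ramsey property of $T_0$, obtain an $\ell$-embedding $f:\B\qto\B$ so that $h$ is constant on each $\hom_\Delta(\A,f_X[B_X])$; the same system $f$, viewed in $\L_0'$, is an $\ell$-embedding $\B'\qto\B'$ witnessing the Ramsey condition for $h$.

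Thus $T_0'$ is a theory of indiscernibles, and since $<$ is defined by $\phi$ in every model of $T_0'$ and $\phi$ linearly orders the universe of every model of $T_0$, the symbol $<$ defines a linear order in every model of $T_0'$. There is no serious obstacle; the entire argument is bookkeeping about the definitional expansion, and the only point requiring any care is verifying that $\Delta$-embeddings and $\Delta'$-embeddings genuinely coincide under the identification $\A\leftrightarrow\A'$, which is immediate from $\phi\in\Delta$.
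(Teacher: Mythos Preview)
Your proposal is correct and is exactly the argument the paper has in mind; the corollary is stated without proof in the paper, as it is meant to follow immediately from the preceding theorem by forming the definitional expansion with $<$ interpreted by the $\Delta$-formula $\phi(x_1,x_2)$ that was just produced. Your bookkeeping about $\Delta'$-embeddings coinciding with $\Delta$-embeddings (since $\phi\in\Delta$) is precisely the content that makes the transfer of the Robinson-theory axioms and the Ramsey property routine.
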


\subsection{Connections to traditional Ramsey and \fraisse classes}

In this subsection, we complete our demonstration that the ``standard practice'' in structural Ramsey theory of working with \fraisse classes of ordered structures is the ``correct'' way to proceed. More precisely, we will see that in the case where $T_0$ is a theory of indiscernibles whose base $\Delta$ consists of just the quantifier-free formulas, $T_0^*$ \emph{must be} the theory of the \fraisse limit of a \fraisse class of linearly-ordered finite structures.

To conclude this subsection (and this section), we will look a little closer at the kind of linear order that can arise in a theory of indiscernibles -- showing that it must be ``almost dense'' in a certain sense. We then use this characterization to show that our definition of the Ramsey property in terms of $\ell$-embeddings is necessary insofar as a Ramsey property defined in terms of $\Delta$-embeddings between models of $T_0^*$ is not possible.

\subsubsection{Homogeneity and quantifier elimination}

\begin{lemma}
Let $T_0$ be an ordered theory of indiscernibles in a lanague $\L_0=\{<,...\}$, and let $0<n<\omega$ and $\M\models T_0^*$. Suppose $\bb_1,\bb_2\in M^n$ satisfy $\bigwedge_{i<j<n}x_i<x_j$. Then,
$$\tp_\Delta(\bb_1) = \tp_\Delta(\bb_2)\Longrightarrow\begin{cases} 
\tp_\Sigma(\bb_1)=\tp_\Sigma(\bb_2)\\
\tp_\Pi(\bb_1)=\tp_\Pi(\bb_2)
\end{cases}
$$
\end{lemma}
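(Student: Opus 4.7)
The plan is to invoke the unconstrained modeling property, which $T_0$ enjoys by Theorem \ref{thm:main-equivalence} (as a theory of indiscernibles is finitely rigid and has UMP), and combine it with the e.c.\ property of $\M$ to transfer the hypothesized $\Delta$-type equality into equality of $\Sigma$- and $\Pi$-types.

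First I would take $T := Th(\M)$ as a complete theory in the countable language $\L_0$, and form the trivial $T_0$-template $(\M,\Gamma)$ in $\M$ itself, where $\Gamma : M^{<\omega} \to M^{<\omega}$ is the identity map. All three clauses in the definition of a template are immediate for $\Gamma = \mathrm{id}$. Applying UMP, there exist $\M_1 \models T$ (so in particular $\M_1 \models T_0^*$) and an indiscernible $T_0$-picture $(\M,g)$ in $\M_1$ patterned on $(\M,\Gamma)$ via some family of $\ell$-embeddings $(f^F : \M \qto \M)_{F \subset_\fin \L_0}$.

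The crux is to verify that $g : M \to M_1$ is a $\Delta$-embedding that preserves $\Sigma$-formulas in both directions. For any $\phi \in \Delta$ and any tuple $\aa$, taking $F = \{\phi\}$ and $X = \mathrm{rng}(\aa)$, the patterning condition gives
\[
\tp_F^{\M_1}(g\aa) \;=\; \tp_F^\M(\Gamma(f^F_X\aa)) \;=\; \tp_F^\M(f^F_X\aa) \;=\; \tp_F^\M(\aa),
\]
the last equality holding because $f^F_X$ is (locally) a $\Delta$-embedding. Hence $g$ is a $\Delta$-embedding from the existentially closed $\M$ into $\M_1 \models T_0$, so by the very definition of ``e.c.'' it preserves every $\Sigma$-formula. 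Indiscernibility of $(\M,g)$ applied to the hypothesis $\tp_\Delta^\M(\bb_1) = \tp_\Delta^\M(\bb_2)$ then yields $\tp^{\M_1}(g\bb_1) = \tp^{\M_1}(g\bb_2)$, and in particular $\tp_\Sigma^{\M_1}(g\bb_1) = \tp_\Sigma^{\M_1}(g\bb_2)$; pulling back through $g$ gives $\tp_\Sigma^\M(\bb_1) = \tp_\Sigma^\M(\bb_2)$. The $\Pi$-conclusion comes for free: since $\Delta$ is closed under booleans, each $\Pi$-formula is the negation of a $\Sigma$-formula, so $\tp_\Sigma$-equality forces $\tp_\Pi$-equality.

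The main obstacle I anticipate is purely bookkeeping -- threading the definitions of patterning and $\ell$-embedding carefully enough that $g$ genuinely transports $\Sigma$-information in both directions between $\M$ and $\M_1$. The increasing-tuple hypothesis is actually redundant given the rest, since $<$ lies in $\Delta$ and is thus controlled by $\Delta$-type equality; the substantive work is done by UMP (to obtain the indiscernible picture) together with the e.c.\ property of $\M$ (to shuttle $\Sigma$-formulas across $g$).
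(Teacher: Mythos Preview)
Your argument is correct, but it takes a genuinely different route from the paper's.

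The paper argues directly from the Ramsey property, one $\Sigma$-formula at a time: given $\phi\in\Sigma$, it realizes $\bb_1$ and $\bb_2$ as $e_1[\A]$ and $e_2[\A]$ for a single finite model $\A$ (this is where the increasing-tuple hypothesis is used), colors $\hom(\A,\M)$ by the truth value of $\phi$ on the image, applies the Ramsey property to obtain an $\ell$-embedding $f:\M\qto\M$ making the coloring constant, and then uses e.c.\ of $\M$ to extend $f_X$ to a $\Delta$-embedding $\hat f_X:\M\to\M$ and transfer $\phi$ back. So the paper never leaves $\M$ and never invokes Theorem~\ref{thm:main-equivalence}.

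Your approach packages all of that into a single application of UMP: produce an indiscernible picture $(\M,g)$ in some $\M_1\models Th(\M)$, check that $g$ is a $\Delta$-embedding (hence $\Sigma$-preserving by e.c.), and read off the conclusion from indiscernibility. This is conceptually cleaner and handles all $\Sigma$-formulas at once; it also shows, as you observe, that the increasing-tuple hypothesis is merely a normalization. The cost is that you are invoking the equivalence of Theorem~\ref{thm:main-equivalence}, which is a heavier result than the bare Ramsey property the paper uses. Since the lemma appears after that theorem in the paper's logical order, there is no circularity, and your proof stands.
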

\begin{proof}
Let $\M$ and $\bb_1,\bb_2\in M^n$ be as stated, and assume $\tp_\Delta(\bb_1)=\tp_\Delta(\bb_2)$. It is enough to show that for any $\phi(x_0,...,x_{n-1})\in\Sigma$, $\M\models\phi(\bb_1)\iff\M\models\phi(\bb_2)$. Note that $\bb_1 = e_1[A_0]$ and $\bb_2=e_2[A_0]$ for some substructure $A_0$ of a finite $\Delta$-sub-model $\A$ of $T_0$. Without loss of generality, we will assume that $A_0 = \{a_0<\cdots<a_{n-1}\}$ is an initial segment of $A$.  For $\phi(x_0,...,x_{n-1})\in\Sigma$, define a coloring $h_\phi:\hom(\A,\M)\to 2$ by
$$h_\phi(e)=\begin{cases}
1 &\textrm{ if $\M\models\phi(e(a_0),...,e(a_{n-1}))$}\\
0 &\textrm{ if $\M\models\neg\phi(e(a_0),...,e(a_{n-1}))$}
\end{cases}$$
Now, as $T_0$ has the Ramsey property, there is an $\ell$-embedding $f:\M\qto\M$ such that $h_\phi$ is constant on $\hom(\A,f_X[M_X])$ for every $X\subset_\fin M$ -- that is, $h_\phi(fe)=h_\phi(fe')$ for all $e,e'\in\hom(\A,f_X[M_X])$, $X\subset_\fin M$. In particular, $h_\phi(f_Xe_1)=h_\phi(f_Xe_2)$ whenever $\bb_1\bb_2\subseteq X\subset_\fin M$, so 
\begin{align*}
\M\models\phi(f_X\bb_1) &\iff \M\models\phi(f_Xe_1(a_0),...,f_Xe_1(a_{n-1}))\\
&\iff \M\models\phi(f_Xe_2(a_0),...,f_Xe_2(a_{n-1}))\\
&\iff \M\models\phi(f_X\bb_2)
\end{align*}
Since $\M$ is $\Delta$-existentially closed, $f_X:M_X\to\M$ extends to a $\Delta$-embedding $\hat f_X:\M\to\M$. Again using the fact that $\M$ is e.c., we have $\M\models\phi(\hat f_X\bb_i)\iff \M\models\phi(\bb_i)$ ($i{=}1,2$); thus, $\M\models\phi(\bb_1)\iff \M\models\phi(\bb_2)$, as required.
\end{proof}

\begin{cor}
Let $T_0$ be an ordered theory of indiscernibles in a lanague $\L_0=\{<,...\}$. Then $T_0^*$ eliminates quantifiers down to $\Delta$. (Or in older parlance, $\Delta$ is an elimination set for $T_0^*$.) 
\end{cor}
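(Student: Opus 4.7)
The plan is to show that any two tuples in a model of $T_0^*$ having the same complete $\Delta$-type over $\emptyset$ already realize the same complete $\L_0$-type over $\emptyset$. Combined with the finiteness of $S^\Delta_n(T_0)$ (an axiom of Robinson theories) and closure of $\Delta$ under boolean combinations, this will yield the desired elimination: each $\Delta$-complete type in $S^\Delta_n(T_0)$ is isolated by a single $\Delta$-formula, and so an arbitrary $\L_0$-formula $\phi(\xx)$ is $T_0^*$-equivalent to the (finite) disjunction of those isolating $\Delta$-formulas whose associated $\Delta$-type implies $\phi$.

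The core task is therefore to reduce arbitrary tuples to the strictly $<$-increasing, distinct-entries case already handled by the preceding lemma. Since $x<y$ and $x=y$ are atomic, hence in $\Delta$, the $\Delta$-type of a tuple determines both its equality pattern and its $<$-order pattern. So if $\bb_1,\bb_2\in M^n$ satisfy $\tp_\Delta(\bb_1)=\tp_\Delta(\bb_2)$, then a single map selecting the distinct entries together with a single permutation putting those entries in $<$-increasing order works simultaneously for both tuples. Let $\bb_1^\circ,\bb_2^\circ$ denote the resulting strictly $<$-increasing tuples of distinct elements. Closure of $\Delta$ under substitution and permutation of free variables (built into the definition of a base) then guarantees $\tp_\Delta(\bb_1^\circ)=\tp_\Delta(\bb_2^\circ)$.

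Now I invoke the previous lemma on $\bb_1^\circ,\bb_2^\circ$ to obtain $\tp_\Sigma(\bb_1^\circ)=\tp_\Sigma(\bb_2^\circ)$. By Fact 2, every $\L_0$-formula is $T_0^*$-equivalent to a $\Sigma$-formula, so in fact $\tp(\bb_1^\circ)=\tp(\bb_2^\circ)$. Undoing the substitution of free variables that produced the $\bb_i^\circ$ from the $\bb_i$ — i.e.\ translating each test formula $\phi(\xx)$ through the same permutation and coordinate-identification on both sides — yields $\tp(\bb_1)=\tp(\bb_2)$, which completes the reduction. The finiteness-of-$S^\Delta_n(T_0)$ observation above then converts this type-theoretic statement into the syntactic elimination asserted by the corollary.

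The main obstacle is purely bookkeeping in the reduction step: one must verify that the normalization maps (selection of distinct entries, then permutation to $<$-increasing order) are really the \emph{same} on both $\bb_1$ and $\bb_2$ (guaranteed by $<,=\in\Delta$), and that pulling back $\Sigma$-formulas through these substitutions keeps them in $\Sigma$ (guaranteed by the closure axioms on bases, which pass from $\Delta$ to $\Sigma$). Once these points are handled, no new idea beyond the preceding lemma, Fact 2, and the Robinson-theory finiteness axiom is needed.
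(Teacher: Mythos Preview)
Your proof is correct and follows essentially the same approach as the paper: both use the preceding lemma together with the $\Sigma$-elimination of Fact~2 and the finiteness of $S_n^\Delta(T_0)$ to conclude that the $\Delta$-type of a tuple determines its full type, hence that each formula is $T_0^*$-equivalent to a finite disjunction of isolating $\Delta$-formulas. The only difference is packaging: the paper argues directly at the level of a single $\Sigma$-formula $\phi$ and hand-waves the reduction to $<$-increasing tuples (``up to some fiddling with permutations of free variables''), whereas you work at the type level and spell out that normalization carefully.
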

\begin{proof}
Since $T_0^*$ is $\Delta$-model-complete, we already know that $T_0^*$ eliminates quantifiers down to $\Sigma$; thus, to prove the theorem, it is enough to show that for any $\phi(x_0,...,x_{n-1})\in\Sigma$, there is a $\theta_\phi(\xx)\in\Delta$ such that $T_0^*\models\phi(\xx)\bic \theta_\phi(\xx)$.  Let 
$$X_\phi =\left\{p\in S_n^\Delta(T_0):\textrm{$T_0^*\cup p(\xx)\cup\{\phi(\xx)\}$ is consistent}\right\}$$
Let $\theta_\phi(\xx)\in\Delta$ be such that $T_0\models \theta_\phi(\xx)\bic \bigvee_{p\in X_\phi}p(\xx)$, so that $T_0\models\phi(\xx)\cond \theta_\phi(\xx)$. It remains to show that $T_0^*\models\theta_\phi(\xx)\cond\phi(\xx)$.
Towards a contradiction, suppose $T_0^*\cup\{\neg\phi(\bb),\theta_X(\bb)\}$ is consistent -- let $(\M,\bb)$ be a model. Then $\M\models\neg\phi(\bb)$, but $\tp_\Delta(\bb)=p(\xx)\in X_\phi$. By definition of $X_\phi$ and the $\Delta$-model-completeness of $\M$, there is a tuple $\cc\in M^n$ such that $\M\models\phi(\cc)\wedge p(\cc)$. Hence, $\tp_\Delta(\bb) = \tp_\Delta(\cc)$ but $\tp_\Sigma(\bb)\neq\tp_\Sigma(\cc)$, contradicting the previous lemma (up to some fiddling with permutations of free variables). Thus, $T_0^*\models\theta_\phi(\xx)\bic\phi(\xx)$, as desired. 
\end{proof}

The next theorem now follows immediately from the previous corollary and the hypothesis that $S_n^\Delta(T_0)$ is finite for all $n$. 
\begin{thm}
Let $T_0$ be an ordered theory of indiscernibles  in a lanague $\L_0=\{<,...\}$. Then $T_0^*$ is $\aleph_0$-categorical and eliminates quantifiers down to its base $\Delta$. In particular, if the base is the set of quantifier-free formulas, then $T_0^*$ is the theory of the limit of \fraisse class of linearly-ordered finite structures. 
\end{thm}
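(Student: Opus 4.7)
The plan is to peel off the three claims in the order stated. Quantifier elimination down to $\Delta$ is literally the corollary immediately preceding the theorem, so nothing new is needed there.

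For $\aleph_0$-categoricity, I would apply Ryll-Nardzewski. By clause (2) of the Robinson-theory axioms, $S_n^\Delta(T_0)$ is finite for each $n<\omega$. The QE down to $\Delta$ just established says that every $\L_0$-formula is $T_0^*$-equivalent to a $\Delta$-formula; in particular, every complete $n$-type of $T_0^*$ is determined by its restriction to $\Delta$, so the map $S_n(T_0^*)\to S_n^\Delta(T_0)$ sending a complete type to its $\Delta$-part is injective, and $|S_n(T_0^*)|\leq |S_n^\Delta(T_0)|<\aleph_0$. Since $T_0^*$ is complete, in a countable language, and has only finitely many $n$-types for every $n$, Ryll-Nardzewski delivers $\aleph_0$-categoricity.

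For the \fraisse-limit description, assume now that $\Delta$ is the set of quantifier-free $\L_0$-formulas. Then QE down to $\Delta$ is classical quantifier elimination, and the (necessarily unique) countable model $\M$ of $T_0^*$ is ultrahomogeneous: because $\L_0$ is relational every finitely generated substructure of $\M$ is finite, and classical QE together with $\aleph_0$-saturation of the countable model of an $\aleph_0$-categorical theory forces every isomorphism between finite substructures of $\M$ to extend to an automorphism. Hence $\M$ is the \fraisse limit of its age $\mathcal{K}=\mathrm{Age}(\M)$. A brief check using the Robinson-theory axioms identifies $\mathcal{K}$, up to isomorphism, with the class of all finite models of $T_0$: one direction uses that $\M$ is e.c., so every finite $\Delta$-submodel of $\M$ is itself a model of $T_0$; the other uses clause (1) of the Robinson-theory axioms together with the earlier fact that each finite $\A\models T_0$ is coded by a sentence $\phi^\A\in\Sigma$ witnessing the existence of a $\Delta$-embedding $\A\to\M$. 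Since $<\in\L_0$ linearly orders every model of $T_0$, the class $\mathcal{K}$ is a \fraisse class of \emph{linearly-ordered} finite structures, as claimed.

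There is no real obstacle here---the theorem is a packaging of the preceding corollary with the finiteness axiom for $S_n^\Delta(T_0)$ and the standard QE-plus-saturation-implies-homogeneity argument---so the only care required is the final bookkeeping matching $\mathrm{Age}(\M)$ with the class of finite models of $T_0$.
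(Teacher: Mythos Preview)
Your proposal is correct and matches the paper's own argument, which is literally the one sentence ``follows immediately from the previous corollary and the hypothesis that $S_n^\Delta(T_0)$ is finite for all $n$.'' You have simply unpacked that sentence in the expected way (Ryll--Nardzewski for $\aleph_0$-categoricity, then the standard QE-plus-saturation route to ultrahomogeneity). One tiny remark: in your Age-identification, the embedding of an arbitrary finite $\A\models T_0$ into $\M$ really comes from JEP together with existential closedness (so that $\M\models\phi^\A$), not from clause (1) of the Robinson-theory axioms; but this is cosmetic.
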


\subsubsection{The Ramsey Property is as Strong as Possible}

\begin{defn}
Let $T_0$ be a Robinson theory in a language $\L_0=\{<,...\}$ with base $\Delta$. Suppose  $T_0^*$ is $\aleph_0$-categorical, and all models of $T_0^*$ are linearly order by $<$. We say that $T_0^*$ {\em almost-densely linearly ordered} if there is a 0-definable equivalence relation $E(x_1,x_2)$ such that for any model $\M$ of $T_0^*$,
\begin{enumerate}
\item For every $a\in M$, $[a]_E$ is finite. (Thus, the cardinality of $E$-classes is uniformly bounded.)
\item The relation $\leq_E\,=\left\{([a]_E,[b]_E): a\leq b\right\}$ is a dense linear order of $M/E$.
\end{enumerate}
\end{defn}

\begin{lemma}
Suppose $T_0$ is an ordered theory of indiscernibles in $\L_0 = \{<,...\}$ with base $\Delta$. Then $T_0^*$ is almost-densely linearly ordered.
\end{lemma}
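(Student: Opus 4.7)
The plan is to extract a uniform bound on the sizes of ``finite order-intervals'' and use it to define the required equivalence relation. Concretely, I will set $E(x, y)$ to hold iff $x = y$, or (after swapping so that $x < y$) the open order-interval $(x, y)$ contains at most $N$ elements, for a suitable constant $N$. The $E$-classes will then be the maximal blocks of elements lying at finite order-distance from one another, and $<$ will descend to a dense linear order on $M/E$ because non-$E$-related pairs will be forced to have infinitely many elements strictly between them.

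To find $N$, I will use the preceding corollary ($T_0^*$ eliminates quantifiers down to $\Delta$) together with the hypothesis that $S_2^\Delta(T_0)$ is finite. For each $p(x, y) \in S_2^\Delta(T_0)$ with $p \vdash x < y$ and each $n < \omega$, the $\L_0$-formula asserting ``there are exactly $n$ elements $z$ with $x < z < y$'' is equivalent modulo $T_0^*$ to a $\Delta$-formula and so is either in $p$ or inconsistent with $p$. Thus each such $p$ either forces a unique value $n_p < \omega$ of elements strictly between $x$ and $y$, or forces infinitely many; letting $N$ be the maximum of the $n_p$ over the finitely many finite-interval types yields the desired bound.

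Next, I will verify the three clauses in the definition of almost-densely linearly ordered. Reflexivity and symmetry of $E$ are immediate; transitivity uses the dichotomy: if $a < b < c$ with at most $N$ elements between each consecutive pair, then at most $2N + 1$ elements lie strictly between $a$ and $c$, so the actual count must (by the dichotomy) be $\leq N$. A similar ``subinterval is finite'' argument shows that $E$-classes are $<$-convex, which makes $[a]_E <_E [b]_E \iff a < b$ a well-defined linear order on $M/E$ and forces each $E$-class to have at most $N + 2$ elements. For density, suppose $[a]_E <_E [b]_E$ in some model; then $\neg E(a, b)$, so the dichotomy yields infinitely many elements strictly between $a$ and $b$, and since $[a]_E \cup [b]_E$ is finite, we may choose such a $c$ lying in neither class, giving $[a]_E <_E [c]_E <_E [b]_E$.

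The only substantive step is the dichotomy on 2-types, and this is essentially immediate from $\Delta$-quantifier elimination together with the finiteness of $S_2^\Delta(T_0)$—both already established. Everything else is careful bookkeeping with the bound $N$ and the convexity of $E$-classes, so the proof should be short and clean once the dichotomy is in hand.
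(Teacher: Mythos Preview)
Your proposal is correct and follows essentially the same route as the paper: both arguments establish the dichotomy ``either $(x,y)$ contains at most $N$ elements or infinitely many,'' define $E$ accordingly, and verify transitivity, convexity/finiteness of classes, and density in the quotient. The only cosmetic difference is that the paper extracts $N$ by appealing to $\aleph_0$-categoricity of $T_0^*$, while you obtain it from $\Delta$-quantifier elimination together with the finiteness of $S_2^\Delta(T_0)$; these are equivalent facts already in hand, so the distinction is purely one of phrasing.
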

\begin{proof}
For each $0<n<\omega$, let $\phi_n(x_1,x_2)$ be the formula 
$$\forall y_0...y_{n-1},y_n\left(\bigwedge_{i\leq n}x_1{<}y_i{<}x_2\cond \bigvee_{i<n}y_n=y_i\right)$$
asserting that there are at most $n$ distinct elements above $x_1$ and below $x_2$. Also, define $\phi_0(x_1,x_2)$ to be the formula $\forall y\neg (x_1<y<x_2)$. By $\aleph_0$-categoricity, there is an $0<N<\omega$ such that for all $N\leq n<\omega$
$$T_0^*\cup\{\neg\phi_N(x_1,x_2)\}\models \neg\phi_n(x_1,x_2)$$
Define $E(x_1,x_2)$ to be 
$$x_1=x_2\vee\bigvee_{0<n\leq N}\phi_n(x_1,x_2)\vee \bigvee_{0<n\leq N}\phi_n(x_2,x_1)$$
It is clear that $E$ is symmetric and reflexive, so to see that $E$ is an equivalence relation, it is enough to show that $E$ is transitive. For this, let $\M\models T_0^*$, and suppose $a_1,a_2,a_3\in M$ are such that $\M\models E(a_1,a_2)\wedge E(a_2,a_3)$. Without loss of generality, we may assume $a_1<a_2<a_3$. We consider the case that there are $b_0,...,b_{m-1}$, $b'_0,...,b'_{n-1}$ such that 
$$a_1<b_0<\cdots<b_{m-1}<a_2<b'_0<\cdots<b'_{n-1}<a_3$$
and $0<m,n\leq N$ are maximal. By choice of $N$, $m+n+1\leq N$, so $\M\models E(a_1,a_3)$. The cases wherein $a_2$ is the successor of $a_1$ and/or $a_3$ is the successor of $a_2$ may be treated similarly. 

Next, we show that $\leq_E$ is well-defined and a dense linear order. Given $[a]_E,[b]_E\in M/E$, define $a_0 = \min_<[a]_E$ and $a_1 = \max_<[a]_E$, and define $b_0,b_1$ similarly; in particular, $a_0\leq a\leq a_1$ and $b_0\leq b\leq b_1$. Suppose $[a]_E\leq_E[b]_E$ and $[b]_E\leq_E[a]_E$. Without loss of generality, assume $a_1\leq b_1$; then $b_0\leq b\leq a\leq a_1\leq b_1$, so $a\in [b]_E$. Hence $\leq_E$ is well-defined. It is easy to see that $\leq_E$ is a partial order of $M/E$, so we need only show that $<_E$ is the trichotomy and that it is dense. Assume $[a]_E\neq [b]_E$; then $a_0,a_1\notin [b]_E$, so either $a_1<b_0$ or $b_1<a_0$. This suffices for the trichotomy. For density, suppose $[a]_E\,<_E\,[b]_E$, so that $a_1<b_0$. If there is no $c\in (a_1,b_0)$, then $E(a_1,b_0)$, which is impossible. Hence, we may choose any $c\in (a_1,b_0)$ with the guarantee that $[a]_E<_Ec/E<_E[b]_E$.
\end{proof}

\begin{defn}
Let $T_0$ be an ordered theory of indiscernibles with base $\Delta$ in $\L_0=\{<,...\}$, and assume that $T_0$ is almost-dense via $E$. We say that $T_0$ is {\em strongly 1-sorted} if for all $a,b\in M$, $\M\models T_0^*$, if $a,b\notin\acl(\emptyset)$, then 
$$\tp_\Delta([a]_E) = \tp_\Delta([b]_E)$$
\end{defn}

\begin{lemma}
Let $T_0$ be strongly 1-sorted ordered theory of indiscernibles with base $\Delta$ in $\L_0=\{<,...\}$, and assume that $T_0$ is almost-dense via $E$. Suppose $\M\models T_0^*$, and let $A,B\subset_\fin M$ and $c\in M$ be such that $\acl(A)\cap\acl(B)=\acl(\emptyset)$ and $\acl(c)\cap\acl(AB)=\acl(\emptyset)$. Moreover, suppose that for all $a\in \acl(A)$
$$[c]_E<_E[a]_E\,\,\implies a\in\acl(\emptyset).$$
Then,
\begin{enumerate}
\item There is a subset $B'\subset_\fin M$ such that $B'\equiv_AB$ and for all $b'\in \acl(B')$
$$[c]_E<_E[b']_E\,\,\implies b'\in\acl(\emptyset)$$

\item There is a subset $B''\subset_\fin M$ such that $B''\equiv_AB$ and for all $b''\in \acl(B'')$
$$[b'']_E<_E[c]_E\,\,\implies b'\in\acl(\emptyset)$$
\end{enumerate}
\end{lemma}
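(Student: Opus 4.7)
The plan is to realize $B'$ via an $\aleph_0$-saturation argument, exploiting that $T_0^*$ is $\aleph_0$-categorical (so $\M$ may be taken $\aleph_0$-saturated and homogeneous) and that $\leq_E$ is dense on $M/E$. Part (2) is entirely symmetric, so I focus on (1). Enumerate $\bar b = (b_0,\dots,b_{m-1})$ so that $B$ is an initial segment and $\bar b$ enumerates $\acl(B)$; set $I = \{i < m : b_i \notin \acl(\emptyset)\}$, and let $\theta(\bar x, \bar a)$ isolate $\tp(\bar b/A)$ over an enumeration $\bar a$ of $A$ (isolation holds by $\aleph_0$-categoricity over finite sets). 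The goal then reduces to realizing the partial type
$$\pi(\bar x) \;:=\; \bigl\{\theta(\bar x, \bar a)\bigr\} \cup \bigl\{[x_i]_E \leq_E [c]_E : i \in I\bigr\}$$
in $\M$; any witness $\bar b'$ supplies $B'$ as its initial $B$-segment.

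To realize $\pi$, I would build a partial elementary map fixing $A$ pointwise and carrying $\bar b$ into the region below $[c]_E$, then extend via $\aleph_0$-homogeneity. Strong 1-sortedness says every non-algebraic element's $E$-class shares a single $\Delta$-type, so $\tp(\bar b/A)$ is determined by the finite data: (i) algebraic identifications among $\bar b \cup A$; (ii) $E$-class coincidences within $\bar b \cup A$; (iii) the internal $\Delta$-structure of each $E$-class touched by $\bar b$; and (iv) the $<_E$-order among the distinct $E$-classes of $\bar b \cup A$. The hypothesis $\acl(A) \cap \acl(B) = \acl(\emptyset)$ forces (i) to be trivial across $A$ and $\bar b$ (non-algebraic coordinates of $\bar b$ lie outside $\acl(A)$), and for (ii): if a non-algebraic $b_i$ were $E$-equivalent to some $a \in \acl(A)$, then finiteness of $[a]_E$ would force $b_i \in \acl(a) \cap \acl(B) \subseteq \acl(A) \cap \acl(B) = \acl(\emptyset)$, contradicting $i \in I$. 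Hence no nontrivial $E$-coincidence across $A$ and $\bar b$ occurs either.

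The only substantive task is (iv): reproduce the $<_E$-interleaving of the $E$-classes of $\bar b$ (indexed by $I$) with those of $\acl(A) \setminus \acl(\emptyset)$, placed entirely strictly below $[c]_E$. The hypothesis puts $\acl(A) \setminus \acl(\emptyset)$ strictly below $[c]_E$, and density of $<_E$ on $M/E$ supplies as many new $E$-classes as desired strictly below $[c]_E$ in any prescribed order-pattern relative to the finitely many $E$-classes of $\acl(A) \setminus \acl(\emptyset)$; $\aleph_0$-saturation then populates each chosen $E$-class with elements realizing the internal $\Delta$-structure demanded by (iii). This yields the required realization of $\pi$, whence $B'$. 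The main technical obstacle is making rigorous the reduction of $\tp(\bar b/A)$ to the finite data (i)--(iv) --- this rests on strong 1-sortedness together with the quantifier elimination of $T_0^*$ down to $\Delta$ in an essential way.
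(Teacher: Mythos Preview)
Your approach differs substantially from the paper's: you attempt a direct construction of $B'$ by decomposing $\tp(\bar b/A)$ into pieces and realizing it below $[c]_E$ via saturation, whereas the paper argues by contradiction using the Craig interpolation theorem. Assuming no such $B'$ exists, the paper derives that $T_0^* \models p_c(\cc,\aa,\aa_0) \wedge p_B(\bb,\aa,\aa_0) \to c_0 < b_{n-1}$ (with $c_0 = \min\cc$ and $b_{n-1} = \max\bb$), applies interpolation to extract a sentence $\theta(b_{n-1},c_0)$ that factors as $\theta_B(b_{n-1}) \wedge \theta_c(c_0)$ with $\theta_B, \theta_c$ the respective $1$-types, and then invokes strong 1-sortedness to reach the absurdity $T_0^* \models \forall x\,(x \notin \acl(\emptyset) \to x < x)$.

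There is a genuine gap in your argument, and it lies exactly where you flag the ``main technical obstacle.'' The claim that $\tp(\bar b/A)$ is determined by the data (i)--(iv) is false in general, and neither strong 1-sortedness nor quantifier elimination down to $\Delta$ rescues it. Take $T_0^*$ to be the theory of the ordered random graph: there $E$ is equality, so (iii) is vacuous and (iv) records only the linear order of $\bar b$ relative to $A$; nothing in (i)--(iv) sees the edge relation between $\bar b$ and $A$. Two tuples in the same order-position over $A$ but with different adjacencies to $A$ have identical data (i)--(iv) yet distinct types over $A$. More generally, your list omits all $\Delta$-structure \emph{between} distinct $E$-classes other than the $<_E$-order, and strong 1-sortedness speaks only to the $\Delta$-type of a single $E$-class.

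Consequently your final step --- ``$\aleph_0$-saturation then populates each chosen $E$-class with elements realizing the internal $\Delta$-structure demanded by (iii)'' --- begs the question. Having fixed target $E$-classes below $[c]_E$ in the correct $<_E$-pattern, you still need the full $\Delta$-type $\tp_\Delta(\bar b,\bar a)$ to be realizable with the $\bar b$-coordinates placed in those classes, and that consistency is precisely the content of the lemma. The paper's interpolation argument never attempts to build $B'$ explicitly and so avoids this circularity; the passage through $\theta_B(x) \wedge \theta_c(y)$ is where strong 1-sortedness is actually cashed out.
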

\begin{proof}
We expose the proof part 1 of the claim; the proof of part 2 is almost identical. Let $\aa_0\aa$, $\aa_0\bb$, and $\aa_0\cc$ be enumerations of $\acl(A)$, $\acl(B)$, and $\acl(c)$, respectively, where $\aa_0$ is an enumeration of $\acl(\emptyset)$. Hence, $\aa$,$\bb$,$\cc$ are pairwise disjoint as subsets of $M$. Let $p_B(\xx,\aa,\aa_0) = \tp_\Delta(\bb,\aa,\aa_0)$ where $\xx=(x_0,...,x_{m-1})$, and let $p_c(\yy,\aa,\aa_0)=\tp_\Delta(c,\aa,\aa_0)$ where $\yy = (y_0,...,y_{n-1})$. Without loss of generality, we assume $c_0 = \min\cc$ and $b_{n-1} = \max\bb$.
 Taking $\aa,\aa_0,\bb,\cc$ as new constant symbols, the negation of claim the implies,
$$T_0^*\models p_c(\cc,\aa,\aa_0)\wedge p_B(\bb,\aa,\aa_0)\cond c_0<b_{n-1}$$
By the Craig Interpolation Theorem, there is sentence $\theta(b_{n-1},c_0)$ such that 
$$T_0^*\models p_c(\cc,\aa,\aa_0)\wedge p_B(\bb,\aa,\aa_0)\cond\theta(b_{n-1},c_0)$$
$$T_0^*\models \theta(b_{n-1},c_0)\cond  c_0<b_{n-1}$$
By further manipulations from basic logic, we may assume that $\theta(x,y)=\theta_B(x)\wedge\theta_c(y)$ where $\theta_B=\tp_\Delta(b_{n-1})$, $\theta_c=\tp_\Delta(c_0)$. From the assumption that $T_0$ is strongly 1-sorted, $\tp_\Delta([c_0]_E) = \tp_\Delta([b_{n-1}]_E)$, and it follows that $T_0^*\models \forall x(x\notin \acl(\emptyset)\cond x<x)$, which is nonsense. 
 \end{proof}


The proof of the next theorem is an adaptation of a proof of the fact that $\QQ\not\to(\QQ)^2_2$, which is actually a special case of this theorem. The proof of the special case was communicated to us by L. Scow. 
\begin{thm}
Let $T_0$ be a strongly 1-sorted ordered theory of indiscernibles with base $\Delta$ in $\L_0=\{<,...\}$, and assume that $T_0$ is almost-dense via $E\in\Delta$. Let $\M\models T_0$ be a countable model. There, a finite model $\A\prec_\Delta\M$ of $T_0$ and a coloring $h:\hom(\A,\M)\to 2$ such that for every $\Delta$-embedding $f:\M\to\M$, there are $e,e'\in\hom(\A,f\M)$ such that $h(e)\neq h(e')$.
\end{thm}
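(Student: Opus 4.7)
The plan is to adapt Sierpi\'nski's classical argument that $\QQ \not\to (\QQ)^2_2$, with the almost-dense linear order $\leq_E$ on $\M / E$ playing the role of $\QQ$'s dense ordering. Without loss of generality we assume $\M \models T_0^*$; by $\aleph_0$-categoricity of $T_0^*$ the general $\M \models T_0$ case reduces to this. The essential leverage is the corollary on $\Delta$-quantifier elimination: every $\Delta$-embedding $f : \M \to \M$ is automatically elementary, so $f(\M) \preceq \M$ is an $\aleph_0$-homogeneous countable model of $T_0^*$ whose $E$-quotient is a dense linear order with infinitely many classes.

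\textbf{Setup.} Fix a bijection $\mu : \omega \to M$. Using the preceding lemma (with $A = \{a_1\}$, $B = \{a_2\}$, and an auxiliary $c$ chosen to shift $\acl(a_2)$ $E$-far from $\acl(a_1)$), pick $a_1 < a_2$ in $M \setminus \acl^\M(\emptyset)$ with $\neg E(a_1, a_2)$ and $\acl^\M(a_1) \cap \acl^\M(a_2) = \acl^\M(\emptyset)$. Let $\A \prec_\Delta \M$ be a finite $\Delta$-elementary submodel of $\M$ containing $a_1$ and $a_2$ (which exists by the Robinson-theory axiom), and put $P^*(x,y) := \tp_\Delta^\A(a_1, a_2)$. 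Define the coloring
\[
  h : \hom(\A, \M) \to 2, \qquad h(e) = 0 \iff \mu^{-1}(e(a_1)) < \mu^{-1}(e(a_2)).
\]

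\textbf{Main step.} Let $f : \M \to \M$ be any $\Delta$-embedding; then $f(\M) \preceq \M$ is countably infinite and $\aleph_0$-homogeneous, with infinitely many $E$-classes. Since $a_2 \notin \acl(a_1)$, the type $\tp_\Delta(a_2/a_1)$ is non-algebraic, so by $\aleph_0$-homogeneity any $p^* \in f(\M)$ with $\tp_\Delta(p^*) = \tp_\Delta(a_1)$ admits infinitely many $q \in f(\M)$ with $(p^*, q) \models P^*$. At most $\mu^{-1}(p^*) + 1$ of these $q$ have $\mu^{-1}(q) \leq \mu^{-1}(p^*)$, so fix one with $\mu^{-1}(q) > \mu^{-1}(p^*)$ and extend $(a_1, a_2) \mapsto (p^*, q)$ to $e_0 \in \hom(\A, f(\M))$ by homogeneity; then $h(e_0) = 0$. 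Symmetrically, since $a_1 \notin \acl(a_2)$, pick $q^* \in f(\M)$ of type $\tp_\Delta(a_2)$ and $p \in f(\M)$ with $(p, q^*) \models P^*$ and $\mu^{-1}(p) > \mu^{-1}(q^*)$; the extension gives $e_1 \in \hom(\A, f(\M))$ with $h(e_1) = 1$.

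The main obstacle is the setup --- specifically arranging $\acl(a_1) \cap \acl(a_2) = \acl(\emptyset)$, so that both $\tp_\Delta(a_2/a_1)$ and $\tp_\Delta(a_1/a_2)$ are genuinely non-algebraic. Without this independence, algebraic coincidences could force all realizations of the relevant 1-types to live in a small finite set, scuttling the finiteness counting argument of the main step. The preceding lemma supplies such $a_1, a_2$ by shifting (the $\acl$ of) some realization of $\tp_\Delta(a_2)$ far enough from $\acl(a_1)$ in the $E$-order that the $\acl$-intersections collapse to $\acl(\emptyset)$.
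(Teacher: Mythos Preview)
Your argument is in the same Sierpi\'nski family as the paper's, but you take a noticeably different route. The paper passes to the quotient $M_0/E$, fixes an order-isomorphism $u:\QQ\cap(0,1)\to M_0/E$, and colors an embedding $e$ according to whether $u^{-1}([e(a_0)]_E)$ and $u^{-1}([e(a_1)]_E)$ agree or disagree in the rank-based order $<_F$ on $\QQ\cap(0,1)$ (where $\rk(k/n)=n$ for $\gcd(k,n)=1$). The preceding lemma is then used in earnest: it supplies infinitely many embeddings fixing $[e(a_1)]_E$ while pushing $[e(a_0)]_E$ strictly between, so that eventually the finite level set $F_n$ is exhausted and the $<_F$-comparison flips. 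Your version bypasses $E$ and the density of the quotient entirely: you compare $\mu^{-1}(e(a_1))$ and $\mu^{-1}(e(a_2))$ for an arbitrary enumeration $\mu:\omega\to M$, and you replace the lemma by the bare fact that $\tp_\Delta(a_2/a_1)$ and $\tp_\Delta(a_1/a_2)$ are non-algebraic, which gives infinitely many realizations over any fixed coordinate and hence one beyond any given finite initial segment of $\mu$. This is cleaner and uses strictly less of the ambient structure than the paper's argument.

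There is, however, a genuine gap in your setup. You invoke the preceding lemma ``with $A=\{a_1\}$, $B=\{a_2\}$'' in order to arrange $\acl(a_1)\cap\acl(a_2)=\acl(\emptyset)$, but that lemma takes $\acl(A)\cap\acl(B)=\acl(\emptyset)$ as a \emph{hypothesis}, not a conclusion; so the appeal is circular. Fortunately your main step only uses the weaker condition that $a_1\notin\acl(a_2)$ and $a_2\notin\acl(a_1)$, and this follows directly from $\aleph_0$-categoricity: pick any $a\notin\acl(\emptyset)$; since the orbit of $a$ is infinite while $\acl(a)$ is finite, there is some $a'\equiv a$ with $a'\notin\acl(a)$, and then by the automorphism taking $a$ to $a'$ also $a\notin\acl(a')$. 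Take $\{a_1<a_2\}=\{a,a'\}$; the condition $\neg E(a_1,a_2)$ is automatic since $[a_1]_E\subseteq\acl(a_1)$. Replace your setup paragraph with this and the proof goes through.
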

\begin{proof}
Set $M_0 = M\setminus\acl(\emptyset)$, and let $u:\QQ{\cap}(0,1)\to (M_0/E,<)$ be an order-isomorphism. For each $2\leq n<\omega$, define 
$$F_n = \left\{\frac{k}{n}: k\in\{1,2,...,n-1\},\mathrm{gcd}(k,n)=1\right\}$$
Then, $\QQ\cap(0,1) = \bigcup_{2\leq n<\omega}F_n$, and we may define $\rk:\QQ\cap(0,1)\to \omega$ by, $\rk(q) = n\iff q\in F_n$. Finally, define an auxiliary ordering $<_F$ on $\QQ\cap(0,1)$ by 
$$q<_Fr\iff \bigvee\begin{cases}
\rk(q)<\rk(r)\\
(\rk(q)=\rk(r)\wedge q<r)
\end{cases}$$
Let $\A\prec_\Delta\M$ be a finite model of $T_0$ such that $\acl(\emptyset)\subset A$ and there are $a_0,a_2\in A\setminus \acl(\emptyset)$ such that 
\begin{enumerate}
\item $[a_i]_E\subseteq A$ for each $i<2$;
\item $\acl(a_0)\cap\acl(a_i)=\emptyset$;
\item $[a_0]_E$ is the minimum element of $(A\setminus\acl(\emptyset))/E$, and $[a_1]_E$ is the maximum element of $(A\setminus \acl(a_0))/E$.
\end{enumerate}
Now, we define a coloring $h:\hom(\A,\M)\to 2$ by 
$$h(e) = \begin{cases}
0&\textrm{if $u^{-1}\big([e(a_0)]_E\big) <_Fu^{-1}\big([e(a_1)]_E\big)$}\\
1&\textrm{if $u^{-1}\big([e(a_1)]_E\big) <_Fu^{-1}\big([e(a_0)]_E\big)$} 
\end{cases}$$
Without loss of generality, we may assume that $a_0<a_1$ (all that is really important is that $a_0,a_1$ are in fixed positions in the order of $A$).

We will show that there is \emph{no} $\Delta$-embedding $f:\M\to\M$ such that $h$ is constant on $\hom(\A,f\M)$.
Suppose $f:\M\to\M$ is a $\Delta$-embedding. Set  
$$W_\A = \big\{\{q<r\}\in[\QQ]^{n+1}:u(q)u(r)\equiv^\Delta [a_0a_1]_E\big\}$$
Now, we fix $\{q<r\}\in W_\A$ such that $u(q)=[f(a_0)]_E$ and $u(r)=[f(a_1)]_E$. Clearly, there are only two possibilities to consider:
\begin{itemize}
\item \textbf{Case}: $q<_Fr$:

Assume $r\in F_n$ for some $n<\omega$.  By the previous lemma, there are infinitely many $\Delta$-embeddings $e_i:\A\to\M$ ($i<\omega$) such that $[f(a_0)]_E<_E[e_{i+1}(a_0)]_E<_E[e_i(a_0)]<_E [f(a_1)]_E$ and $[e_i(a_1)]_E = [f(a_1)]_E$ for all $i<\omega$. Since $F_n$ is finite, there is an $i<\omega$ such that $u^{-1}\big([e_i(a_0)]_E\big)>r$.

\item \textbf{Case}: $r<_Fq$:

Assume $q\in F_n$ for some $n<\omega$.  By the previous lemma, there are infinitely many $\Delta$-embeddings $e_i:\A\to\M$ ($i<\omega$) such that $[f(a_1)]_E<_E[e_{i}(a_1)]_E<_E[e_{i+1}(a_1)]$ and $[e_i(a_0)]_E = [f(a_0)]_E$ for all $i<\omega$. Since $F_n$ is finite, there is an $i<\omega$ such that $u^{-1}\big([e_i(a_1)]_E\big)>q$
\end{itemize}
In either case, we see that $h$ is not constant on $\hom(\A,f\M)$, as required.
\end{proof}

\section{Extreme amenability}

The connection between Ramsey classes and extremely amenable groups proved in \cite{kpt-2005} is one of the major accomplishments in structural Ramsey theory. In this section, we provide what we hope is a somewhat more accessible proof of that result, using our framework and terminology to streamline some arguments. (Other than these changes, there is really nothing new in this section.)

\begin{defn}
Let $M$ be a countably infinite set. The topology on $\mathrm{Sym}(M)$ -- the group of permutations of $M$ -- has basic open sets,
$$U_{\aa,\bb} = \left\{g\in\mathrm{Sym}(X):g\aa = \bb\right\}$$
where $\aa,\bb\in M^n$, $n<\omega$. A subgroup $G\leq \mathrm{Sym}(M)$ is closed just in case it is a closed set in this topology. 
If $G$ is a closed group and $X$ some other topological space, a \emph{continuous action of $G$ on $X$} is a continuous function $\psi:G\times X\to X$ that also happens to define a group action of $G$ on $X$.

Finally, $G$ is \emph{extremely amenable} if every continuous action $G\times X\to X$ on a compact Hausdorff space $X$ has a fixed point -- an element $x\in X$ such that $g(x)  = x$ for all $g\in G$.
\end{defn}

\begin{prop}
Let $T_0$ be an ordered Robinson theory in $\L_0$ with base $\Delta$ such that $T_0^*$ is $\aleph_0$-categorical. Assume that $\Delta$ is an elimination set for $T_0^*$ and that $T_0$ has finite-rigidity in the sense of Lemma \ref{lemma:finite-rigidity}. Let $\M$ be a countable model of $T_0^*$. If $Aut(\M)$ is extremely amenable, then $T_0^*$ has the Ramsey property. 
\end{prop}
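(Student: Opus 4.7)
The plan is to run the standard Kechris--Pestov--Todor\v{c}evi\'c argument in our setting: feed a colouring into the extreme amenability hypothesis via the natural $G$-action on a space of colourings, extract a $G$-fixed colouring, and then convert the orbit-closure approximations into an $\ell$-embedding. So: fix a finite $\A\models T_0$, an e.c. model $\M\models T_0^*$, and a colouring $h:\hom(\A,\M)\to k$. Let $G=\mathrm{Aut}(\M)$ and form the compact Hausdorff space $X_c := k^{\hom(\A,\M)}$ with the product (Tychonoff) topology. Define a $G$-action on $X_c$ by $(g\cdot c)(e) := c(g^{-1}\circ e)$. A routine check shows this action is continuous: a neighborhood of $(g_0,c_0)$ refining a basic open around $g_0\cdot c_0$ is given by the finitely many equations $g^{-1}e = g_0^{-1}e$ for $e$ in a fixed finite $S$, together with agreement of $c$ with $c_0$ on $g_0^{-1}S$.

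Next, let $Y := \overline{G\cdot h}\subseteq X_c$. This is a nonempty compact $G$-invariant subspace, so by the extreme amenability of $G$ there is a $G$-fixed point $h^*\in Y$. I claim $h^*$ is constant. The hypotheses ($T_0^*$ is $\aleph_0$-categorical, $\Delta$ is an elimination set, $T_0$ is finitely-rigid) imply that the countable e.c. model $\M$ is $\Delta$-ultrahomogeneous in the usual \fraisse sense: any $\Delta$-elementary map between finite $\Delta$-submodels extends to an automorphism of $\M$. Hence, having fixed any enumeration $\aa$ of $A$, all tuples $e(\aa)\in M^{|A|}$ (for $e\in \hom(\A,\M)$) realize the same $\Delta$-type $\tp_\Delta^\A(\aa)$ and therefore lie in a single $G$-orbit; equivalently, $G$ acts transitively on $\hom(\A,\M)$. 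A $G$-invariant function on a transitive $G$-set is constant, so let $c_0<k$ be the constant value of $h^*$.

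Now we manufacture the $\ell$-embedding. For each finite $X\subset_\fin M$, pick (once and for all) a finite $\Delta$-submodel $M_X\preceq_\Delta\M$ with $X\subseteq M_X$; by $\aleph_0$-categoricity and the finiteness of each $S_n^\Delta(T_0)$, we may choose these so that $|M_X|\leq q(|X|)$ for a fixed function $q:\omega\to\omega$. The set $S_X := \hom(\A,M_X)$ is finite, so the basic open $U_X := \{c\in X_c : c|_{S_X}\equiv c_0\}$ is a neighborhood of $h^*$; since $h^*\in Y$, there exists $g_X\in G$ with $g_X\cdot h\in U_X$, i.e.\ $h(g_X^{-1}e) = c_0$ for every $e\in S_X$. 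Set $f_X := g_X^{-1}\!\restriction\! M_X$; this is a $\Delta$-embedding $M_X\to\M$, and postcomposition $e\mapsto g_X^{-1}\circ e$ is a bijection $\hom(\A,M_X)\biject \hom(\A,f_X[M_X])$. Consequently $h\equiv c_0$ on $\hom(\A,f_X[M_X])$. The system $f := (f_X)_{X\subset_\fin M}$ is then the desired $\ell$-embedding $\M\qto\M$ witnessing the Ramsey property for $h$.

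The main obstacle, and the step where the hypotheses are really used, is the claim that $G$ acts transitively on $\hom(\A,\M)$ -- everything else is generic Polish-group/topological-dynamics bookkeeping. Transitivity requires $\Delta$-ultrahomogeneity of $\M$, which is where $\aleph_0$-categoricity together with $\Delta$ being an elimination set (and finite-rigidity, to ensure that $G$-orbits on $\hom(\A,\M)$ coincide with $G$-orbits on tuples rather than on tuples modulo automorphisms of $\A$) pull their weight. After that, the continuity of the action and the bound on $|M_X|$ are routine, and the passage from ``fixed point in the orbit closure'' to ``$\ell$-embedding'' is just unpacking the definition of the product topology.
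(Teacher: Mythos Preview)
Your proof is correct and follows the canonical Kechris--Pestov--Todor\v{c}evi\'c route: act on the compact space $k^{\hom(\A,\M)}$ of all colourings, pass to the orbit closure of $h$, extract a $G$-fixed colouring, and observe it is constant because $G$ acts transitively on $\hom(\A,\M)$; then approximate it by $g_X\cdot h$ on each finite $\hom(\A,M_X)$ to build the $\ell$-embedding.

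The paper takes a genuinely different $G$-flow. It encodes the colouring into an expansion $\M'$ of $\M$ by $n$-ary predicates $P_0,\ldots,P_{k-1}$ and lets $G$ act on a space of $1$-types over $M$ in this expanded language; the fixed type $p$ is then used, together with ultrahomogeneity, to produce the automorphisms $g_X$. What your approach buys is directness: no auxiliary language, no need to argue that a fixed \emph{type} pins down colour values on parameter tuples, and the passage from ``fixed point in the orbit closure'' to ``$\ell$-embedding'' is a one-line unpacking of the product topology. What the paper's approach buys is thematic consistency with its programme of recasting structural Ramsey theory via Stone spaces---the fixed point literally lives in a type space, which fits the ambient model-theoretic narrative.

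One small remark: your invocation of finite-rigidity for transitivity is not actually needed. Once you fix an enumeration $\aa$ of $A$, the map $e\mapsto e(\aa)$ is already a bijection from $\hom(\A,\M)$ onto the set of realisations of $\tp_\Delta^\A(\aa)$, and $\aleph_0$-categoricity together with $\Delta$ being an elimination set gives transitivity of $G$ on that set directly. Rigidity is used in the paper's argument to identify embeddings with unordered images, but your version never relies on that identification.
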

\begin{proof}
 Let $\A\models T_0$ be a finite structure, and suppose $h:\hom(\A,\M)\to k$ is a coloring. By the finite-rigidity assumption, we identify $\A$ with an $n$-tuple $\aa^*\in M^n$. Let $\L_0'$ be the expansion of $\L_0$ with new $n$-ary predicates $P_0,...,P_{k-1}$, and let $\M'$ be the expansion of $\M$ in which 
$$P_i^{\M'} = \left\{\bb\in M^n:\tp_\Delta(\bb)=\tp_\Delta(\aa^*),\, h(\bb)=i\right\}$$
Then $X = \left\{p\in S_x^{\L_0'}(M):\{x\neq m\}_{m\in M}\subset p\right\}$ is a compact Hausdorff space.

\begin{claim}
$Aut(\M)\times X\longrightarrow X:(g,p)\mapsto\left\{\phi(x,g\bb):\phi(x,\bb)\in p\right\}$ is a continuous action.
\end{claim}
\begin{proof}[Proof of claim]
Let $\phi(x,\bb)\in \L'_0(M)$. We need only show that the pre-image of a basic open set $[\phi(x,\bb)]$, 
$$\left\{(g,p): \phi(x,\bb)\in g(p)\right\}=\left\{(g,p): \phi(x,g^{-1}\bb)\in p\right\}$$
is open. Let $g\in Aut(\M)$. The product $U_{g^{-1}\bb,\bb}\times[\phi(x,g^{-1}\bb)]$ is an open subset of $Aut(\M)\times S_1^{\L_0'}(M)$. Now, 
$$\left\{(g,p): \phi(x,g^{-1}\bb)\in p\right\}=\bigcup_{g\in Aut(\M)}U_{g^{-1}\bb,\bb}\times[\phi(x,g^{-1}\bb)]$$
is an open set, as required.
\end{proof}
By extreme amenability, this action has a fixed-point, say $p$. We will use $p$ to construct an $\ell$-embedding $f:\M\qto\M$ such that $h$ is constant on $\hom(\A,f_X[M_X])$ for every $X\subset_\fin M$. 
 Let $X\subset_\fin M$ be given. We take $M_X\prec_\Delta \M$ to be any finite model of $T_0$ such that $X\aa^*\subseteq M_X$ and $\hom(\A,M_X)$ is non-empty. Also, we choose a $b\in M\setminus M_X$ and an automorphism $g_X\in Aut(\M)$ such that $\tp(b/g_X[M_X])\subset p(x)$. 

We claim now that $h$ is constant on $\hom(\A,g_X[M_X])$. Let $\aa_1,\aa_2\in g_X[M_X]^n$ be a realizations of $\tp_\Delta(\aa^*)$. Since $\Delta$ is an elimination set for $T_0^*$ and $T_0^*$ is $\aleph_0$-categorical (so that $\M$ is $\aleph_0$-homogeneous), there is an automorphism $g\in Aut(\M)$ such that $g\aa_1=\aa_2$. Since $p$ is a fixed point -- so that $g(p) = p$ -- it follows that $h(\aa_1) = h(\aa_2)$. Thus, it suffices to define $f_X = g_X\r M_X$.
This completes the proof of the proposition.
\end{proof}

The remainder of this section is really quite close to the presentation in \cite{kpt-2005}. The changes are mainly notational, except that all references to $\mathbb{R}$ have been eliminated, and our framework for the Ramsey property simplifies some parts of these arguments.

\begin{lemma}\label{lemma:top-criterion}
Let $G\subseteq \mathrm{Sym}(M)$ be a closed group, and let $\psi:G\times X\to X$ be a continuous action on a compact Hausdorff space. The following are equivalent:
\begin{enumerate}
\item $\psi$ has a fixed-point.
\item For every finite $G_0\subseteq_\fin G$ and every finite closed cover $\mathcal{K}$ of $X$, there are $x\in X$ and $K\in\mathcal{K}$ such that $\{\psi_g(x):g\in G_0\}\subseteq K$. (Below, we denote this condition by $\maltese_{\psi}$.)
\end{enumerate}
\end{lemma}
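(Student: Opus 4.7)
The forward direction $(1)\Rightarrow(2)$ is immediate: if $x_0$ is a fixed point of $\psi$ and $\mathcal{K}$ is any cover of $X$, choose $K \in \mathcal{K}$ with $x_0 \in K$; then $\{\psi_g(x_0) : g \in G_0\} = \{x_0\} \subseteq K$ for every finite $G_0 \subseteq G$.

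The content is in the reverse direction $(2)\Rightarrow(1)$, and the plan is a standard net/compactness argument. Let $D$ be the set of pairs $(G_0, \mathcal{K})$ with $G_0 \subseteq_\fin G$ and $\mathcal{K}$ a finite closed cover of $X$, directed by $(G_0, \mathcal{K}) \leq (G_0', \mathcal{K}')$ iff $G_0 \subseteq G_0'$ and $\mathcal{K}'$ refines $\mathcal{K}$; directedness is witnessed by $(G_0 \cup G_0', \{K \cap K' : K \in \mathcal{K}, K' \in \mathcal{K}'\})$. Using $\maltese_\psi$, for each $\alpha = (G_0, \mathcal{K}) \in D$ choose $x_\alpha \in X$ and $K_\alpha \in \mathcal{K}$ such that $\psi_g(x_\alpha) \in K_\alpha$ for every $g \in G_0$. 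By compactness of $X$, the net $(x_\alpha)_{\alpha \in D}$ has a convergent subnet with limit $x_* \in X$.

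The claim is that $x_*$ is a fixed point. Suppose for contradiction that $\psi_g(x_*) \neq x_*$ for some $g \in G$. Since $X$ is Hausdorff, pick disjoint open sets $U \ni x_*$ and $V \ni \psi_g(x_*)$; by continuity of $\psi$, shrink to an open $W \subseteq U$ with $x_* \in W$ and $\psi_g(W) \subseteq V$. Then $W \cap \psi_g(W) = \emptyset$, so $\mathcal{K}_0 := \{X \setminus W, X \setminus \psi_g(W)\}$ is a finite closed cover of $X$. Fix the index $\alpha_0 = (\{e,g\}, \mathcal{K}_0)$. For any $\alpha = (G_0', \mathcal{K}') \geq \alpha_0$, the cover $\mathcal{K}'$ refines $\mathcal{K}_0$, so the element $K_\alpha \in \mathcal{K}'$ satisfies $K_\alpha \subseteq X \setminus W$ or $K_\alpha \subseteq X \setminus \psi_g(W)$. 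In the first case $x_\alpha \notin W$; in the second case $\psi_g(x_\alpha) \notin \psi_g(W)$ and injectivity of $\psi_g$ gives $x_\alpha \notin W$. Either way, $x_\alpha \notin W$ cofinally in the subnet, contradicting $x_\alpha \to x_* \in W$.

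The main obstacle is setting up the directed set and the refinement argument correctly; the slightly subtle point is that the ``two-sided'' cover $\mathcal{K}_0$ (using both $X \setminus W$ and $X \setminus \psi_g(W)$) is what lets us extract $x_\alpha \notin W$ regardless of which element of the cover gets chosen, using only that $\psi_g$ is injective. Everything else is routine compactness and continuity.
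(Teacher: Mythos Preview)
Your proof is correct and follows essentially the same strategy as the paper's: both organize the hypothesis by the directed system of pairs $(G_0,\mathcal{K})$, use compactness to extract a candidate fixed point, and then verify it is fixed via a two-element closed cover built from a Hausdorff separation of $x_*$ and $\psi_g(x_*)$.

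The only real difference is cosmetic. The paper defines the closed sets
\[
C(G_0,\mathcal{K})=\bigcup_{K\in\mathcal{K}}\ \bigcap_{g\in G_0}\bigl(K\cap g^{-1}K\bigr),
\]
shows directly that they have the finite intersection property (via unions of the $G_0$'s and common refinements of the $\mathcal{K}$'s, exactly your directedness argument), and takes $x_0$ in their intersection; since $x_0$ itself already lies in every $C(G_0,\mathcal{K})$, the final contradiction with the two-set cover $\{X\setminus U_1,\,X\setminus U_0\}$ is immediate without any shrinking. You instead pass through a convergent subnet of chosen witnesses $x_\alpha$, which forces you to introduce the auxiliary open set $W$ with $W\cap\psi_g(W)=\emptyset$ so that both cases of the refinement yield $x_\alpha\notin W$. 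Both packagings of compactness are standard and equivalent here; the paper's FIP version is marginally more economical, while yours makes the role of continuity of $\psi_g$ a bit more explicit.
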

\begin{proof}
1$\implies$2 is immediate, so we move directly to the proof of 2$\implies$1. Let $\psi:G\times X\to X$ be a continuous action on a compact Hausdorff space. For any finite $G_0\subseteq_\fin G$ and any finite closed cover $\mathcal{K}$ of $X$, we define,
$$C(G_0,\mathcal{K}) = \Big\{x\in X: \exists K\in\mathcal{K}. \{\psi_g(x):g\in G_0\}\subseteq K\Big\}.$$
\begin{claim}
For every finite $G_0\subseteq_\fin G$ and every finite closed cover $\mathcal{K}$ of $X$, $C(G_0,\mathcal{K})$ is closed.
\end{claim}
\begin{proof}[Proof of claim]
Let $g\in G$, and let $\mathcal{K} = \{K_0,...,K_{N-1}\}$ be a closed cover. For each $i<N$, $K_i\cap g^{-1}K_i$ is a closed set. For a finite set $G_0\subseteq_\fin G$, it follows that 
$$C(G_0,\mathcal{K}) = \bigcup_{i<N}\bigcap_{g\in G_0}(K_i\cap g^{-1}K_i)$$
is closed.
\end{proof}
\begin{claim}
$\bigcap_{G_0,\mathcal{K}}C(G_0,\mathcal{K})\neq\emptyset$
\end{claim}
\begin{proof}[Proof of claim]
As $X$ is compact, every $C(G_0,\mathcal{K})$ is compact, so it is enough to show that the intersection $\bigcap_{i<t}C(G_t,\mathcal{K}_t)$ is non-empty, whenever $G_0,...,G_{t-1}\subseteq_\fin G$ and $\mathcal{K}_0,...,\mathcal{K}_{t-1}$ are finite closed covers. Set $H = G_0\cup \cdots\cup G_{t-1}$, and define $\mathcal{K}$ to be the family consisting of the non-empty sets of the form 
$$K_{0,i_0}\cap\cdots\cap K_{t-1,i_{t-1}}$$
where $K_{j,i_j}\in\mathcal{K}_j$ for each $j<N$. By hypothesis, $C(H,\mathcal{K})$ is non-empty, and clearly, $C(H,\mathcal{K})\subseteq\bigcap_{i<t}C(G_t,\mathcal{K}_t)$, as required.
\end{proof}
\noindent Finally, let $x_0\in \bigcap_{G_0,\mathcal{K}}C(G_0,\mathcal{K})$. We claim that $x_0$ is a fixed-point of $G$. Let $g\in G$. If $\psi_g(x_0)\neq x_0$, then as $X$ is Hausdorff, there are open sets $U_0,U_1$ such that $x_0\in U_0$, $\psi_g(x_0)\in U_1$ and $U_0\cap U_1\neq\emptyset$. Let $K_0 = X\setminus U_1$, $K_1 = X\setminus U_0$, and $\mathcal{K} = \{K_0,K_1\}$ -- a finite closed cover of $X$. By definition, $x_0\in C(\{g\},\mathcal{K})$, but this is impossible. Thus, $x_0$ is a fixed-point, as claimed.
\end{proof}

\begin{lemma}\label{lemma:top-criterion-2}
Let $T_0$ be a theory of indiscernibles, and let $\M\models T_0^*$ be a countable model. Then $G = Aut(\M)$ has the following property:
\begin{quote}
($\star$) Suppose $G_0\subseteq_\fin G$, $\A\prec_\Delta\M$ is finite, and $\xi:G/G_A\to k$ is a coloring for some $k<\omega$. Then there is a $g^*\in G$ such that 
$$\xi\left(g^*g_1G_A\right) = \xi\left(g^*g_2G_A\right)$$
for all $g_1,g_2\in G_0$.
\end{quote}
\end{lemma}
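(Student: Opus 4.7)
The plan is to translate the statement into a Ramsey-type problem about colorings of $\hom(\A, \M)$, and then apply the Ramsey property of $T_0$ directly. First observe that the coset space $G/G_A$ is in natural bijection with $\hom(\A, \M)$ via $g G_A \mapsto g\r A$: well-definedness and injectivity are immediate from the definition of $G_A$ as the pointwise stabilizer of $A$, while surjectivity follows from $\aleph_0$-homogeneity of $\M$. Indeed, by the corollary to the previous subsection, $T_0$ has a $\Delta$-definitional expansion to an ordered theory of indiscernibles, whose companion is $\aleph_0$-categorical and eliminates quantifiers down to $\Delta$; pulling this back, $T_0^*$ is $\aleph_0$-categorical and every $\Delta$-embedding between finite $\Delta$-submodels of $\M$ is partial elementary, hence extends to an automorphism of $\M$. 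Under this bijection, the coloring $\xi$ corresponds to a coloring $h: \hom(\A, \M) \to k$.

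Next, apply the Ramsey property of $T_0$ to $h$ to obtain an $\ell$-embedding $f : \M \qto \M$ such that $h$ is constant on $\hom(\A, f_Y[M_Y])$ for every $Y \subset_\fin M$. Set $X = \bigcup_{g \in G_0} gA$, a finite subset of $M$; then $gA \subseteq X \subseteq M_X$ for each $g \in G_0$, and the local map $f_X : M_X \to \M$ is a $\Delta$-embedding of a finite $\Delta$-submodel of $\M$ into $\M$. By the $\aleph_0$-homogeneity argument above, $f_X$ extends to an automorphism $g^* \in G$. For each $g \in G_0$ we then have $(g^* g)\r A = f_X \circ (g \r A) \in \hom(\A, f_X[M_X])$, so
$$\xi(g^* g_1 G_A) \,=\, h((g^* g_1)\r A) \,=\, h((g^* g_2)\r A) \,=\, \xi(g^* g_2 G_A)$$
for all $g_1, g_2 \in G_0$, as required.

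The substantive content lives in the identification $G/G_A \leftrightarrow \hom(\A, \M)$ and in the lifting of the local map $f_X$ to a genuine automorphism of $\M$; once those are in hand, the Ramsey property produces the required $g^*$ essentially for free. The mildly delicate point is that one must first pass through the corollary promoting any theory of indiscernibles to an ordered theory of indiscernibles in order to invoke $\aleph_0$-categoricity -- and hence $\aleph_0$-homogeneity -- of the countable model $\M$.
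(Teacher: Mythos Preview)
Your argument is correct and follows essentially the same route as the paper's proof sketch: identify $G/G_A$ with $\hom(\A,\M)$, collect the finitely many copies $gA$ ($g\in G_0$) into a single finite set $X$, apply the Ramsey property to get an $\ell$-embedding $f$, and read off $g^*$ from $f_X$. The only difference is that you spell out explicitly what the paper leaves as ``translating back through our correspondences yields the desired automorphism $g^*$'' --- namely, the appeal to $\aleph_0$-categoricity and quantifier elimination to lift the local $\Delta$-embedding $f_X$ to a genuine automorphism of $\M$ --- which is a welcome clarification rather than a departure.
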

\begin{proof}[Proof (Sketch)]
Let $\aa$ be an enumeration of $A$ in the $\Delta$-definable ordering of $T_0$. We then arrive at the following bijective correspondences,
$$G/G_A\overset{\sim}{\longleftrightarrow}\aa^G\overset{\sim}{\longleftrightarrow}\hom(\A,\M)$$
in the natural way. Moreover, $G_0$ may be identified with a finite set $\{A_0,...,A_{n-1}\}$ of copies of $\A$; set $X^* = \bigcup_{i<n}A_i$. Finally, the coloring $\xi:G/G_A\to k$ is ``really'' a coloring $\xi:\hom(\A,\M)\to k$. By the Ramsey property of $T_0$, there is an $\ell$-embedding $f:\M\qto\M$ such that $\xi$ is constant on $\hom(\A,f_X[M_X])$ for all $X\subset_\fin M$. In particular, $\xi$ is constant on $\hom(\A,f_{X^*}[M_{X^*}])$, and translating back through our correspondences yields the desired automorphism $g^*\in G$.
\end{proof}

\begin{lemma}\label{lemma:group-criterion}
Let $T_0$ be a Robinson theory, and let $\M\models T_0^*$. If $G = Aut(\M)$ has property ($\star$) as in the previous lemma, then $G$ is extremely amenable.
\end{lemma}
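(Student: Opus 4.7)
The plan is to verify the condition $\maltese_\psi$ of Lemma \ref{lemma:top-criterion} for an arbitrary continuous action $\psi: G \times X \to X$ on a compact Hausdorff space, whence extreme amenability follows. Fix such an action together with a finite set $G_0 = \{g_0,\ldots,g_{n-1}\} \subseteq_\fin G$, a finite closed cover $\mathcal{K} = \{K_0,\ldots,K_{k-1}\}$ of $X$, and an arbitrary basepoint $x_0 \in X$; the goal is to produce $y^* \in X$ and $j^* < k$ with $\psi_{g_i}(y^*) \in K_{j^*}$ for every $i < n$. For each finite $\A \prec_\Delta \M$, consider the coloring
$$\xi_A: G/G_A \to \mathcal{P}(\{0,\ldots,k-1\}) \setminus \{\emptyset\}, \qquad \xi_A(hG_A) = \{j<k : \exists s \in G_A,\ \psi_{s h^{-1}}(x_0) \in K_j\}.$$
This is well-defined on left cosets because for $g \in G_A$ the substitution $s \mapsto s g$ yields $\xi_A(hgG_A) = \xi_A(hG_A)$, and it takes non-empty values because $\mathcal{K}$ is a cover.

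Applying $(\star)$ to the coloring $\xi_A$ and the finite set $G_0^{-1} = \{g_0^{-1},\ldots,g_{n-1}^{-1}\}$ yields $g^*_A \in G$ and a non-empty $S_A \subseteq \{0,\ldots,k-1\}$ with $\xi_A(g^*_A g_i^{-1} G_A) = S_A$ for all $i<n$. Pick $j^*_A \in S_A$; unpacking, for each $i$ there is some $s^A_i \in G_A$ with $\psi_{s^A_i g_i (g^*_A)^{-1}}(x_0) \in K_{j^*_A}$. Writing $y^*_A := \psi_{(g^*_A)^{-1}}(x_0)$ and $z^A_i := \psi_{g_i}(y^*_A)$, the Ramsey conclusion reads $\psi_{s^A_i}(z^A_i) \in K_{j^*_A}$, with the stabilizer element $s^A_i$ appearing on the outside of the expression. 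Direct the net by inclusion of finite $\Delta$-substructures of $\M$; along this net the stabilizers $G_A$ shrink to $\{e\}$ in $G$, since any open neighborhood of $e$ contains a pointwise stabilizer $G_B$ of some finite $B \subseteq M$, and by the first Robinson axiom $B$ sits inside some finite $\A \prec_\Delta \M$. In particular $s^A_i \to e$.

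By compactness of $\{0,\ldots,k-1\} \times X$, pass to a subnet on which $j^*_A = j^*$ is constant and $y^*_A \to y^*$ in $X$. Then $z^A_i = \psi_{g_i}(y^*_A) \to \psi_{g_i}(y^*)$ by continuity of $\psi_{g_i}$, and joint continuity of $\psi$ at $(e, \psi_{g_i}(y^*))$ combined with $s^A_i \to e$ yields $\psi_{s^A_i}(z^A_i) \to \psi_{g_i}(y^*)$. Since each $\psi_{s^A_i}(z^A_i) \in K_{j^*}$ and $K_{j^*}$ is closed, $\psi_{g_i}(y^*) \in K_{j^*}$ for every $i$, so $G_0 \cdot y^* \subseteq K_{j^*}$. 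This verifies $\maltese_\psi$, and Lemma \ref{lemma:top-criterion} delivers the required fixed point. The main obstacle I expect is the bookkeeping with inverses and left/right cosets: $(\star)$ places the free shift $g^*$ on the \emph{left} of the $g_i$, whereas we need a single $y^*$ on which $G_0$ acts on the left. Without the inverse built into $\xi_A$ (and the corresponding application of $(\star)$ to $G_0^{-1}$), the Ramsey output would leave the varying stabilizer element $s^A_i$ buried inside an expression of the form $\psi_{g^*_A}(\psi_{g_i}(\psi_{s^A_i}(x_0)))$, forcing us to control $\psi_{g^*_A}$ at varying arguments even though $g^*_A$ need not converge in $G$.
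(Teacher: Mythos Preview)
Your proof is correct, and it verifies the same criterion $\maltese_\psi$ from Lemma~\ref{lemma:top-criterion} that the paper targets, but the route is genuinely different.

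The paper's sketch works with a \emph{single} finite $\A\prec_\Delta\M$. It fixes $x_0$, builds the open sets $U_K=\{g:\psi_g(x_0)\in X\setminus\bigcup_{K'\neq K}K'\}$ and their $G_A$-saturations $V_K=U_K\cdot G_A$, argues that the $V_K$ cover $G$, and then chooses a coloring $\xi:G/G_A\to\mathcal{K}$ with $\xi^{-1}(K)\subseteq V_K$. A single application of~($\star$) yields a shift $h$ and a color $K_0$ so that $\{\psi_{gh^{-1}}(x_0):g\in G_0\}\subseteq K_0$ directly, with no limiting step.

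Your argument instead runs~($\star$) once for every finite $\A$, using a coarser but automatically well-defined coloring into $\mathcal{P}(k)\setminus\{\emptyset\}$ that records the full $G_A$-orbit. This produces, for each $\A$, a candidate $y^*_A$ that works only up to a stabilizer perturbation $s^A_i\in G_A$; you then pass to a subnet and use joint continuity together with $G_A\downarrow\{e\}$ to kill that error. What you gain is that you never have to verify the covering claim $\bigcup_K V_K=G$ (which in the paper's sketch is asserted as ``clearly'' but actually depends on choosing $\A$ appropriately relative to the cover and the orbit map). What the paper's approach gains is directness: one application of~($\star$), no nets, no appeal to compactness of $X$ beyond what is already packaged in Lemma~\ref{lemma:top-criterion}. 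Your handling of the left/right bookkeeping via the inverse in $\xi_A$ and the application of~($\star$) to $G_0^{-1}$ is exactly right.
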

\begin{proof}[Proof (Sketch)]
 We prove the result for right cosets, but this is sufficient. It is enough to verify that the second part of Lemma \ref{lemma:top-criterion} is satisfied, so let $\psi:G\times X\to X$ be a continuous action on a compact Hausdorff space, $\mathcal{K}$ a closed cover of $X$, and $G_0\subset_\fin G$. Let $\A\prec_\Delta\M$ be a finite model of $T_0$ -- then $G_A$ is an open subgroup of $G$. We fix an $x_0\in X$ arbitrarily, and for each $K\in\mathcal{K}$, we define,
 $$U_K = \left\{g\in G: \psi_g(x_0)\in X\setminus (\cup_{K'\neq K}K')\right\}$$
 an open set, and,
 $$V_K = G_A\cdot U_K = \bigcup_{h\in U_K}hG_A.$$
 Clearly, $\bigcup_{K\in\mathcal{K}}V_K = G/G_A$. We may now choose a coloring $\xi:G/G_A\to \mathcal{K}$ such that $\xi^{-1}(K)\subseteq V_K$ for every $K\in\mathcal{K}$. By property ($\star$), there are $h\in G$ and $K_0\in \mathcal{K}$ such that $\xi(G_Agh^{-1}) = K_0$ for all $g\in G_0$. It is not difficult, then, to verify that 
 $$\left\{\psi_{gh^{-1}}(x_0):g\in G_0\right\}\subseteq K_0$$
 so $x_1 = \psi_{h^{-1}}(x_0)$ suffices.
\end{proof}

\begin{prop}
Let $T_0$ be a theory of indiscernibles, and let $\M\models T_0^*$ be a countable structure. Then $Aut(\M)$ is extremely amenable.
\end{prop}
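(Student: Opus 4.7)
The proof is essentially a one-line combination of the two preceding lemmas, so my plan is to make that combination explicit. Given $T_0$ a theory of indiscernibles and $\M \models T_0^*$ countable, Lemma \ref{lemma:top-criterion-2} directly asserts that $G = Aut(\M)$ satisfies property $(\star)$: any coloring $\xi : G/G_A \to k$, with $\A \prec_\Delta \M$ finite and $G_0 \subseteq_\fin G$, admits some $g^* \in G$ making $\xi$ constant on $g^* G_0 G_A$. Once $(\star)$ is in hand, Lemma \ref{lemma:group-criterion} immediately upgrades this combinatorial property to extreme amenability of $G$.

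So the plan is simply: first invoke Lemma \ref{lemma:top-criterion-2} to conclude that $Aut(\M)$ satisfies $(\star)$, using that $T_0$ is assumed to be a theory of indiscernibles; then invoke Lemma \ref{lemma:group-criterion} to conclude that $Aut(\M)$ is extremely amenable. There is no additional content needed beyond chaining the two implications.

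If one wanted to be a bit more explicit about why no further work is required, one would note that the hypothesis ``$T_0$ is a theory of indiscernibles'' is precisely what Lemma \ref{lemma:top-criterion-2} requires (it is what lets us translate colorings of $\hom(\A,\M)$ into $\ell$-embeddings via the Ramsey property), and that Lemma \ref{lemma:group-criterion} requires only that $\M \models T_0^*$ with $Aut(\M)$ satisfying $(\star)$, not that $T_0$ itself have the Ramsey property at that stage. The countability of $\M$ is used implicitly to ensure $Aut(\M)$ carries the pointwise-convergence topology described in the definition.

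There is no real obstacle here; the conceptual work was done in establishing the topological fixed-point criterion of Lemma \ref{lemma:top-criterion} and in identifying the finite-combinatorial shadow $(\star)$ of extreme amenability. The proof of the proposition is therefore a two-sentence citation: $Aut(\M)$ has $(\star)$ by Lemma \ref{lemma:top-criterion-2}, hence is extremely amenable by Lemma \ref{lemma:group-criterion}.
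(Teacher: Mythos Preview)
Your proposal is correct and matches the paper's own proof exactly: the paper's proof is literally the single line ``Apply Lemmas \ref{lemma:top-criterion-2} and \ref{lemma:group-criterion},'' which is precisely the two-step chaining you describe.
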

\begin{proof}
Apply Lemmas \ref{lemma:top-criterion-2} and \ref{lemma:group-criterion}.
\end{proof}

As a last remark for this section (also observed in \cite{bodirsky-pinsker-2011}), we note that by simultaneously characterizing the Ramsey property both in terms of theories and automorphism groups, the following theorem now follows as an easy consequence of a classical result from \cite{az-1986}.

\begin{thm}
Let $T_0$, $T_1$ be Robinson theories in $\L_0,\L_1$, respectively. If $T_0^*$ and $T_1^*$ are bi-interpretable, then $T_0$ is a theory of indisernibles if and only if $T_1$ is a theory of indiscernibles.
\end{thm}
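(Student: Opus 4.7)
My approach is to combine two equivalences: (i) the classical Ahlbrandt--Ziegler theorem from \cite{az-1986}, which asserts that two $\aleph_0$-categorical theories are bi-interpretable iff the automorphism groups of their countable models are isomorphic as topological groups, and (ii) the characterization of the Ramsey property in terms of extreme amenability of that automorphism group, developed across this section. By symmetry, it suffices to prove that if $T_0$ is a theory of indiscernibles then so is $T_1$.

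First I would observe that if $T_0$ is a theory of indiscernibles then $T_0^*$ is $\aleph_0$-categorical (after the $\Delta$-definitional expansion adding the linear order, if necessary). Since $\aleph_0$-categoricity is preserved under bi-interpretability, $T_1^*$ is also $\aleph_0$-categorical, and Ahlbrandt--Ziegler yields a topological-group isomorphism $\mathrm{Aut}(\M_0) \cong \mathrm{Aut}(\M_1)$, where $\M_i$ denotes the countable model of $T_i^*$. By Lemmas \ref{lemma:top-criterion-2} and \ref{lemma:group-criterion}, $\mathrm{Aut}(\M_0)$ is extremely amenable; extreme amenability is purely a topological-group property and so it transfers across the isomorphism, giving extreme amenability of $\mathrm{Aut}(\M_1)$.

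To close the loop I would apply the first Proposition of this section to $T_1$, whose hypotheses are: $\aleph_0$-categoricity of $T_1^*$ (already in hand), $\Delta_1$ being an elimination set for $T_1^*$, and finite-rigidity of $T_1$. The hard part will be verifying the latter two for $T_1$ directly. Both are really statements about how the orbit structure of $\mathrm{Aut}(\M_1)$ decomposes on finite tuples, so the plan is to push the corresponding facts for $T_0$ (finite-rigidity via Lemma \ref{lemma:finite-rigidity}, and elimination down to $\Delta_0$ via the corollary of the previous subsection) through the bi-interpretation, possibly after replacing $T_1$ by a $\Delta_1$-definitional expansion in the style of the passage from $T_0$ to its ordered form. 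Working out this dictionary carefully -- matching finite $\Delta_1$-sub-models of $\M_1$ with the corresponding $\emptyset$-definable finite configurations produced from $\M_0$ by the bi-interpretation -- is the one nontrivial bookkeeping step; on the level of automorphism groups the rest of the transfer is essentially automatic.
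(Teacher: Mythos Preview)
Your approach is exactly what the paper intends: it offers no detailed proof, only the remark that the theorem ``follows as an easy consequence'' of Ahlbrandt--Ziegler together with the two Propositions of this section equating the Ramsey property with extreme amenability of the automorphism group. You go a step further than the paper by correctly isolating the one point that is not automatic --- verifying for $T_1$ the side hypotheses (ordered, $\Delta_1$ an elimination set, finite-rigidity) needed to invoke the first Proposition in the converse direction --- but the paper itself does not spell out that bookkeeping either.
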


\section{Two Examples}


\subsubsection*{Convex equivalence relations}

It is relatively well-known that the class of ``convex equivalence relations'' forms a Ramsey class. (A member of this class is a finite linear order equipped with an equivalence relation whose classes are convex.) In this subsection, we convex equivalence relations as an example demonstrating that, in principle, the unconstrained modeling property can be used to \emph{prove} a Ramsey property for a class of finite structures, rather than the other way around.
Let $\L_0$ be the language built over $\{E^{(2)},\leq \}$. Let $T_0$ be the following theory:
\begin{enumerate}
\item $\forall xyz\left(E(x,y)\wedge E(y,z)\cond E(x,z)\right)$

$\forall xy\left(E(x,y)\cond E(y,x)\right)$

$\forall x\, E(x,x)$

\item $\forall xyz(x\leq y\wedge y\leq z\cond x\leq z)$

$\forall xy\left(x\leq y\wedge y\leq x\cond x=y\right)$

$\forall x\,x\leq x$

\item $\forall xyz\left(x\leq y\leq z\wedge E(x,z)\cond E(x,y)\right)$
\end{enumerate}
In this case, the base of $T_0$ is just the set of quantifier-free $\L_0$-formulas -- the trivial base. Now, up to isomorphism, we can explicitly describe the countable model $\A$ of $T_0^*$.
Let $\CC\subset\mathbb{R}$ denote the middle-thirds Cantor set -- that is, 
$$\CC = [0,1]\setminus \bigcup_{0<n<\omega}\left(\bigcup_{0\leq k<3^{n-1}}\left( \frac{3k+1}{3^n}, \frac{3k+2}{3^n}  \right)\right)$$
The universe of $\A$ is $\QQ\cap\left([0,1]\setminus \CC\right)$; $<^\A$ is the linear order induced by that of $\mathbb{R}$; and the classes of $E^\A$ are precisely the sets $\QQ\cap \left( \frac{3k+1}{3^n}, \frac{3k+2}{3^n}  \right)$ for all $0<n<\omega$ and $0\leq k<3^{n-1}$.

\begin{prop}
$T_0$ has the unconstrained modeling property.
\end{prop}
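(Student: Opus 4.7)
The strategy is to invoke the Scow-theory construction from the ``only if'' direction of Theorem~\ref{thm:main-equivalence} and verify its finite satisfiability by appealing to classical product Ramsey for $(\QQ,{<})$, independently of the structural Ramsey property for $T_0$ (which is what UMP is meant to deliver, via the ``if'' direction of the same theorem).

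I first record a useful product description of $\A$. The $E$-classes of $\A$ are the sets $\QQ \cap I_{n,k}$, where $I_{n,k}$ ranges over the removed middle-third intervals; these classes form a countable dense linear order under the quotient ordering on $A/E$, and each class is itself a countable dense linear order. Hence $\A \cong (\QQ,{<}) \times_{\mathrm{lex}} (\QQ,{<})$, with $E$ given by equality on the first coordinate. Consequently, the $\Delta$-type of a finite tuple in $\A$ is determined by (i) the quantifier-free order type of its first-coordinate projection in $(\QQ,{<})$ together with (ii), for each cluster of entries sharing a first coordinate, the quantifier-free order type of their second coordinates.

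Given a $T_0$-template $(\A,\Gamma)$ in $\M \models T$, with $T$ complete in a countable $\L$, I form the theory $\mathtt{Scow}(\A,T)$ exactly as in the proof of Theorem~\ref{thm:main-equivalence}; by the same construction, any model of $\mathtt{Scow}(\A,T)$ yields the required indiscernible $T_0$-picture patterned on $(\A,\Gamma)$. So it suffices to prove finite satisfiability of $\mathtt{Scow}(\A,T)$. Unpacking the axioms, the task reduces to the following finitary statement: for each finite $A_0 \subset A$ and each finite $F \subset \L$, produce a $\Delta$-embedding $g_0 \colon A_0 \to M$ such that for any $\aa,\aa' \in A_0^n$ with $\tp_\Delta^\A(\aa) = \tp_\Delta^\A(\aa')$ one has $\tp_F^\M(g_0\aa) = \tp_F^\M(g_0\aa')$, and moreover this common value matches the $F$-type prescribed by $\Gamma$.

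I would verify this finitary statement by a two-stage application of classical Ramsey for $(\QQ,{<})$, exploiting the product decomposition above. In stage one, I color finite subsets of the first-coordinate positions by the $F$-type of their $\Gamma$-image, and apply Ramsey-for-DLO to extract a large first-coordinate subset on which ``between-class'' $F$-types depend only on the first-coordinate order type. In stage two, inside each class that has survived stage one, I apply Ramsey-for-DLO once more to the second-coordinate positions to control ``within-class'' $F$-types. Because each stage invokes only classical Ramsey for $(\QQ,{<})$ -- independent of the paper's machinery -- and because $\A$ itself is generic enough to realize every thinning inside itself, the construction can always be carried out. The main obstacle I anticipate is the interaction between the two stages, since a priori the within-class color of a tuple depends on which stage-one cluster hosts it; this is handled by running stage two uniformly in parallel across all surviving stage-one clusters, invoking classical product Ramsey rather than two separate Ramsey arguments in sequence. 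Once finite satisfiability is established, compactness produces $\MM_1 \models \mathtt{Scow}(\A,T)$, and the interpretation of the function symbol $g$ in $\MM_1$ is the indiscernible $T_0$-picture patterned on $(\A,\Gamma)$ that UMP requires.
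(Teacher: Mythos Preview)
Your proposal is correct and follows essentially the same strategy as the paper's sketch: reduce via compactness to finite satisfiability of $\mathtt{Scow}(\A,T)$, then verify this by iterated applications of classical Ramsey for linear orders, exploiting the two-level (quotient-class / within-class) structure of $\A$. Your lexicographic-product description $\A \cong (\QQ,{<}) \times_{\mathrm{lex}} (\QQ,{<})$ and the product-Ramsey formulation simply make explicit what the paper leaves as ``one just applies the finitary Ramsey theorem for (unadorned) linear orders several times with a sufficiently large finite extension.''
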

\begin{proof}[Proof (Sketch)]
By compactness, it is enough to show that for any $T_0$-template $(\A,\Gamma)$ into a structure $\M$, where $\A$ is the countable model of $T_0^*$, there is an  indiscernible $T_0$-picture $(\A,g)$ in a model $\M_1\succeq\M$ patterned on $(\A,\Gamma)$. Again, by compactness, we just need to show that the set of sentences $\mathtt{Scow}$ of the ``only if'' part of Theorem \label{thm:main-equivalence} is finitely satisfiable, and for this, given the diagram of a finite substructure of $\A$, one just applies the finitary Ramsey theorem for (unadorned) linear orders several times with a sufficiently large finite extension.
\end{proof}

\begin{cor}
Let $K$ be the \fraisse class of finite linear orders equipped with a convex equivalence relation. $K$ is a Ramsey class.
\end{cor}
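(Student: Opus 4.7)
The plan is to derive this corollary as a direct consequence of the preceding proposition together with Theorem~\ref{thm:main-equivalence} and Lemma~\ref{lemma:rp-to-frp}. First I would observe that, up to isomorphism, the finite models of $T_0$ coincide with the members of the \fraisse class $K$: a finite $\L_0$-structure satisfies $T_0$ precisely when $\leq$ is a linear order, $E$ is an equivalence relation, and the $E$-classes are $\leq$-convex. Moreover, since $\Delta$ is the set of quantifier-free $\L_0$-formulas and $\L_0$ is purely relational, $\Delta$-embeddings between finite models of $T_0$ are exactly the usual $K$-embeddings, so $\hom_\Delta(\A,\B)$ matches the morphism set used to phrase ``$K$ is a Ramsey class.''

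Next I would check finite rigidity in the sense of Lemma~\ref{lemma:finite-rigidity}: given a finite $\A\models T_0$, any $e\in\hom(\A,\A)$ must preserve $<$, but a finite linear order admits only the identity as a self-embedding, whence $e=1_A$. Combining finite rigidity with the UMP supplied by the preceding proposition and invoking the ``if'' direction of Theorem~\ref{thm:main-equivalence}, I would conclude that $T_0$ has the Ramsey property in the sense of this paper.

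Finally, applying Lemma~\ref{lemma:rp-to-frp} converts the Ramsey property into the finitary Ramsey property: for every finite $\A,\B\models T_0$ and every $0<k<\omega$ there is a finite $\C\models T_0$ such that any coloring $h:\hom(\A,\C)\to k$ is constant on $\hom(\A,e_0[\B])$ for some $e_0\in\hom(\B,\C)$. Translating through the identification of finite $T_0$-models with $K$, this is exactly the statement that $K$ is a Ramsey class. The substantive content lives in the preceding proposition (which reduces UMP, by compactness, to the classical finitary Ramsey theorem for bare linear orders applied inside each $E$-class of a sufficiently large finite extension), while the general equivalence machinery handles everything else; the only thing requiring care is the bookkeeping matching $\Delta$-embeddings to $K$-embeddings, which is automatic here.
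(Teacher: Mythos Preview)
Your proposal is correct and follows exactly the route the paper intends: the corollary is stated without proof precisely because it is meant to be read off from the preceding proposition via Theorem~\ref{thm:main-equivalence} (UMP plus the trivial finite-rigidity of linearly ordered structures gives the Ramsey property) and then Lemma~\ref{lemma:rp-to-frp} (Ramsey property implies the finitary Ramsey property, which is the usual definition of ``Ramsey class''). Your bookkeeping remarks identifying finite $T_0$-models with $K$ and $\Delta$-embeddings with $K$-embeddings are accurate and are the only points that need to be made explicit.
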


\subsubsection*{Trees}

Let $\L_0$ be the language with two binary predicate symbols $\leq,\,\tpo$, a constant symbol $\mathbf{0}$, and two ternary predicate symbols $R$ and $[\cdot,\cdot\,\|\,\cdot]$. 
Let $T_0$ be the theory that expresses the following:
\begin{enumerate}
\item ``$\tpo$ is a partial order with least element $\mathbf{0}$.''

``$\tpo$ is tree-like'': $\forall x\forall y_1y_2\left[y_1\tpo x\wedge y_2\tpo x\cond(y_1\tpo y_2\vee y_2\tpo y_1)\right]$

\item ``$\leq$ is a linear order''

$\forall xy(x\tpo y\to x\leq y)$

\item $R$ defines the most-recent-common-ancestor relation:


$$\forall xy_1y_2\left(R(x,y_1,y_2)\cond \bigwedge\begin{cases}
x\tpo y_1\wedge x\tpo y_2\wedge \neg(y_1\tpo y_2\vee y_2\tpo y_1)\\
\forall x'(x'\tpo y_1\wedge x'\tpo y_2\cond x'\tpo x)\\
R(x,y_1,y_2)\\
\end{cases}\right)$$
$$\forall xx'y_1y_2\left(R(x,y_1,y_2)\wedge R(x',y_1,y_2) \cond x=x'\right)$$ 
$$\forall xy_1y_1'y_2y_2'\big[R(x,y_1,y_2)\wedge y_1\tpo y_1'\wedge y_2\tpo y_2'\cond R(x,y_1',y_2')\big]$$
$$\forall xy_1y_1'y_2y_2'\big[R(x,y_1,y_2)\wedge \bigwedge_ix\triangleleft y_i'\tpo y_i\cond R(x,y_1',y_2')\big]$$
$$\forall y_1y_2(y_1\neq y_2\cond \exists x\, R(x,y_1,y_2))$$

\item $[x_1,x_2\|x_3]$ describes the arrangement,
$$\xymatrix{
x_1 & x_2 & x_3 \\
y\ar@{-}[u]\ar@{-}[ur] & & \\
y'\ar@{-}[u]\ar@{-}[uurr] & & 
}$$
Formally, this is given by the axioms,
$$\forall x_1x_2x_3\big([x_1,x_2\|x_3]\cond x_1\neq x_2\wedge \forall y,y'\left(R(y,x_1,x_2)\wedge R(y',x_1,x_3)\cond y'\triangleleft\, y\right)\big)$$
$$\forall x_1x_2x_3\big([x_1,x_2\|x_3]\cond [x_2,x_2\|x_3]\big)$$

\item 

$$\forall x_1x_2x_3\big([x_1,x_2\|x_3]\wedge x_1<x_2\cond (x_3<x_1\vee x_2<x_3\big)$$

%
%
%
\end{enumerate}

\begin{obs}
Suppose $\A = (A,\tpo^\A,\leq^\A,R^\A,[\cdot,\cdot\|\cdot],\mathbf{0}^\A)$ is a \emph{finite} model of $T_0$. Then for all $a_1,a_2\in A$, if $\mathbf{0}^\A\neq a_1$ and $a_2$ is the $\leq^\A$-successor of $a_1$, then either $a_1\tpo^\A a_2$ or $a_2$ and $a_1$ are siblings (share a common $\tpo^\A$-predecessor).
\end{obs}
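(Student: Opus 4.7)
The plan is a brief case split on how $a_1$ and $a_2$ sit relative to each other in $\tpo^\A$; only axioms~1 and~2 of $T_0$ enter, and neither the finiteness of $\A$ nor the $R$- and $[\cdot,\cdot\|\cdot]$-axioms will actually be used.

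Since $a_2$ is the $\leq^\A$-successor of $a_1$, we have $a_1<^\A a_2$ strictly. The compatibility axiom $\forall xy\,(x\tpo y\to x\leq y)$ in item~2 then rules out $a_2\tpo^\A a_1$: it would force $a_2\leq^\A a_1$. So either $a_1\tpo^\A a_2$---giving the first disjunct of the conclusion directly---or $a_1,a_2$ are $\tpo^\A$-incomparable. In the incomparable case, I will use the root. Axiom~1 makes $\mathbf{0}^\A$ the $\tpo^\A$-least element; the hypothesis $\mathbf{0}^\A\neq a_1$ upgrades $\mathbf{0}^\A\tpo^\A a_1$ to $\mathbf{0}^\A\triangleleft^\A a_1$, and $\mathbf{0}^\A\leq^\A a_1<^\A a_2$ similarly gives $\mathbf{0}^\A\triangleleft^\A a_2$. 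Thus $\mathbf{0}^\A$ is a common $\tpo^\A$-predecessor of $a_1$ and $a_2$, witnessing the second disjunct.

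The main conceptual point---and the place where the argument could be mistaken for something deeper than it is---is that the parenthetical ``share a common $\tpo^\A$-predecessor'' is a weak condition which the root $\mathbf{0}^\A$ satisfies for free once $a_1,a_2$ are $\tpo^\A$-incomparable. On the stricter reading in which ``siblings'' would mean sharing the same \emph{immediate} $\tpo^\A$-predecessor, one would instead have to engage axiom~5 on a triple consisting of $a_1,a_2$ and an element strictly $\tpo^\A$-between their meet $R^\A(\cdot,a_1,a_2)$ and one of $a_1,a_2$, arguing that $\leq^\A$-consecutivity of $a_1,a_2$ precludes any such intermediate element; this is where the main obstacle would live under that reading, but the observation as literally stated does not require it.
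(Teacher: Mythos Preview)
The paper records this statement as an observation without proof, so there is no argument of the paper's to compare against. Your proof of the claim as literally stated is correct: once $a_1,a_2$ are $\tpo^\A$-incomparable, the root $\mathbf{0}^\A$ is a common strict $\tpo^\A$-predecessor of both, which is all the parenthetical asks for. (Incidentally, you never reach the incomparable case when $a_1=\mathbf{0}^\A$, since then $a_1\tpo^\A a_2$ automatically; so the hypothesis $a_1\neq\mathbf{0}^\A$ is not actually doing work.)

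Your closing remark about the stricter reading deserves one correction, however. The stronger statement---that $\leq$-consecutive incomparable elements must share the same \emph{immediate} $\tpo$-predecessor---is in fact \emph{false} for $T_0$ as axiomatized, so the route via axiom~5 that you sketch cannot succeed. A small witness: take the tree with root $\mathbf{0}$, children $a,b$ of $\mathbf{0}$, and children $c,d$ of $a$, equipped with the linear order $\mathbf{0}<a<c<d<b$. The only instances of the betweenness predicate here are $[c,d\,\|\,b]$ and $[d,c\,\|\,b]$, and axiom~5 merely forbids $b$ from lying strictly between $c$ and $d$ in $<$, which is respected. All the other axioms are easily checked. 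Now $b$ is the $\leq$-successor of $d$, yet the immediate $\tpo$-predecessor of $d$ is $a$ while that of $b$ is $\mathbf{0}$. So only the weak reading survives, and on that reading your argument is exactly right.
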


It isn't hard to verify that the class of finite models of $T_0$ is actually a \fraisse class. In fact, one can fairly easily construct the countable finitely-universal model of $T_0$ explicitly as follows:
\begin{enumerate}
\item Given a model $\A$ of $T_0$, let $\A'$ be the model of $T_0$ obtained by ``gluing'' a copy of ${^{<\omega}}\omega$ to a new node $x$ between $\sigma$ and $\sigma\widehat{\,\,}n$ for all $\sigma\in {^{<\omega}}\omega$ and $n<\omega$. 
\item Let $\A_0$ be ${^{<\omega}}\omega$ itself as a model of $T_0$, and for all $n<\omega$, let $\A_{n+1} = \A_n'$.
\end{enumerate}
 Then, $\A = \bigcup_n\A_n$ is the \fraisse limit of the finite models, and $T^*_0 = Th(\A)$ is model-companion of $T_0$. By a theorem of \cite{milliken-1979} -- reproved in \cite{bodirsky-piguet-2010} -- the $T_0$ has the finitary Ramsey property, so $T_0$ is a theory of indiscernibles.

\bibliographystyle{plain}
\bibliography{myref}

\end{document}